\newcommand{\vc}[1]{\boldsymbol{#1}}
\newcommand{\ind}{\mathds{1}}
\newcommand{\inner}[2]{\langle #1, #2\rangle}
\newcommand{\conj}[1]{\overline{#1}\,}
\newcommand{\R}{\mathbb{R}}
\newcommand{\Z}{\mathbb{Z}}
\DeclarePairedDelimiter\abs{\lvert}{\rvert}
\DeclarePairedDelimiter\norm{\lVert}{\rVert}
\DeclarePairedDelimiter\japan{\langle}{\rangle}
\DeclarePairedDelimiter\floor{\lfloor}{\rfloor}
\DeclareMathOperator{\supp}{supp}
\DeclareMathOperator{\real}{Re}
\DeclareMathOperator{\sgn}{sgn}
\DeclareMathOperator{\BigO}{\mathcal{O}}
\newcommand{\Mod}[1]{(\textrm{mod}\; #1)}
\theoremstyle{plain}
  \newtheorem{theorem}{Theorem}
  \newtheorem{corollary}[theorem]{Corollary}
  \newtheorem{lemma}[theorem]{Lemma}
\theoremstyle{definition}  
  \newtheorem{definition}[theorem]{Definition}
\newenvironment{customthm}[1]
  {\innercustomthm}
  {\endinnercustomthm}
\title{Static and Dynamical, Fractional Uncertainty Principles}
\author[1]{Kumar, S.\thanks{skumar@bcamath.org}}
\author[1]{Ponce-Vanegas, F.\thanks{fponce@bcamath.org}}
\author[1,2]{Vega, L.\thanks{lvega@bcamath.org}}
\affil[1]{BCAM - Basque Center for Applied Mathematics}
\affil[2]{Universidad del País Vasco UPV/EHU}
\date{}
\begin{document}
\maketitle

\begin{abstract}
We study the process of dispersion of low-regularity 
solutions to the Schrödinger equation using fractional 
weights (observables). 
We give another proof of the uncertainty principle for fractional weights and use it to get a lower bound for the concentration of mass. 
We consider also the evolution when the initial datum is the Dirac comb in $\R$. 
In this case we find fluctuations that 
concentrate at rational times and that resemble a realization of a Lévy process. 
Furthermore, the evolution exhibits multifractality.
\end{abstract}


\section{Introduction}

This work grew out of the interest in 
understanding the process of dispersion of solutions 
to the Schr\"odinger equation with initial data
with low regularity. 
By Schr\"odinger equation we mean the following initial value problem:
\begin{equation*}
\begin{cases}
\partial_t u = \frac{i}{2}\hbar\Delta u \\
u(x,0) = f(x),
\end{cases}
\end{equation*}
where $\hbar := 1/(2\pi)$. 

We measure regularity using the space 
\begin{equation}\label{eq:def:Sigma_delta}
\Sigma_\delta(\R^n) := \{f\in L^2(\R^n)\mid \norm{f}_{\Sigma_\delta}^2 := \norm{\abs{x}^\delta f}^2_2+\norm{D^\delta f}_ 2^2 < \infty\},
\end{equation}
where $D^\delta f:= |\xi|^{\delta} \hat f(\xi)$ and
\begin{equation*}
\hat f(\xi):=\int_{\R^n} e^{-2\pi ix\xi}f(x)\,dx.
\end{equation*}
We will consider $0<\delta\leq1$, and refer to solutions with $u(x,0)\in \Sigma_\delta(\R^n)$, for $0<\delta<1$, as low-regularity solutions. 

Similarly, we measure the dispersion of a solution $u$ with the functional  
\begin{equation}\label{eq:def_h_delta}
h_\delta[f](t) := \int \abs{x}^{2\delta}\abs{u(x,t)}^2\,dx;
\end{equation}
for simplicity, we may write $h_\delta(t)$. Nahas and Ponce studied this functional during their work on persistence properties of decay and regularity in the non-linear setting \cite{zbMATH05651277}. As a consequence of Lemma 2 in \cite{zbMATH05651277} we have
\begin{equation}\label{eq:Nahas-Ponce}
h_\delta[f](t)\leq C_\delta \norm{f}_{\Sigma_\delta(\R^n)}^2(1+t^2)^{\delta},
\end{equation}
where $f$ is the initial datum, so the functional \eqref{eq:def_h_delta} makes sense for every time.
Another proof of this persistence property is given in \cite{agirrePhD}, where the motivation is to give sufficient conditions for uniqueness of linear and non-linear Schr\"odinger equations following the ideas in \cite{zbMATH06124309}.

From another point of view, $h_\delta[f](t)$ is the evolution 
of the average value of a quantum observable 
and the corresponding quantity for a classical particle in free-motion is $h_\delta^{\textrm{c}}[x_0,p_0](t) := \abs{x_0+p_0t}^{2\delta}$, 
where $x_0$ and $p_0$ are the initial position and momentum, respectively. 
It is interesting to compare the quantum and classical behavior; for example, 
after computing $h_1''$ or by using the identity $e^{i\pi t\abs{\xi}^2}(i\hbar\partial)e^{-i\pi t\abs{\xi}^2} = i\hbar\partial + t\xi$ we can see that
\begin{equation*}
h_1[f](t) = \japan{(x_0 + p_0t)^2} := \int f(x)\conj{(x-it\hbar \partial)^2 f}\,dx,
\end{equation*}
where $x_0 = x$ and $p_0 = -i\hbar\partial$ are the initial (in the Heisenberg picture) position and momentum operators, respectively. Does this simple and smooth behavior hold equally when $0<\delta<1$?

The classical Heisenberg's Uncertainty Principle asserts that
\begin{equation}\label{eq:UP_2}
\Big[\int\abs{x}^2\abs{f(x)}^2\,dx\,\int\abs{\xi}^2\abs{\hat f(\xi)}^2\,dx\Big]^\frac{1}{2} \ge \frac{n}{4\pi}\norm{f}_{L^2(\R^n)}^2.
\end{equation}
Using translations in physical space and in phase space (i.e. Galilean transformations) it is always possible to assume that $\int x\abs{f(x)}^2\,dx = \int\xi\abs{\hat f(\xi)}^2\,d\xi = 0$, and 
\eqref{eq:UP_2} is then a measure of the concentration of $\abs{f}$ and $\abs{\hat f}$ around the origin. Finally, using translations in time and dilations  we can also assume that $\japan{x_0p_0 + p_0x_0} = 0$ and that $a^2:=\japan{ x_0^2}= \japan{ p_0^2}$, 
so in that case $h_1[f](t) = a^2(1+t^2)$. Hence, using \eqref{eq:UP_2} we conclude that if $\norm{f}_{L^2(\R^n)}=1$ then
\begin{equation}\label{eq:LB_2}
h_1[f](t) \geq \frac{n}{4\pi}(1+t^2),
\end{equation}
and the identity holds if and only if $f=cf_G(x):= c2^{n/4}e^{-\pi |x|^2}$, where $\abs{c} = 1$. In fact, in that case the corresponding solution is explicitly given by $u_G=2^{n/4}(1+it)^{-n/2}e^{-\pi|x|^2/(1+it)}$, so 
\begin{equation}\label{eq:f_G}h_\delta[f_G](t)=h_\delta[f_G](0)(1+t^2)^\delta.
\end{equation} 

The above argument suggests that a lower bound of 
$h_\delta[f](t)$ might be proved by means of a generalization of the uncertainty principle \eqref{eq:UP_2} with weights $\abs{x}^{2\delta}$ and $\abs{\xi}^{2\delta}$, for $0<\delta<1$. 
As it is well known, the uncertainty principle has been already extended in several directions, see \textit{e.g.} \cite{Cowling1984,zbMATH00788512,zbMATH01038862,zbMATH01921129, zbMATH05237721, Steinerberger2020}, and 
the ``fractional uncertainty principle'' we are interested in was proved by Hirschman in \cite{zbMATH03130602}. One of the results in this paper is another proof of this fact.

\begin{theorem}[Static, Fractional Uncertainty Principle]\label{thm:Static_UP}
There exists a constant $a_\delta>0$, for $0<\delta<1$, such that
\begin{equation}\label{eq:sharp_C_UP}
\inf_{\norm{f}_2 = 1}\norm{\abs{x}^\delta f}_{L^2(\R^n)}\norm{D^\delta f}_{L^2(\R^n)} = a_\delta^2.
\end{equation}
Equality is attained and a minimizer $Q_\delta$ can be chosen strictly positive and satisfying $\norm{\abs{x}^\delta Q_\delta}_2 = \norm{D^\delta Q_\delta}_2$. Any other minimizer $f$ is of the form $f(x) = c\lambda^{n/2}Q_\delta(\lambda x)$ for some $\lambda>0$ and $\abs{c} = 1$. Furthermore, $Q_\delta(x)\simeq \abs{x}^{-n-4\delta}$ for $\abs{x}\gg 1$.
\end{theorem}

The decay result is direct consequence of the work of Kaleta and Kulczycki \cite{zbMATH05806832}. 
Observe that the minimizer of the fractional uncertainty principle does not decay exponentially.

As a consequence of the above theorem we can easily obtain a lower bound for $h_\delta[f](t)$ as stated in our next theorem.

\begin{theorem}[Dynamical, Fractional Uncertainty Principle]\label{thm:Dynamical_UP}
If $f\in\Sigma_\delta(\R^n)$, for $0<\delta<1$, and $\norm{f}_2 = 1$, then
\begin{equation*}
h_\delta[f](t) \ge \Big(\frac{a_\delta^2}{\norm{\abs{x}^\delta f}_2\norm{D^\delta f}_2}\Big)^2\max\Big(\norm{\abs{x}^\delta f}^2_2,\norm{D^\delta f}^2_2\abs{t}^{2\delta}\Big),
\end{equation*} 
where $a_\delta$ is the constant in \eqref{eq:sharp_C_UP}. Furthermore, for any $T \neq 0$
\begin{equation*}
h_\delta[f](0)h_\delta[f](T)\ge a_\delta^4\abs{T}^{2\delta},
\end{equation*} 
with equality if and only if
\begin{equation*}
f(x) = ce^{-\pi i\abs{x}^2/T}\lambda^{n/2}Q_\delta(\lambda x)
\end{equation*} 
for some $\lambda>0$ and $\abs{c} = 1$.
\end{theorem}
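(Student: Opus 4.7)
\noindent\emph{Proof plan.}\quad My plan is to deduce both assertions from the static principle (Theorem~\ref{thm:Static_UP}) applied in two different ``pictures'' of the Schr\"odinger evolution. Since the propagator acts on the Fourier side as the modulus-one multiplier $e^{-i\pi t\abs{\xi}^2}$, the norm $\norm{D^\delta u(\cdot,t)}_2$ is conserved in $t$ and equals $\norm{D^\delta f}_2$; applying~\eqref{eq:UP_delta} to the slice $u(\cdot,t)$ therefore gives
\begin{equation*}
h_\delta[f](t)\,\norm{D^\delta f}_2^2 \,\ge\, a_\delta^2,
\end{equation*}
which is the $\norm{\abs{x}^\delta f}_2^2$-branch of the claimed maximum. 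For the time-weighted branch I would use the chirp representation of the free propagator, obtained by completing the square in its kernel $(it)^{-n/2}e^{i\pi\abs{x-y}^2/t}$:
\begin{equation*}
u(x,t) \,=\, (it)^{-n/2}e^{i\pi\abs{x}^2/t}\,\mathcal{F}[g_t](x/t),\qquad g_t(y):=e^{i\pi\abs{y}^2/t}f(y).
\end{equation*}
Since $\abs{g_t}=\abs{f}$, we have $\norm{g_t}_2=1$ and $\norm{\abs{x}^\delta g_t}_2=\norm{\abs{x}^\delta f}_2$; a change of variables $y=x/t$ in $\int\abs{x}^{2\delta}\abs{u(x,t)}^2\,dx$ then yields the key identity
\begin{equation*}
h_\delta[f](t) \,=\, \abs{t}^{2\delta}\,\norm{D^\delta g_t}_2^2.
\end{equation*}
Applying~\eqref{eq:UP_delta} to $g_t$ bounds $\norm{D^\delta g_t}_2$ from below by $a_\delta/\norm{\abs{x}^\delta f}_2$, giving $h_\delta[f](t)\ge a_\delta^2\abs{t}^{2\delta}/\norm{\abs{x}^\delta f}_2^2$, which is the second branch. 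Taking the maximum of the two bounds and rearranging (using the static bound $\norm{\abs{x}^\delta f}_2\norm{D^\delta f}_2\ge a_\delta$ to match the form in the statement) produces the first inequality.

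The product inequality then falls out of the same chirp identity. Specializing to $t=T\neq 0$, the relations $h_\delta[f](0)=\norm{\abs{x}^\delta g_T}_2^2$ and $h_\delta[f](T)=\abs{T}^{2\delta}\norm{D^\delta g_T}_2^2$ give
\begin{equation*}
h_\delta[f](0)\,h_\delta[f](T) \,=\, \abs{T}^{2\delta}\bigl(\norm{\abs{x}^\delta g_T}_2\norm{D^\delta g_T}_2\bigr)^2,
\end{equation*}
and Theorem~\ref{thm:Static_UP} applied to $g_T$ (which has unit $L^2$-norm) bounds the parenthesized product from below by $a_\delta$, yielding the claimed lower bound. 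For the equality case, I would observe that equality forces $g_T$ to saturate~\eqref{eq:UP_delta}. By the uniqueness part of Theorem~\ref{thm:Static_UP}, together with the $L^2$-preserving dilation $f\mapsto\lambda^{n/2}f(\lambda\,\cdot)$ and multiplication by a unimodular constant---both of which preserve the two sides of~\eqref{eq:UP_delta}---the full minimizing family is $\{c\lambda^{n/2}Q_\delta(\lambda\,\cdot):\abs{c}=1,\ \lambda>0\}$. Hence equality forces $g_T(x)=c\lambda^{n/2}Q_\delta(\lambda x)$, and undoing the chirp via $f(x)=e^{-i\pi\abs{x}^2/T}g_T(x)$ produces the stated extremal form.

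The only genuinely non-algebraic step is the chirp representation together with the change of variables converting $h_\delta[f](t)$ into a homogeneous Sobolev norm of a modulated copy of $f$; this is where I would invest the most care. Everything else reduces to two applications of the static principle and a symmetry analysis of its minimizers.
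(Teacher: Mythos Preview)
Your approach is correct and essentially identical to the paper's own proof: apply the static principle to $u(\cdot,t)$ (using conservation of $\norm{D^\delta u(\cdot,t)}_2$) for one branch, and to the chirped function $g_t=e^{i\pi\abs{\cdot}^2/t}f$ via the pseudo-conformal representation $u(x,t)=(it)^{-n/2}e^{i\pi\abs{x}^2/t}\widehat{g_t}(x/t)$ for the other branch and for the two-times product inequality together with its equality case. The only bookkeeping to watch is that the sharp constant defined in~\eqref{eq:sharp_C_UP} is $a_\delta^2$ (so $\norm{\abs{x}^\delta g}_2\norm{D^\delta g}_2\ge a_\delta^2$ when $\norm{g}_2=1$), which is what produces the powers $a_\delta^4$ appearing in the statement.
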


One could wonder up to what extent the behavior exhibited by the gaussian in \eqref{eq:f_G} is generic for $h_\delta[f](t)$. One of the main purposes of this paper is to start to explore the answer to this question. 

In Section~\ref{sec:dynamical} we study the regularity of $h_\delta[f](t)$. Theorem~\ref{thm:h_away_zero} is one of our main tools, and in this theorem we compute the Fourier transform of $h_\delta[f](t)$ away from zero. Since the weight is $\abs{x}^{2\delta}$, one may expect that $h_\delta$ has Lipschitz regularity $2\delta$ and that it is best possible in general; in Theorem~\ref{thm:Lipschitz} we confirm this when $n\ge 2$. However, when $n = 1$ and $\delta > 1/2$ the Lipschitz regularity drops to $\frac{1}{4} + \frac{3}{2}\delta$.

In Theorem~\ref{thm:h_decay} we supplement our results about regularity with the analysis of the decay of $\hat{h}_\delta$ . In dimension $n\ge 4$ we prove that $\abs{\hat{h}_\delta(\tau)}\le \tau^{-1-2\delta}$, which is consistent with the Lipschitz regularity. However, when $n\le 3$ the Fourier transform decays more slowly. 

From the analysis of the Fourier transform of $h_\delta$ one can guess that the so called Talbot effect can generate plenty of fluctuations from the generic behavior $(1+t^2)^\delta$, so we turn our attention to it. 

In the torus, the fundamental solution $u_D$ of the Schrödinger equation (when the initial datum is $\delta_0$) has a very peculiar behavior which influence the evolution of any other function. The Talbot effect might be described as the emergence, at rational times, of equally spaced Dirac deltas in the fundamental solution, each Dirac delta being multiplied by a complex weight depending on the Gauss sums. In $\R^n$ we can think of $u_D$ as the solution when the initial datum is the Dirac comb.

The Talbot effect was discovered by Henry Talbot when experimenting with ray lights crossing a grating with equidistant slits \cite{Talbot1836}. The effect was theoretically investigated in several papers by Berry and collaborators, \textit{e.g.} \cite{MR602111,MR1410175,MR1421017}, who discovered various interesting properties.

In a deep connection, the Talbot effect appears during the evolution of a polygonal vortex filament in a fluid through the binormal flow $X_t = \kappa \vc{b}$. Here $X$ is an arc-length parametrization of the filament, $\kappa$ is the curvature and $\vc{b}$ is the binormal vector. In the case of a polygonal filament, the curvature is zero except for the corners where $\kappa$ has Dirac deltas, so $\kappa$ may be seen as a weighted Dirac comb, the weights depending on the angle of the corner. In \cite{MR3363420} Hasimoto discovered a transformation that carries a solution of the binormal flow (curvature and torsion) to a solution of the 1D cubic non-linear Schr\"odinger equation (NLS), so we can use a solution of the NLS with the Dirac comb as initial datum to study the evolution of a polygonal vortex filament. We also notice that a version of the Talbot effect was observed for the NLS numerically \cite{MR3094557}, and then supported theoretically \cite{MR3910065}; see also  Ch. 5.3 of \cite{zbMATH06595353} or \cite{MR3228622, MR3342097} for further information.

In \cite{MR3291141}  the Talbot effect is exploited to show that a polygonal vortex filament evolves into a polygon at rational times as long as uniqueness holds for the binormal flow. They found compelling numerical evidence that $\int_0^t u_D(0,s)\,ds$ approximates the trajectory of a corner in a vortex filament, which has found more theoretical support recently \cite{Banica2020}. Furthermore, the dynamics displays the same conspicuous fractal nature described by Berry and collaborators with respect to the Talbot effect; see also \cite{MR1752508}.

Inspired by ideas in theory of turbulence \cite{MR1428905}, one may wonder whether $\int_0^t u_D(0,s)\,ds$ can be seen as the outcome of a stochastic process. As far as we know, such a description is still missing; however, as we will show in Section~\ref{sec:Dirac_Comb}, when the curves $t\mapsto u_D(x,t)$ are ``averaged'' over $x\in\R$, such a description as a stochastic process might be possible.

Thus, as a second step, we focus our attention to one space dimension and to the particular case when $f$ is the Dirac comb 
\begin{equation*}
F_D(x) := \sum_{m\in\Z}\delta(x-m).
\end{equation*}
Even though $F_D$ is not a proper function but a distribution,
so at first $h_\delta[F_D]$ does not make sense,
we are able to extend, after renormalization, the functional $h_\delta$ to periodic functions
and then to the Dirac comb.
To approach the Dirac comb in $\R$ we use functions of the form
\begin{equation}\label{f12}
f_{\varepsilon_1,\varepsilon_2}(x) := N_{\varepsilon_2}^{-1} \psi(\varepsilon_2 x)F_{\varepsilon_1}/\norm{F_{\varepsilon_1}}_2,
\end{equation}
where $\psi$ is a smooth function with $\psi(0) = 1$, $N_{\varepsilon_2}$ is chosen so that $\norm{f_{\varepsilon_1,\varepsilon_2}}_2 = 1$, and
\begin{equation*}
F_{\varepsilon_1}(x) := \sum_{m\in\Z}\varepsilon_1^{-1}e^{-\pi ((x-m)/\varepsilon_1)^2} = \sum_{m\in\Z} e^{-\pi(\varepsilon_1m)^2}e^{2\pi i x m}.
\end{equation*} 
We will prove that in the limit $\varepsilon_2\to 0$ ($\varepsilon_1$ fixed) the function $h_\delta[f_{\varepsilon_1,\varepsilon_2}]$ splits into a smooth background and a oscillating, periodic function that we call $h_{\textrm{p},\delta}[F_{\varepsilon_1}]$. In Figure~\ref{fig:Asymptotic} we can see how $h_\delta[f_{\varepsilon_1,\varepsilon_2}]$ approaches, after renormalization, $h_{\textrm{p},\delta}[F_{\varepsilon_1}]$. 

\begin{figure}[t]
\centering
\includegraphics[scale=0.7]{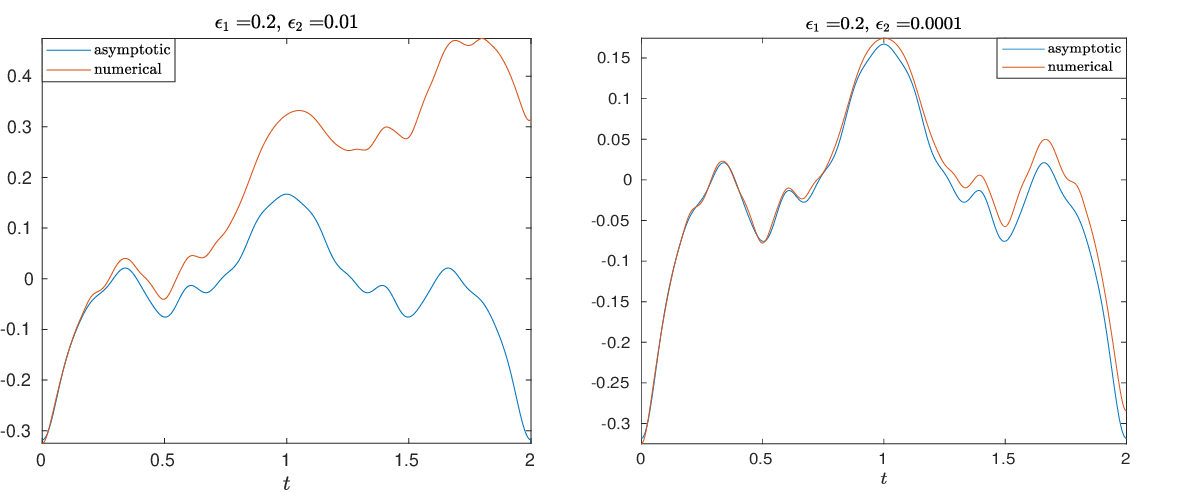}
\caption{The red line is the plot of $h_\delta[f_{\varepsilon_1,\varepsilon_2}]$, for $\delta = 0.25$, using its definition in \eqref{eq:def_h_delta}, and the blue line is $h_{\textrm{p},\delta}[F_{\varepsilon_1}]$, to be defined in \eqref{eq:def:periodic_other}. In this plot we have removed from $h_\delta[f_{\varepsilon_1,\varepsilon_2}]$ a constant term $C_{\varepsilon_2}$ and then multiplied by $\varepsilon_2^{-1}$; this will be clear when we reach \eqref{eq:thm:asymptotic_Smooth_F_1D}. The choice of $\varepsilon_1 = 0.2$ is due to the high computational cost of taking a smaller value of $\varepsilon_1$ and then to diminish $\varepsilon_2$.}\label{fig:Asymptotic}
\end{figure}

The final step is to pass to the limit $\varepsilon_1\to 0$. In this way we obtain a periodic, pure point distribution $h_{\textrm{p},\delta}[F_D]$ with support at rational times, a fact which is very reminiscent of the Talbot effect. More concretely, we prove the following result; see Fig.~\ref{fig:Dirac_deltas}.

\begin{figure}[t]
\centering
\includegraphics[scale=0.8]{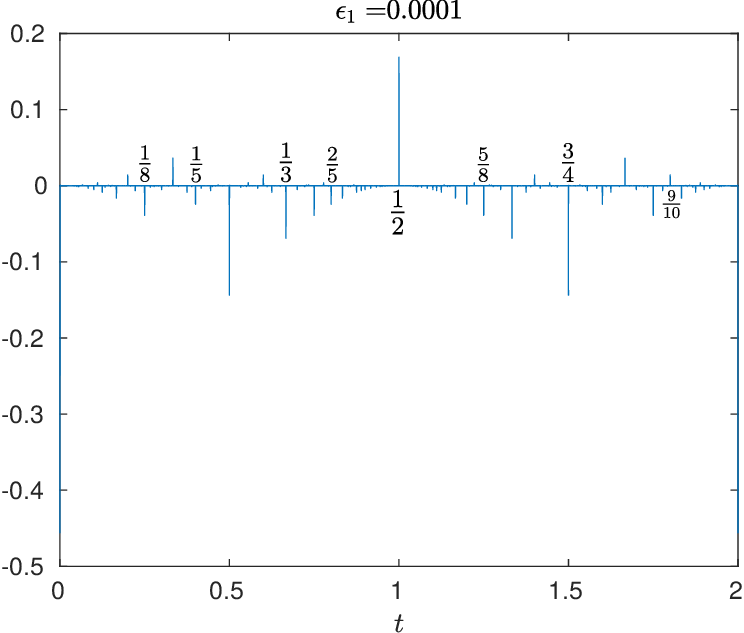}
\caption{Plot of $h_{\textrm{p},\delta}[F_{\varepsilon_1}]$, to be defined in \eqref{eq:def:periodic_other}, when $\delta = 0.25$. In Figure~\ref{fig:Asymptotic} the plot of $h_{\textrm{p},\delta}[F_{\varepsilon_1}]$ lacks the rich structure suggested by \eqref{eq:thm:limit_FD} because $\varepsilon_1$ is still small there; however, as $\varepsilon_1$ approaches zero the emergence of Dirac deltas is clearly visible.}\label{fig:Dirac_deltas}
\end{figure}

\begin{theorem}\label{thm:limit_FD}
\begin{equation}\label{eq:thm:limit_FD}
\begin{split}
h_{\textrm{p},\delta}[F_D](2t) &= -\frac{2b_{1,\delta}}{\norm{\psi}_2^2}\zeta(2(1+\delta))\Big[\sum_{\substack{(p,q)=1 \\ q>0\textrm{ odd}}}\frac{1}{q^{2(1+\delta)}}\delta_\frac{p}{q}(t) - \\
&\hspace*{0.5cm}- \sum_{\substack{(p,q)=1 \\ q\equiv 2\,\Mod 4}}\frac{2(2^{1+2\delta}-1)}{q^{2(1+\delta)}}\delta_\frac{p}{q}(t) + \sum_{\substack{(p,q)=1 \\ q\equiv 0\,\Mod 4}}\frac{2^{2(1+\delta)}}{q^{2(1+\delta)}}\delta_\frac{p}{q}(t)\Big],
\end{split}
\end{equation}
where $\zeta(s)$ is the Riemann zeta function, and
\begin{equation*}
b_{1,\delta} = \frac{1}{(2\pi)^{2\delta}}\frac{\Gamma(2\delta)}{\abs{\Gamma(-\delta)}\Gamma(\delta)}.
\end{equation*}
\end{theorem}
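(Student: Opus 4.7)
The plan is to pass to the limit $\varepsilon_1\to 0$ in the explicit representation of $h_{\textrm{p},\delta}[F_{\varepsilon_1}]$ given by \eqref{eq:def:periodic_other} and to identify the limit as a pure-point distribution via Poisson summation. That representation, obtained from the Schr\"odinger evolution of $F_{\varepsilon_1}(x)=\sum_m e^{-\pi(\varepsilon_1 m)^2}e^{2\pi imx}$ paired against the homogeneous observable $|x|^{2\delta}$, should take the schematic form
\begin{equation*}
h_{\textrm{p},\delta}[F_{\varepsilon_1}](t) \;=\; -\frac{2b_{1,\delta}}{\norm{\psi}_2^2}\sum_{\substack{m\in\Z,\,n\neq 0}}\frac{e^{-\pi\varepsilon_1^2((m+n)^2+m^2)}}{|n|^{1+2\delta}}\,e^{-i\pi n(n+2m)t},
\end{equation*}
with $b_{1,\delta}=(2\pi)^{-2\delta}\Gamma(2\delta)/(\abs{\Gamma(-\delta)}\Gamma(\delta))$ arising from the one-dimensional Riesz-potential pairing $\widehat{|x|^{2\delta}}\propto|\xi|^{-1-2\delta}$, and the $\norm{\psi}_2^{-2}$ from the normalization $N_{\varepsilon_2}$ in \eqref{f12}.

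As $\varepsilon_1\to 0$ the Gaussian weight tends to $1$, and for each fixed $n$ the sum $\sum_m e^{-\pi\varepsilon_1^2 m^2}e^{-2\pi imnt}$ converges in $\mathcal{S}'(\R)$ to $|n|^{-1}\sum_{k\in\Z}\delta_{k/n}(t)$ by Poisson summation. Pulling this into the outer sum is justified by pairing against a Schwartz test function $\varphi$ and bounding the iterated series by $\sum_{n\neq 0}|n|^{-(1+2\delta)}\sup_{k}|\varphi(k/n)|$, which is summable since $\delta>0$; dominated convergence then applies. Using the collapse $e^{-i\pi n^2 t}\big|_{t=k/n}=(-1)^{nk}$ to evaluate the oscillatory factor against each Dirac mass, I obtain
\begin{equation*}
h_{\textrm{p},\delta}[F_D](s) \;=\; -\frac{2b_{1,\delta}}{\norm{\psi}_2^2}\sum_{\substack{n\neq 0\\ k\in\Z}}\frac{(-1)^{nk}}{|n|^{2+2\delta}}\,\delta_{k/n}(s).
\end{equation*}

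To recover \eqref{eq:thm:limit_FD} I evaluate at $s=2t$, which converts each $\delta_{k/n}(s)$ into $\tfrac12\delta_{k/(2n)}(t)$, and regroup by reduced fractions. For a reduced $p/q$, the pairs $(n,k)$ with $k/(2n)=p/q$ are $n=jq,\;k=2jp$ when $q$ is odd, and $n=j(q/2),\;k=jp$ when $q$ is even, with $j$ ranging over $\Z\setminus\{0\}$. The sign $(-1)^{nk}$ then depends on $q\bmod 4$: for $q$ odd it is $+1$, and the sum contributes $2\zeta(2+2\delta)/q^{2+2\delta}$; for $q\equiv 2\pmod 4$ the sign becomes $(-1)^j$, producing an alternating series equal to $-2(1-2^{-(1+2\delta)})\zeta(2+2\delta)$, which combined with the rescaling $1/(q/2)^{2+2\delta}=2^{2+2\delta}/q^{2+2\delta}$ yields $-4(2^{1+2\delta}-1)\zeta(2+2\delta)/q^{2+2\delta}$; for $q\equiv 0\pmod 4$ the sign is again $+1$ and only the rescaling contributes, giving $2^{3+2\delta}\zeta(2+2\delta)/q^{2+2\delta}$. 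Multiplying each contribution by $-b_{1,\delta}/\norm{\psi}_2^2$ (the factor $\tfrac12$ from $\delta_{k/n}\to\tfrac12\delta_{k/(2n)}$ absorbed into the prefactor $-2b_{1,\delta}/\norm{\psi}_2^2$) reproduces the three series in \eqref{eq:thm:limit_FD}.

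The principal technical obstacle is justifying the distributional limit $\varepsilon_1\to 0$, since $h_{\textrm{p},\delta}[F_{\varepsilon_1}]$ is a smooth periodic function for $\varepsilon_1>0$ but its limit is a pure-point distribution on $\R$ supported on the rationals; the dominated-convergence argument outlined above handles this. The remainder of the work is the combinatorial bookkeeping of the $q\bmod 4$ cases, which is elementary but must be carried out carefully to recover the precise powers of $2$ and the values of the alternating Dirichlet series that appear in the statement.
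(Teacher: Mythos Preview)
Your approach is correct and genuinely different from the paper's. The paper works on the Fourier side: it first evaluates, for each integer $k$, the divisor sum
\[
\sum_{\substack{m_1\neq m_2\\ m_1^2-m_2^2=k}}\frac{1}{|m_1-m_2|^{1+2\delta}}
\]
via the factorisation $k=de$ with $d\equiv e\pmod 2$, obtaining $2\sigma_{-1-2\delta}(k)$, $2^{-2\delta}\sigma_{-1-2\delta}(k/4)$, or $0$ according to $k\bmod 4$. It then inverts the resulting weighted Dirac comb $\sum_k c_k\delta_{k/2}(\tau)$ term by term, rearranging the Fourier series of each arithmetic progression into sums over reduced fractions. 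You instead stay in the time variable throughout: fixing $n=m_1-m_2$ and applying Poisson summation in $m=m_2$ replaces the number-theoretic lemma by the single identity $\sum_m e^{-2\pi i mnt}\to |n|^{-1}\sum_k\delta_{k/n}(t)$, after which the $q\bmod 4$ trichotomy emerges from the sign $(-1)^{nk}$ and the parametrisation of $k/(2n)=p/q$. Your route is arguably more direct and avoids the auxiliary divisor computation; the paper's route has the advantage of making explicit the structure of $\hat h_{\textrm{p},\delta}[F_D]$ (supported on half-integers, vanishing at $k\equiv 2\pmod 4$), which is of independent interest.

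One small point to tighten: your dominated-convergence bound ``$\sum_{n\neq 0}|n|^{-(1+2\delta)}\sup_k|\varphi(k/n)|$'' is not quite the right majorant, since after Poisson summation the pairing with $\varphi$ produces $|n|^{-1}\sum_k\varphi(k/n)$, a Riemann sum of size $O(1)$ uniformly in $n$ (or $O(|n|)$ terms each bounded, giving the same conclusion). Multiplying by the outer weight $|n|^{-(1+2\delta)}$ then yields a summable series in $n$; this is what actually justifies the interchange. The remainder of your argument, including the case analysis and the constants, checks out.
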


\begin{figure}[t]
\centering
\includegraphics[scale=0.8]{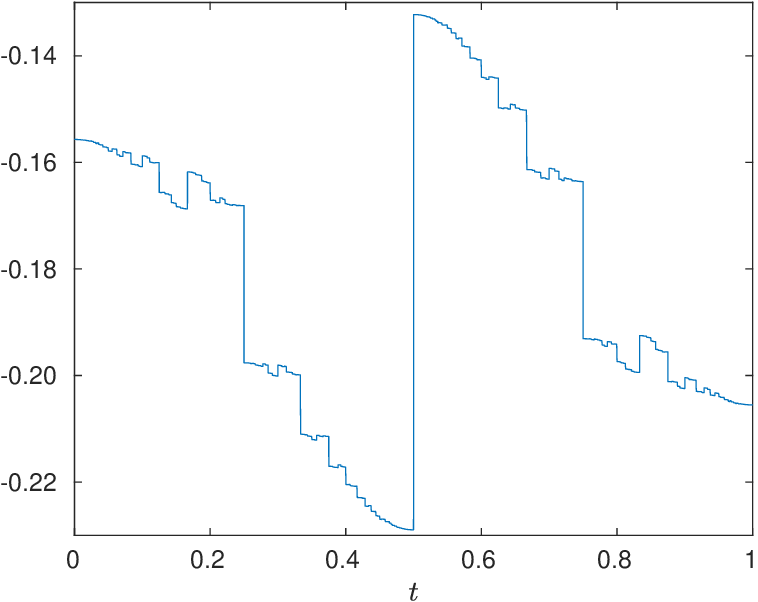}
\caption{Plot of $H_\delta$ in  \eqref{eq:intro:primitive_H}. Even though $H_\delta$ has some symmetry, \textit{e.g.} $H_\delta(1-t) = c_\delta-H_\delta(t-)$, the appearance of ``unpredictable'' large jumps resembles an $\alpha$-Lévy process with small exponent $\alpha$.}\label{fig:Hdelta}
\end{figure}

Our final result is about the properties of $h_{\textrm{p},\delta}[F_D]$. Let us consider its primitive, that is,
\begin{equation}\label{eq:intro:primitive_H}
H_\delta(t) := \int_{[0,t]}h_{\textrm{p},\delta}(2s)\,ds.
\end{equation} 
As we anticipated when discussing the Talbot effect, $H_\delta$ resembles the outcome of a stochastic process, or more precisely, a pure jump $\alpha$-L\'evy process with $\alpha:= 1/(1+\delta)$ (see Fig.~\ref{fig:Hdelta}), which strongly suggests the presence of intermittency. We discuss this in more detail in Section~\ref{sec:Dirac_Comb}. 

Again, inspired by the theory of turbulence (Ch. 8 of \cite{MR1428905}), 
we compute the H\"older exponent of $H_\delta$ at each irrational time and 
show that it depends on its ``irrationality'' $\mu(t)$; 
the precise definition of $\mu(t)$ is given in Definition~\ref{def:Irrationality}. 
We also look
at the so called spectrum of singularities $d_{H_\delta}(\gamma) := \textrm{dim}\,F_\gamma$, where
\begin{equation} \label{eq:def:spectrum_sing}
F_\gamma := \{t\in [0,1)\mid H_\delta \textrm{ has H\"older exponent } \gamma \textrm{ at } t\}.
\end{equation} 
By convention $d_{H_\delta}(\gamma) = -\infty$ if $F_\gamma = \emptyset$.
Our main result in this direction is the following one.

\begin{theorem}\label{thm:intro:Holder_H}
Let $H_\delta$ be the function in \eqref{eq:intro:primitive_H} 
and set $\alpha:= 1/(1+\delta)$.
Then,
\begin{equation} \label{eq:thm:SpecSingH}
d_{H_\delta}(\gamma) = 
\begin{cases}
\alpha\gamma, & \textrm{if }\gamma\in [0,1/\alpha], \\
-\infty, & \textrm{if }\gamma > 1/\alpha.
\end{cases}
\end{equation}
\end{theorem}

Jaffard proved in Theorem~1 of \cite{zbMATH01332836} that 
the spectrum of singularities of an $\alpha$-Lévy process is almost surely equal to \eqref{eq:thm:SpecSingH}. 
This identity tightens our suggested relationship between $H_\delta$ and Lévy processes.
\\[2mm]
\indent{\bf Structure of the paper:}
\begin{itemize}
\item In Section~\ref{sec:Static} we discuss the static, fractional uncertainty principle (Theorem~\ref{thm:Static_UP}) and prove some properties of the space $\Sigma_\delta(\R^n)$. 
\item In Section~\ref{sec:dynamical} we discuss the dynamical, fractional uncertainty principle (Theorem~\ref{thm:Dynamical_UP}) and we compute the Fourier transform of $h_\delta[f]$ (Theorem~\ref{thm:h_away_zero}). In Section~\ref{sec:regularity} we exploit Theorem~\ref{thm:h_away_zero} to obtain regularity properties of $h_\delta[f]$.
\item In Section~\ref{sec:periodic_data} we define $h_\delta[f]$ for periodic initial data. In Section~\ref{sec:Dirac_Comb} we study the ``dispersion'' properties of the Dirac comb, and prove Theorems~\ref{thm:limit_FD} and \ref{thm:intro:Holder_H}.
\end{itemize}

Finally, some questions that arise naturally for future work are:

\begin{enumerate}[nolistsep]
\item  What are the optimal constants in Theorems~\ref{thm:Static_UP} and \ref{thm:Dynamical_UP}? Can $h_\delta[Q_\delta]$ be explicitly computed?
\item What are the results about the Dirac Comb in the non-linear setting?
\item Study different regimes for  $\varepsilon_1$ and $\varepsilon_2$ in \eqref{f12};
\item For other observables (weights) $W(x)$, can we estimate $\japan{e^{-it\hbar\Delta/2}We^{it\hbar\Delta/2}}$ in terms of classical trajectories  $W(x+tp)$?
\end{enumerate}

\subsection*{Notation}

\begin{itemize}
\item \textit{Relations:} If $x\lesssim y$ then $x\le Cy$ for some constant $C>0$, and similarly for $x\gtrsim y$ and $x\simeq y$. If $x\ll 1$ then $x\le c$, where $c>0$ is a sufficiently small constant, and similarly for $x\gg 1$.
\item \textit{Miscellaneous:} $a+ := a + \varepsilon$, for $0<\varepsilon\ll 1$. $\japan{x} := (1+\abs{x}^2)^\frac{1}{2}$. $\sgn$ is the sign function. The standard measure on the sphere is denoted by $dS$.
\item If $A\subset\R^n$, then $\abs{A}$ is its Lebesgue measure and $\ind_A$ is the indicator function.
\item The fractional derivative is $(D^\delta f)^\wedge(\xi) := \abs{\xi}^\delta\hat{f}(\xi)$.
\item Let $I\subset\R$ be an interval with center $c(I)$. The projection to frequencies $\abs{\xi}\in I$ is the operator $(P_If)^\wedge(\xi) := \zeta_I\hat{f}(\xi)$, where $\zeta_I(\xi) := \zeta((\xi-c(I))/\abs{I})$ and $\zeta$ is a fixed cutoff of $[-1,1]$.
\item If $X$ is a function space, then $X_\textrm{loc} := \{f\in\mathcal{S}'\mid \zeta f\in X\textrm{ for every } \zeta\in C^\infty_0\}$.
\item Spaces: for $\Sigma_\delta(\R^n)$ see \eqref{eq:def:Sigma_delta}, and for $\Lambda^\alpha(\R^n)$ see \eqref{eq:def:Lipschitz}. $H^s(\R^n)$ is the space of $f\in L^2(\R^n)$ with $D^sf\in L^2(\R^n)$.
\item $h_f(t)$ is the Hölder exponent of a function $f$ at $t\in\R$; see Def.~\ref{def:Holder}). $d_f$ is the spectrum of singularities; see \eqref{eq:def:spectrum_sing}.
\item $\mu(t)$ is the irrationality measure of $t\in \R$; see Def.~\ref{def:Irrationality}.
\end{itemize}

\subsection*{Funding}

This research is supported by the Basque Government through the BERC 2018-2021 program, by the Spanish State Research Agency through BCAM Severo Ochoa excellence accreditation SEV-2017-0718, and by the ERCEA Advanced Grant 2014 669689-HADE. The second author is also supported by the project PGC2018-094528-B-I00.

\subsection*{Acknowledgments}

We thank Daniel Eceizabarrena for many insightful conversations over the course of this work. We also thank the referees for their suggestions, they helped us to improve the exposition of our work.
 

\section{Static, Fractional Uncertainty Principle}\label{sec:Static}

In this section we study the static, fractional uncertainty principle. We prove some general properties of $\Sigma_\delta(\R^n)$, which will play an important role in our investigation of $h_\delta$.

The (static) uncertainty principle asserts that there exists $a_\delta>0$  such that
\begin{equation*}
\norm{\abs{x}^\delta f}_2\norm{D^\delta f}_ 2\ge a_\delta^2\norm{f}_2^2, \qquad \textrm{for }  0<\delta\le 1.
\end{equation*}
Actually, this is equivalent to the continuous embedding $\Sigma_\delta(\R^n)\hookrightarrow L^2(\R^n)$. In fact, let us define
\begin{equation}
a_\delta^2 := \inf_{\norm{f}_2 = 1}\norm{\abs{x}^\delta f}_2\norm{D^\delta f}_2,
\end{equation}
We can exploit the symmetry $f_\lambda(x) := \lambda^\frac{n}{2}f(\lambda x)$ to force the condition $\norm{\abs{x}^\delta f_\lambda}_2 = \norm{D^\delta f_\lambda}_2$ while preserving $\norm{f}_2 = 1$, so that
\begin{equation*}
2a_\delta^2 = \inf_{\substack{\norm{f}_2 = 1 \\ \norm{\abs{x}^\delta f}_2 = \norm{D^\delta f}_2}}2\norm{\abs{x}^\delta f}_2\norm{D^\delta f}_2 \ge  \inf_{\norm{f}_2 = 1}\norm{f}_{\Sigma_\delta}^2
\end{equation*}
On the other hand, $2\norm{\abs{x}^\delta f}_2\norm{D^\delta f}_2\le \norm{f}_{\Sigma_\delta}^2$ implies the reverse inequality $2a_\delta^2 \le \inf_{\norm{f}_2 = 1}\norm{f}_{\Sigma_\delta}^2$, so
\begin{equation*}
2a_\delta^2 = \inf_{\norm{f}_2 = 1}\norm{f}_{\Sigma_\delta}^2.
\end{equation*}

\begin{lemma}\label{thm:SigmaInL2}
The class $\Sigma_\delta(\R^n)$ is a Hilbert space compactly embedded in $L^2(\R^n)$; in particular,
\begin{equation}\label{eq:thm:SigmaInL2}
\norm{f}_2 \le C(\norm{\abs{x}^\delta f}^2_2+\norm{D^\delta f}_ 2^2)^\frac{1}{2}.
\end{equation}
Furthermore, there exists a function $Q_\delta$ with $\norm{Q_\delta}_2 = 1$ such that
\begin{equation}\label{eq:thm:minimizer}
\inf_{\norm{f}_2=1}\norm{f}_{\Sigma_\delta} = \norm{Q_\delta}_{\Sigma_\delta}
\end{equation}
\end{lemma}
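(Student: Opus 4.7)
The plan is to use the direct method of the calculus of variations to produce $Q_\delta$ first, and to deduce the embedding bound \eqref{eq:thm:SigmaInL2}, the Hilbert structure, and the compact embedding as consequences.

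Take a minimizing sequence $\{f_n\}$ with $\norm{f_n}_2=1$ and $\norm{f_n}_{\Sigma_\delta}\to I_0:=\inf\{\norm{f}_{\Sigma_\delta}:\norm{f}_2=1\}$, so that $\norm{\abs{x}^\delta f_n}_2$ and $\norm{D^\delta f_n}_2$ stay uniformly bounded. Chebyshev's inequality applied in physical and in Fourier variables yields simultaneous tightness,
\begin{equation*}
\int_{\abs{x}>R}\abs{f_n}^2\,dx\le R^{-2\delta}\norm{\abs{x}^\delta f_n}_2^2, \qquad \int_{\abs{\xi}>R}\abs{\hat f_n}^2\,d\xi\le R^{-2\delta}\norm{D^\delta f_n}_2^2,
\end{equation*}
uniformly in $n$. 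Using the identity
\begin{equation*}
\norm{f_n(\cdot+h)-f_n}_2^2=\int\abs{\hat f_n(\xi)}^2\abs{e^{2\pi ih\cdot\xi}-1}^2\,d\xi
\end{equation*}
and splitting at $\abs{\xi}=R$, the $\xi$-tightness converts into equicontinuity under $L^2$-translations. Combined with $\norm{f_n}_2=1$, the Kolmogorov-Riesz precompactness criterion produces a subsequence converging strongly in $L^2$ to some $f$, and necessarily $\norm{f}_2=1$.

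Extracting a further a.e.\ convergent subsequence and applying Fatou's lemma to the densities $\abs{x}^{2\delta}\abs{f_n}^2$ and, through Plancherel, $\abs{\xi}^{2\delta}\abs{\hat f_n}^2$, yields
\begin{equation*}
\norm{f}_{\Sigma_\delta}^2 \le \liminf_n \norm{f_n}_{\Sigma_\delta}^2 = I_0^2,
\end{equation*}
so $f=:Q_\delta$ attains the infimum. Necessarily $I_0>0$, otherwise both $\abs{x}^\delta f$ and $D^\delta f$ would vanish, forcing $f=0$ against $\norm{f}_2=1$.

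The bound \eqref{eq:thm:SigmaInL2} is then immediate by homogeneity with $C=I_0^{-1}$. Equipping $\Sigma_\delta$ with the inner product $\inner{f}{g}_{\Sigma_\delta}:=\int\abs{x}^{2\delta}f\conj{g}\,dx+\int\abs{\xi}^{2\delta}\hat f\,\conj{\hat g}\,d\xi$, completeness follows because a $\norm{\cdot}_{\Sigma_\delta}$-Cauchy sequence is $L^2$-Cauchy by \eqref{eq:thm:SigmaInL2}, and its $L^2$-limit $f$ inherits $\abs{x}^\delta f, D^\delta f\in L^2$ by uniqueness of distributional limits. Finally, compactness of the embedding is the same Kolmogorov-Riesz argument applied to any sequence bounded in $\Sigma_\delta$, which \eqref{eq:thm:SigmaInL2} renders bounded in $L^2$. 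The main obstacle is ruling out the vanishing scenario, i.e.\ upgrading the weak $L^2$-limit of the minimizing sequence to a strong one; the double tightness in $x$ and in $\xi$, which is really the fractional analogue of Heisenberg's phase-space localization, is precisely what delivers Kolmogorov-Riesz and prevents mass from escaping to infinity or dispersing to zero.
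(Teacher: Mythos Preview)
Your proof is correct and follows essentially the same route as the paper: both pass through the Fr\'echet--Kolmogorov/Kolmogorov--Riesz criterion, obtaining tail decay from $\norm{\abs{x}^\delta f_n}_2$ and $L^2$-equicontinuity from the high-frequency control provided by $\norm{D^\delta f_n}_2$. The only minor variation is that you place the $L^2$-limit back into $\Sigma_\delta$ via Fatou's lemma on a subsequence a.e.\ convergent in both physical and Fourier variables, whereas the paper invokes weak sequential compactness in the Hilbert space $\Sigma_\delta$ and then identifies the weak limit with the strong $L^2$-limit; both arguments are standard and equivalent in effect.
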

\begin{proof}
We choose a sequence of functions $\{f_n\}_n$ with $\norm{f_n}_2 = 1$ that minimizes $\norm{g}_{\Sigma_\delta}$, that is, $\norm{f_n}_{\Sigma_\delta}\to \inf_{\norm{g}_2=1}\norm{g}_{\Sigma_\delta}$.

By the Fr\'echet-Kolmogorov theorem, the sequence $\{f_n\}$ will be relatively compact in $L^2(\R^n)$ if the following two conditions holds uniformly in $n$:
\begin{align*}
\textrm{(1)}&\quad \int_{\abs{x}>R} \abs{f_n}^2\,dx < \varepsilon, \quad \textrm{for every } \varepsilon > 0 \textrm{ and } R\gg 1 \\
\textrm{(2)}&\quad \norm{f_n(\cdot - h) - f_n}_2 < \varepsilon, \quad \textrm{for every } \varepsilon > 0 \textrm{ and } \abs{h}\ll 1.
\end{align*}
The condition (1) follows from
\begin{equation*}
\int_{\abs{x}>R} \abs{f_n}^2\,dx \le R^{-2\delta}\int\abs{x}^{2\delta}\abs{f_n}^2\,dx \lesssim R^{-2\delta}.
\end{equation*}
The condition (2) follows from
\begin{align*}
\norm{f_n(\cdot - h) - f_n}_2^2 &= \int \abs{\hat{f}_n}^2\abs{e^{-2\pi i\xi\cdot h}-1}^2\,d\xi \\
&\le \int_{\abs{\xi}\le \abs{h}^{-\frac{1}{2}}} +\int_{\abs{\xi}>\abs{h}^{-\frac{1}{2}}} \abs{\hat{f}_n}^2\abs{e^{-2\pi i\xi\cdot h}-1}^2\,d\xi \\
&\lesssim \abs{h} + \abs{h}^\delta.
\end{align*}
Hence, we can choose a sub-sequence $\{f_{n_k}\}_k$ that converges in $L^2(\R^n)$ to some function $Q_\delta\in L^2(\R^n)$ with $\norm{Q_\delta}_2 = 1$. 

If $\inf_{\norm{g}_2=1}\norm{g}_{\Sigma_\delta} = 0$ then $\norm{\abs{x}^\delta f_{n_k}}_2\to 0$ and, passing to a sub-sequence if necessary, we see that $f_{n_k}\to 0$ a.e., which contradicts $\norm{Q_\delta}_2 = 1$. Thus, $\inf_{\norm{g}_2=1}\norm{g}_{\Sigma_\delta}>0$ and $\Sigma_\delta(\R^n)$ is continuously embedded in $L^2(\R^n)$, which is \eqref{eq:thm:SigmaInL2}. Incidentally, the proof shows that the ball $\{\norm{g}_{\Sigma_\delta}\le 1\}$ is relatively compact in $L^2(\R^n)$, so the embedding is compact.

We now prove that $Q_\delta\in \Sigma_\delta(\R^n)$. Since $\Sigma_\delta(\R^n)$ is a Hilbert space, we can pass to a sub-sequence, say $\{f_{n_k}\}_k$, that converges weakly to some $f^* \in \Sigma_\delta(\R^n)$. By \eqref{eq:thm:SigmaInL2} every $h\in L^2(\R^n)$ defines a continuous linear map $g \mapsto \int gh$ in $\Sigma_\delta(\R^n)$, then $\int Q_\delta h = \int f^* h$ and $Q_\delta = f^*\in \Sigma_\delta(\R^n)$.
\end{proof}

The minimizer is the ground state of a differential equation.

\begin{lemma}
If $\norm{Q_\delta}_{\Sigma_\delta} = \inf_{\norm{u}_2=1}\norm{u}_{\Sigma_\delta}$ and $\norm{Q_\delta}_2 = 1$, then
\begin{equation}\label{eq:thm:Eigen_Q_delta}
D^{2\delta}Q_\delta + \abs{x}^{2\delta}Q_\delta = 2a_\delta^2 Q_\delta.
\end{equation}
\end{lemma}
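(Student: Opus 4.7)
The plan is to derive the equation as the Euler--Lagrange equation for the constrained minimization problem $\inf\{\norm{u}_{\Sigma_\delta}\mid \norm{u}_2=1\}$, whose infimum (by the preceding lemma and the computation that $2a_\delta^2 = \inf_{\norm{f}_2=1}\norm{f}_{\Sigma_\delta}^2$) is $\sqrt{2}\,a_\delta$ and is attained at $Q_\delta$.

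First I would fix an arbitrary test function $\varphi\in\Sigma_\delta(\R^n)$ and consider the one-parameter family
\begin{equation*}
u_\varepsilon := \frac{Q_\delta + \varepsilon\varphi}{\norm{Q_\delta + \varepsilon\varphi}_2},
\end{equation*}
which satisfies $\norm{u_\varepsilon}_2 = 1$ for all small $\varepsilon$. By the minimality of $Q_\delta$, the smooth function $\varepsilon\mapsto \norm{u_\varepsilon}_{\Sigma_\delta}^2$ has a critical point at $\varepsilon = 0$. Expanding the quotient and using $\norm{Q_\delta}_2=1$ and $\norm{Q_\delta}_{\Sigma_\delta}^2 = 2a_\delta^2$, the derivative at $\varepsilon=0$ reads
\begin{equation*}
0 = 2\real\Big[\int \abs{x}^{2\delta}Q_\delta\conj{\varphi}\,dx + \int D^\delta Q_\delta\,\conj{D^\delta\varphi}\,dx - 2a_\delta^2\int Q_\delta\conj{\varphi}\,dx\Big].
\end{equation*}
Replacing $\varphi$ by $i\varphi$ handles the imaginary part, so both real and imaginary parts vanish and the complex identity
\begin{equation*}
\int \abs{x}^{2\delta}Q_\delta\conj{\varphi}\,dx + \int D^\delta Q_\delta\,\conj{D^\delta\varphi}\,dx = 2a_\delta^2\int Q_\delta\conj{\varphi}\,dx
\end{equation*}
holds for every $\varphi\in\Sigma_\delta(\R^n)$.

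The remaining step is to read off \eqref{eq:thm:Eigen_Q_delta}: by Plancherel the second term equals $\int \abs{\xi}^{2\delta}\hat Q_\delta\,\conj{\hat\varphi}\,d\xi$, which is the pairing of $D^{2\delta}Q_\delta$ with $\varphi$. Since $\varphi$ was arbitrary in the dense subspace $\Sigma_\delta\subset L^2$, the identity $\abs{x}^{2\delta}Q_\delta + D^{2\delta}Q_\delta = 2a_\delta^2 Q_\delta$ follows in the sense of tempered distributions, which is how \eqref{eq:thm:Eigen_Q_delta} is to be interpreted.

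The only subtlety I would need to address carefully is that neither $\abs{x}^{2\delta}Q_\delta$ nor $D^{2\delta}Q_\delta$ need lie in $L^2$ individually (a priori we only know $\abs{x}^\delta Q_\delta, D^\delta Q_\delta\in L^2$), so \eqref{eq:thm:Eigen_Q_delta} cannot be asserted as an $L^2$ identity from these manipulations alone; the clean statement is as a distributional (equivalently $\Sigma_\delta$-dual) eigenvalue equation. With that caveat the derivation is essentially the standard Lagrange-multiplier argument, and I do not anticipate genuine difficulty beyond justifying differentiation under the integral, which is immediate from $Q_\delta,\varphi\in\Sigma_\delta$ and the Cauchy--Schwarz inequality applied to the bilinear form $\langle\cdot,\cdot\rangle_{\Sigma_\delta}$.
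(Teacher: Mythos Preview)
Your proposal is correct and follows essentially the same approach as the paper: both derive the Euler--Lagrange identity $(Q_\delta,\varphi)_{\Sigma_\delta}=2a_\delta^2(Q_\delta,\varphi)_2$ for all $\varphi\in\Sigma_\delta$ by a first-variation argument, the only cosmetic difference being that the paper parametrizes variations on the $L^2$-sphere by $w(\theta)=\cos\theta\,Q_\delta+\sin\theta\,v$ with $v\perp Q_\delta$ (and then extends to general $v$ by projection), whereas you use the Rayleigh-quotient normalization $u_\varepsilon=(Q_\delta+\varepsilon\varphi)/\|Q_\delta+\varepsilon\varphi\|_2$ directly. Your remark that \eqref{eq:thm:Eigen_Q_delta} should be read as an identity in $\Sigma_\delta^*$ (equivalently, distributionally) is a useful clarification the paper leaves implicit.
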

\begin{proof}
We take $v\in \Sigma_\delta(\R^n)$, with $\norm{v}_2 = 1$, orthogonal to $Q_\delta$ in $L^2(\R^n)$. Let us define $w(\theta) := \cos\theta\, Q_\delta + \sin\theta\,v$ so that $f(\theta) := \norm{w(\theta)}_{\Sigma_\delta}^2$ has a minimum at $\theta = 0$. Since the derivative is
\begin{equation*}
f'(\theta) = \sin(2\theta)\big(\norm{v}_{\Sigma_\delta}^2-2a_\delta^2\big) + 2\cos(2\theta)(Q_\delta,v)_{\Sigma_\delta},
\end{equation*}
then $(Q_\delta,v)_{\Sigma_\delta} = 0$; considering $\tilde v = v/\norm{v}_2$, we can remove the condition $\norm{v}_2 = 1$.

For any $v\in \Sigma_\delta(\R^n)$ the function $Pv = v - (Q_\delta,v)_2Q_\delta$ is orthogonal to $Q_\delta$ in $L^2(\R^n)$, so we have $(Q_\delta, Pv)_{\Sigma_\delta} = 0$ or
\begin{equation*}
(Q_\delta,v)_{\Sigma_\delta} = 2a_\delta^2(Q_\delta,v)_2,
\end{equation*}
which is \eqref{eq:thm:Eigen_Q_delta}.
\end{proof}

By the Perron--Frobenius theorem (see Ch. XIII.12 of \cite{zbMATH03622441}) the lowest eigenvalue of the operator $D^{2\delta} + \abs{x}^{2\delta}$ has multiplicity one and the ground state can be chosen strictly positive. To apply this method we need to know that the heat kernel $e^{-tD^{2\delta}}$ is positive; see \cite{zbMATH05779489} or Lemma~A.1 in \cite{zbMATH06199562}. Uniqueness implies that $\hat{Q}_\delta = Q_\delta$ and that $Q_\delta$ is radial.

In Corollary 3 of \cite{zbMATH05806832}, Kaleta and Kulczycki proved that the ground state satisfies $Q_\delta(x) \simeq 1/\abs{x}^{n+4\delta}$ ($0<\delta<1$) for $\abs{x}\gg 1$. 

We summarize the discussion so far in the following theorem, which was stated in the introduction.

\begin{customthm}{\ref{thm:Static_UP}}
{\it
There exists a constant $a_\delta>0$, for $0<\delta<1$, such that
\begin{equation}\label{eq:Heisenberg_delta}
\inf_{\norm{f}_2 = 1}\norm{\abs{x}^\delta f}_{L^2(\R^n)}\norm{D^\delta f}_{L^2(\R^n)} = a_\delta^2.
\end{equation}
Equality is attained and a minimizer $Q_\delta$ can be chosen strictly positive and satisfying $\norm{\abs{x}^\delta Q_\delta}_2 = \norm{D^\delta Q_\delta}_2$. Any other minimizer $f$ is of the form $f(x) = c\lambda^{n/2}Q_\delta(\lambda x)$ for some $\lambda>0$ and $\abs{c} = 1$. Furthermore, $Q_\delta(x)\simeq \abs{x}^{-n-4\delta}$ for $\abs{x}\gg 1$.
}
\end{customthm}

We now prove a couple of properties of $\Sigma_\delta(\R^n)$.

\begin{lemma}\label{thm:smooth_dense}
The space $C^\infty_0(\R^n)$ is dense in $\Sigma_\delta(\R^n)$.
\end{lemma}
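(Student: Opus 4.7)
The plan is to approximate any $f\in\Sigma_\delta(\R^n)$ by functions of the form $\chi_R\cdot(f*\varphi_\epsilon)$, where $\varphi_\epsilon(x)=\epsilon^{-n}\varphi(x/\epsilon)$ is a standard mollifier and $\chi_R(x)=\chi(x/R)$ with $\chi\in C^\infty_0(\R^n)$ equal to $1$ near the origin. This product automatically lies in $C^\infty_0(\R^n)$, and I would establish convergence in $\Sigma_\delta$ in two separate steps: first $f*\varphi_\epsilon\to f$ as $\epsilon\to 0$, then $\chi_R g\to g$ as $R\to\infty$ for any smooth $g\in\Sigma_\delta$.

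\textbf{Mollification step.} The $L^2$-convergence is standard, and since $D^\delta$ is a Fourier multiplier it commutes with convolution, so $D^\delta(f*\varphi_\epsilon)=(D^\delta f)*\varphi_\epsilon\to D^\delta f$ in $L^2$. For the weighted term I would write
\begin{equation*}
\abs{x}^\delta(f*\varphi_\epsilon)(x)-\bigl((\abs{\cdot}^\delta f)*\varphi_\epsilon\bigr)(x)=\int \bigl(\abs{x}^\delta-\abs{y}^\delta\bigr) f(y)\varphi_\epsilon(x-y)\,dy,
\end{equation*}
and use the elementary inequality $\abs{\abs{x}^\delta-\abs{y}^\delta}\le\abs{x-y}^\delta$, valid precisely because $0<\delta\le 1$, together with Young's inequality to bound the $L^2$-norm of this by $\norm{\abs{\cdot}^\delta\varphi_\epsilon}_1\norm{f}_2=\epsilon^\delta\norm{\abs{\cdot}^\delta\varphi}_1\norm{f}_2\to 0$. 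The complementary piece $(\abs{\cdot}^\delta f)*\varphi_\epsilon\to\abs{\cdot}^\delta f$ in $L^2$ by the usual approximate-identity property, completing the mollification step.

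\textbf{Truncation step.} Set $g:=f*\varphi_\epsilon$, which is smooth and lies in $\Sigma_\delta$ by the previous step, so $\chi_R g\in C^\infty_0(\R^n)$. Dominated convergence immediately gives $\chi_R g\to g$ and $\abs{x}^\delta\chi_R g\to\abs{x}^\delta g$ in $L^2$. The delicate piece is
\begin{equation*}
D^\delta(\chi_R g)-D^\delta g=(\chi_R-1)D^\delta g+[D^\delta,\chi_R]g;
\end{equation*}
the first summand vanishes in $L^2$ by dominated convergence, and for the commutator I would go to the frequency side, where
\begin{equation*}
\widehat{[D^\delta,\chi_R]g}(\xi)=\int\bigl(\abs{\xi}^\delta-\abs{\eta}^\delta\bigr)\hat{\chi}_R(\xi-\eta)\hat g(\eta)\,d\eta.
\end{equation*}
Applying the same Hölder-type inequality to $\abs{\xi}^\delta-\abs{\eta}^\delta$ together with Young's inequality yields $\norm{[D^\delta,\chi_R]g}_2\le\norm{\abs{\cdot}^\delta\hat{\chi}_R}_1\norm{g}_2$, and a change of variables produces $\norm{\abs{\cdot}^\delta\hat{\chi}_R}_1=R^{-\delta}\norm{\abs{\cdot}^\delta\hat{\chi}}_1$, which tends to $0$ since $\hat\chi$ is Schwartz. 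Combining the two steps and taking $\epsilon$ small then $R$ large finishes the proof.

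\textbf{Main obstacle.} The only non-routine ingredient is controlling the commutator between $D^\delta$ and multiplication by a smooth function. Both the mollification and the truncation step reduce this issue to the same elementary inequality $\abs{\abs{a}^\delta-\abs{b}^\delta}\le\abs{a-b}^\delta$ for $0<\delta\le 1$, applied on the physical side in one case and on the frequency side in the other; everything else is dominated convergence or a standard approximate-identity argument.
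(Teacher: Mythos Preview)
Your proof is correct and takes a genuinely different route from the paper. The paper first truncates (multiplying by $\zeta_R$) and then mollifies the compactly supported result; you reverse the order. More substantively, for the delicate half of each step---controlling $D^\delta$ under truncation, and $\abs{x}^\delta$ under mollification---the paper relies on a frequency-splitting argument together with Jensen's inequality $\abs{(\zeta_R f)^\wedge}^2\le \hat\zeta_R*\abs{\hat f}^2$ (which is why they arrange $\hat\zeta\ge 0$), whereas you handle both via the single commutator estimate driven by $\bigl|\abs{a}^\delta-\abs{b}^\delta\bigr|\le\abs{a-b}^\delta$. Your approach is cleaner and more symmetric between the physical and frequency sides, and it makes no positivity demand on the cutoff; the paper's argument is more hands-on but avoids any commutator language. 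Both are valid and of comparable length.
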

\begin{proof}
We choose a symmetric function $\zeta\in C^\infty_0(\R^n)$ such that $\zeta\ge 0$; we might replace $\zeta$ by $\zeta*\zeta$ to assume also that $\hat{\zeta}\ge 0$. By dilation and multiplication by a constant, we assume that $\zeta(0) = 1$ and $\int\zeta = 1$, and we define $\zeta_\lambda(x):=\zeta(x/\lambda)$. 

First we prove that functions with compact support are dense in $\Sigma_\delta(\R^n)$. We fix $\varepsilon>0$ and choose $R\gg_\varepsilon 1$ such that $\norm{\abs{x}^\delta(1-\zeta_R)f}_{2}<\varepsilon$, so we only have to prove that $\norm{\abs{\xi}^\delta(\hat{f}-(\zeta_R f)^\wedge)}_{2}\lesssim \varepsilon$ for $R\gg 1$.

We choose $\lambda\gg_\varepsilon 1$ such that $\norm{\abs{\xi}^\delta\ind_{\abs{\xi}>\lambda} \hat{f}}_{L^2}<\varepsilon$. Since $(\zeta_R f)^\wedge\to\hat{f}$ in $L^2$, then for $R\gg_{\varepsilon,\lambda} 1$ we have that $\norm{\abs{\xi}^\delta\ind_{\abs{\xi}<2\lambda}[\hat{f}-(\zeta_Ru)^\wedge]}_2<\varepsilon$. By Jensen's inequality $\abs{(\zeta_R f)^\wedge}^2\le \hat{\zeta}_R*\abs{\hat{f}}^2$, so
\begin{align*}
\int_{\abs{\xi}>2\lambda} \abs{\xi}^{2\delta}\abs{(\zeta_R f)^\wedge}^2\,d\xi &\le \int (\abs{\xi}^{2\delta}\ind_{\abs{\xi}>2\lambda})*\hat{\zeta}_R\,\abs{\hat{f}}^2\,d\xi \\
&\lesssim \frac{1}{R\lambda}\int_{\abs{\xi}<\lambda} \abs{\hat{f}}^2\,d\xi + \int_{\abs{\xi}>\lambda} \abs{\xi}^{2\delta}\abs{\hat{f}}^2\,d\xi \\
&\lesssim \frac{1}{R\lambda}\int \abs{\hat{f}}^2\,d\xi + \varepsilon^2,
\end{align*} 
where we exploited the rapid decay of $\hat{\zeta}_R$. Hence, $\norm{\abs{\xi}^\delta\ind_{\abs{\xi}>2\lambda}(\zeta_R f)^\wedge}_2\le C\varepsilon$ for $R\gg_{\varepsilon,\lambda} 1$, and
\begin{multline*}
\norm{\abs{\xi}^\delta(\hat{f}-(\zeta_R f)^\wedge)}_2 \le \norm{\abs{\xi}^\delta\ind_{\abs{\xi}<2\lambda}[\hat{f}-(\zeta_Rf)^\wedge]}_2 + \\ 
+ \norm{\abs{\xi}^\delta\ind_{\abs{\xi}>2\lambda}\hat{f}}_2 + \norm{\abs{\xi}^\delta\ind_{\abs{\xi}>2\lambda}(\zeta_R f)^\wedge}_2 \le C\varepsilon,
\end{multline*}
which shows that functions with compact support are dense in $\Sigma_\delta(\R^n)$.

A similar, though simpler argument shows that a function $f\in\Sigma_\delta(\R^n)$ with compact support can be approximated by functions $\zeta_\rho*f \in C^\infty_0(\R^n)$.
\end{proof}

The next lemma contains some embedding properties.

\begin{lemma}\label{thm:embedding_Sigma}
If $f\in\Sigma_\delta(\R^n)$, then $f, \hat{f}\in H^\delta(\R^n)\cap L^p(\R^n)$, where $p$ satisfies:
\begin{equation}\label{eq:thm:embedding_Sigma}
\begin{split}
\frac{1}{2}-\frac{\delta}{n}\le \frac{1}{p}<\frac{1}{2}+\frac{\delta}{n} \qquad &\textrm{if } n\ge 2, \textrm{ or } n=1 \textrm{ and } \delta< \frac{1}{2}, \\
0< \frac{1}{p}<1 \qquad &\textrm{if } n=1 \textrm{ and } \delta= \frac{1}{2}, \\
0\le \frac{1}{p}\le 1 \qquad &\textrm{if } n=1 \textrm{ and } \delta > \frac{1}{2}.
\end{split}
\end{equation}
\end{lemma}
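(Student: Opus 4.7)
The plan is to first reduce the statement to the $L^p$ embedding for $f$ alone, then split the range of $p$ into an upper range ($p\ge 2$) handled by Sobolev embedding and a lower range ($p\le 2$) handled by a Hölder argument that exploits the decay $|x|^\delta f\in L^2$.

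First I would verify the $H^\delta$ conclusion. Lemma~\ref{thm:SigmaInL2} gives $f\in L^2(\R^n)$, and $\norm{D^\delta f}_2<\infty$ by definition of $\Sigma_\delta$, so $f\in H^\delta$. For $\hat f$, Plancherel yields $\|\hat f\|_2=\|f\|_2$ and $\|D^\delta\hat f\|_2=\||x|^\delta f\|_2<\infty$, so $\hat f\in H^\delta$ too. Moreover the same identities show $\hat f\in\Sigma_\delta(\R^n)$, so the $L^p$ statement for $\hat f$ is a consequence of the $L^p$ statement applied to $\hat f\in\Sigma_\delta$. Hence it suffices to prove $f\in L^p(\R^n)$ for the stated range.

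For the upper half $1/2-\delta/n\le 1/p\le 1/2$, I would invoke the fractional Sobolev embedding $H^\delta(\R^n)\hookrightarrow L^p(\R^n)$, valid up to and including the endpoint $p=2n/(n-2\delta)$ whenever $\delta<n/2$. This directly covers the case $n\ge 2$ or $n=1$, $\delta<1/2$. In the one-dimensional special cases, $H^{1/2}(\R)\hookrightarrow L^p(\R)$ for every $2\le p<\infty$ (but not $p=\infty$), and $H^\delta(\R)\hookrightarrow L^\infty(\R)$ for $\delta>1/2$; these exactly produce the small-$1/p$ endpoints stated in \eqref{eq:thm:embedding_Sigma}.

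For the lower half $1/2\le 1/p<1/2+\delta/n$, I would split $f=f\ind_B+f\ind_{B^c}$ with $B=\{|x|\le 1\}$. Hölder gives $\norm{f\ind_B}_p\le|B|^{1/p-1/2}\norm{f}_2$. On $B^c$, Hölder with $1/p=1/2+1/q$ yields
\begin{equation*}
\norm{f\ind_{B^c}}_p\le \norm{\abs{x}^\delta f}_2\,\norm{\abs{x}^{-\delta}\ind_{B^c}}_q,
\end{equation*}
and the last factor is finite exactly when $\delta q>n$, i.e.\ $1/p<1/2+\delta/n$, which matches the stated right-open endpoint. For $n=1$, $\delta>1/2$, the condition $1/2+\delta>1$ allows $1/p=1$ ($q=2$), and $\int_{|x|>1}|x|^{-2\delta}\,dx<\infty$ in that case.

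The analytical content is standard; the only real bookkeeping is in dimension one, where the Sobolev embedding degenerates at $\delta=n/2=1/2$ (no $L^\infty$) but is super-critical for $\delta>1/2$ ($L^\infty$ available), and the Hölder side of the argument must be matched to each of these regimes to produce the exact endpoints in \eqref{eq:thm:embedding_Sigma}.
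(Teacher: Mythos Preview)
Your proposal is correct and follows essentially the same approach as the paper: the paper's proof is a two-line remark stating that the left inequalities in \eqref{eq:thm:embedding_Sigma} (the large-$p$ side) follow from the Sobolev Embedding Theorem and the right inequalities (the small-$p$ side) follow from H\"older's inequality, which is exactly the split you carry out in detail. Your additional observation that $\hat f\in\Sigma_\delta$ (so the conclusion for $\hat f$ reduces to that for $f$) and your careful bookkeeping of the $n=1$ endpoints are implicit in the paper but spelled out more fully in your write-up.
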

\begin{proof}
The inequalities at the left follow from the Sobolev Embedding Theorem, and those at the right follow from Hölder inequality.
\end{proof}

We cannot improve the strict inequalities in \eqref{eq:thm:embedding_Sigma}, and we can use the examples $f(x) := \zeta(\abs{x}) \abs{x}^{-\frac{n}{2}-\delta}(\log\abs{x})^{-\frac{1}{2}-\varepsilon}$, for $0<\varepsilon<\delta/n$, where $\zeta\in C^\infty(\R)$ vanishes around zero. When $n=1$ and $\delta = 1/2$, it is known that $f$ may not be bounded.


\section{Dynamical, Fractional Uncertainty Principle}\label{sec:dynamical}

In this section we turn our attention to $h_\delta[f]$ in \eqref{eq:def_h_delta}. We begin with a lower bound for $h_\delta[f]$, and then we compute the Fourier transform of $h_\delta[f]$ away from the origin. In Section~\ref{sec:regularity} we determine the Hölder regularity of $h_\delta$ and the rate of decay of $\hat h_\delta$. 

\begin{customthm}{\ref{thm:Dynamical_UP}}
{\it
If $f\in\Sigma_\delta(\R^n)$, for $0<\delta<1$, and $\norm{f}_2 = 1$, then
\begin{equation}\label{eq:thm:Lower_Bound}
h_\delta[f](t) \ge \Big(\frac{a_\delta^2}{\norm{\abs{x}^\delta f}_2\norm{D^\delta f}_2}\Big)^2\max\Big(\norm{\abs{x}^\delta f}^2_2,\norm{D^\delta f}^2_2\abs{t}^{2\delta}\Big),
\end{equation} 
where $a_\delta$ is the constant in \eqref{eq:Heisenberg_delta}. Furthermore, for any $T \neq 0$
\begin{equation}\label{eq:thm:two_times}
h_\delta[f](0)h_\delta[f](T)\ge a_\delta^4\abs{T}^{2\delta},
\end{equation} 
with equality if and only if
\begin{equation}\label{eq:thm:Dynamical_minimizer}
f(x) = ce^{-\pi i\abs{x}^2/T}\lambda^{n/2}Q_\delta(\lambda x)
\end{equation} 
for some $\lambda>0$ and $\abs{c} = 1$.
}
\end{customthm}
\begin{proof}
The solution $u$ can be represented as
\begin{equation*}
u(x,t) = \frac{1}{(it)^\frac{n}{2}}e^{\pi i\abs{x}^2/t}\int f(y)e^{\pi i\abs{y}^2/t - 2\pi i x\cdot y/t}\,dy, \qquad\textrm{where } \real{\sqrt{it}}>0. 
\end{equation*}
If we define $g_t(y) := f(y)e^{\pi i\abs{y}^2/t}$, then the solution can be written as 
\begin{equation*}
u(x,t) = \frac{1}{(it)^\frac{n}{2}}e^{\pi i\abs{x}^2/t}\hat{g}_t(x/t).
\end{equation*}
By the uncertainty principle \eqref{eq:Heisenberg_delta} we have
\begin{equation*}
a_\delta^2 \le \norm{\abs{x}^\delta g_t}_2\norm{D^\delta g_t}_2 = \abs{t}^{-\delta}h_\delta(0)^\frac{1}{2}h_\delta(t)^\frac{1}{2},
\end{equation*}
with equality if and only if $g_t(x) = c\lambda^{n/2}Q_\delta(\lambda x)$ for some $\lambda>0$ and $\abs{c} = 1$, so \eqref{eq:thm:two_times} and \eqref{eq:thm:Dynamical_minimizer} hold. This inequality implies the lower bound
\begin{equation}\label{eq:h_Lower_A}
h_\delta(t) \ge \frac{a_\delta^4}{\norm{\abs{x}^\delta f}^2_2}\abs{t}^{2\delta}.
\end{equation}
On the other hand, again by \eqref{eq:Heisenberg_delta}, we have
\begin{equation*}
a_\delta^4 \le h_\delta(t)\int\abs{\xi}^{2\delta}\abs{\hat{u}(\xi,t)}^2\,d\xi = h_\delta(t)\int\abs{\xi}^{2\delta}\abs{\hat{f}(\xi)}^2\,d\xi,
\end{equation*}
which implies the lower bound
\begin{equation} \label{eq:h_Lower_B}
h_\delta(t) \ge \frac{a_\delta^4}{\norm{D^\delta f}^2_2}.
\end{equation}
From \eqref{eq:h_Lower_A} and \eqref{eq:h_Lower_B} we conclude that
\begin{equation*}
h_\delta(t) \ge \max\Big(\frac{a_\delta^4}{\norm{D^\delta f}^2_2},\frac{a_\delta^4}{\norm{\abs{x}^\delta f}^2_2}\abs{t}^{2\delta}\Big),
\end{equation*}
which is \eqref{eq:thm:Lower_Bound} after reordering.
\end{proof}


The computation of the Fourier transform of $h_\delta[f]$ is motivated by the oscillations that appear in numerical simulations when $f$ approaches the Dirac comb.

\begin{theorem}\label{thm:h_away_zero}
If $f\in\Sigma_\delta(\R^n)$, then the Fourier transform of $h_\delta[f]$ in $\R\setminus \{0\}$ can be represented as
\begin{equation}\label{eq:thm:Fourier_Transform_h}
\hat{h}_\delta(\tau) = -2b_{n,\delta}\int_{\R^{2n}}\hat{f}(\xi)\conj{\hat{f}}(\eta)\delta_0\Big(\tau-\frac{\abs{\eta}^2-\abs{\xi}^2}{2}\Big)\frac{d\xi d\eta}{\abs{\xi-\eta}^{n+2\delta}},
\end{equation}
where
\begin{equation*}
b_{n,\delta} = \frac{1}{2\pi^{n/2+2\delta}}\frac{\Gamma(\frac{n+2\delta}{2})}{\abs{\Gamma(-\delta)}}. 
\end{equation*} 
If $\varphi\in\mathcal{S}(\R)$ is supported outside the interval $(-a,a)$, then
\begin{equation} \label{eq:thm:Order_Fourier_h}
\abs{\inner{\hat{h}_\delta[f]}{\varphi}} \le C_a\norm{f}_{\Sigma_\delta}^2\norm{\varphi}_\infty.
\end{equation}
Furthermore,
\begin{equation} \label{eq:thm:Fourier_h_L1}
\norm{\hat h_\delta[f]}_{L^1(\R\setminus[-a,a])}\le C_a\norm{f}_{\Sigma_\delta}^2.
\end{equation}
\end{theorem}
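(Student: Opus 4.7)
The plan is first to derive the formula \eqref{eq:thm:Fourier_Transform_h} by explicit computation, and then to deduce both estimates from a single weighted bilinear bound. The main input is the distributional Fourier transform
\begin{equation*}
\widehat{\abs{x}^{2\delta}}(\xi) = -2b_{n,\delta}\abs{\xi}^{-n-2\delta}, \qquad 0<\delta<1,
\end{equation*}
which follows by analytic continuation of the classical identity $\widehat{\abs{x}^{-s}}(\xi) = \pi^{s-n/2}\Gamma(\tfrac{n-s}{2})/\Gamma(\tfrac{s}{2}) \cdot \abs{\xi}^{s-n}$ from the strip $0<s<n$ to $s=-2\delta$; the overall minus sign appears because $\Gamma(-\delta)<0$ on $(0,1)$, so $1/\Gamma(-\delta) = -1/\abs{\Gamma(-\delta)}$.

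Using $\hat u(\xi,t) = e^{-i\pi t\abs{\xi}^2}\hat f(\xi)$, I would expand
\begin{equation*}
\abs{u(x,t)}^2 = \int\int \hat f(\xi)\conj{\hat f(\eta)}\,e^{2\pi ix\cdot(\xi-\eta)}\,e^{i\pi t(\abs{\eta}^2-\abs{\xi}^2)}\,d\xi\,d\eta,
\end{equation*}
interchange the $x$, $\xi$, $\eta$ integrals in the definition of $h_\delta$, and recognize the inner $x$-integral as $\widehat{\abs{x}^{2\delta}}(\eta-\xi) = -2b_{n,\delta}\abs{\xi-\eta}^{-n-2\delta}$ by evenness of $\abs{x}^{2\delta}$. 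Taking the Fourier transform in $t$ converts $e^{i\pi t(\abs{\eta}^2-\abs{\xi}^2)}$ into $\delta_0(\tau - (\abs{\eta}^2-\abs{\xi}^2)/2)$, which gives \eqref{eq:thm:Fourier_Transform_h} after a Fubini argument that is legitimate once everything is paired with a test function $\varphi$ supported away from $\tau=0$.

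Both estimates then reduce to the single inequality
\begin{equation*}
I_a := \int_{\{\abs{\abs{\eta}^2-\abs{\xi}^2}>2a\}}|\hat f(\xi)||\hat f(\eta)|\abs{\xi-\eta}^{-n-2\delta}\,d\xi\,d\eta \le C_a \norm{f}_{\Sigma_\delta}^2,
\end{equation*}
since bounding $|\varphi|\le\norm{\varphi}_\infty$ delivers \eqref{eq:thm:Order_Fourier_h}, while integrating the delta in $\tau$ delivers \eqref{eq:thm:Fourier_h_L1}. I would prove $I_a\le C_a\norm{f}_{\Sigma_\delta}^2$ by a weighted Schur test: write $\hat f = \japan{\xi}^{-\delta}\cdot(\japan{\xi}^\delta\hat f)$ and observe that $\norm{\japan{\xi}^\delta\hat f}_2\lesssim\norm{f}_{\Sigma_\delta}$ by Lemma~\ref{thm:SigmaInL2}; the desired estimate then reduces to verifying $\sup_\xi \int \japan{\xi}^{-\delta}\japan{\eta}^{-\delta}\abs{\xi-\eta}^{-n-2\delta}\ind_{\{\abs{\abs{\eta}^2-\abs{\xi}^2}>2a\}}\,d\eta \le M_a$ together with the symmetric bound in $\xi\leftrightarrow\eta$.

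The main obstacle is the Schur estimate at the resonance $\abs{\eta}\approx\abs{\xi}$: the elementary inequality $\abs{\abs{\eta}^2-\abs{\xi}^2}\le \abs{\xi-\eta}(\abs{\xi}+\abs{\eta})$ converts the support restriction into the slab-exclusion $\abs{\hat\xi\cdot(\eta-\xi)}\gtrsim a/\abs{\xi}$. Integrating $\abs{\xi-\eta}^{-n-2\delta}$ over the complement of this slab in a small neighborhood of $\eta=\xi$ produces a factor of order $(\abs{\xi}/a)^{2\delta}$, which is precisely cancelled by the weights $\japan{\xi}^{-\delta}\japan{\eta}^{-\delta}\sim\abs{\xi}^{-2\delta}$, giving a uniform bound depending only on $a$ and $\delta$. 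The contributions from $\abs{\eta}$ much smaller or much larger than $\abs{\xi}$ are then handled by the direct decay of $\abs{\xi-\eta}^{-n-2\delta}\japan{\eta}^{-\delta}$.
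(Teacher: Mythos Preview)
Your proposal is correct and lands on the same core computation as the paper, though the packaging differs in a few places worth noting. For the formula, the paper starts from the Gagliardo identity
\[
h_\delta(t)=b_{n,\delta}\int\frac{\bigl|e^{-\pi it|\xi|^2}\hat f(\xi)-e^{-\pi it|\eta|^2}\hat f(\eta)\bigr|^2}{|\xi-\eta|^{n+2\delta}}\,d\xi\,d\eta,
\]
which is a convergent integral from the outset and makes the Fubini step immediate; your route via the distributional transform of $|x|^{2\delta}$ is equivalent but needs the regularization you allude to. For the estimate, the paper uses $2|\hat f(\xi)\hat f(\eta)|\le|\hat f(\xi)|^2+|\hat f(\eta)|^2$ rather than a weighted Schur test, but both reduce to bounding the same inner integral $J(\xi)=\int_{||\eta|^2-|\xi|^2|>2a}|\xi-\eta|^{-n-2\delta}\,d\eta\lesssim a^{-2\delta}|\xi|^{2\delta}$. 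Note also that your quoted inequality $\bigl||\eta|^2-|\xi|^2\bigr|\le|\xi-\eta|(|\xi|+|\eta|)$ yields the \emph{ball}-exclusion $|\xi-\eta|\gtrsim a/|\xi|$, not a slab; this is exactly what the paper (and your own integration) uses, so the argument goes through unchanged. Finally, for \eqref{eq:thm:Fourier_h_L1} your bound on $I_a$ gives the total-variation estimate, but the paper adds a short Riesz--Markov argument to check that $\hat h_\delta$ is absolutely continuous away from the origin; you should say a word about this (e.g.\ via the coarea formula, since $(\xi,\eta)\mapsto(|\eta|^2-|\xi|^2)/2$ is a submersion off the diagonal).
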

\begin{proof}
The computation is based on the identity (see \cite{zbMATH07000807})
\begin{equation*}
h_\delta(t) = b_{n,\delta}\int_{\R^n\times\R^n}\frac{\abs{e^{-\pi it\abs{\xi}^2}\hat{f}(\xi) - e^{-\pi it\abs{\eta}^2}\hat{f}(\eta)}^2}{\abs{\xi-\eta}^{n+2\delta}}\,d\xi d\eta, \quad\textrm{for } 0<\delta<1.
\end{equation*}

Let $\varphi\in\mathcal{S}(\R)$ be a test function that vanishes in the interval $(-a,a)$. We apply Fubini to write the Fourier transform of $h_\delta$ as
\begin{align}
\inner{h_\delta}{\hat{\varphi}} &= b_{n,\delta}\int_{\R^{2n}}\int \big[\abs{\hat{f}(\xi)}^2 + \abs{\hat{f}(\eta)}^2 - e^{-\pi it(\abs{\xi}^2-\abs{\eta}^2)}\hat{f}(\xi)\conj{\hat{f}}(\eta) - \notag \\
&\hspace*{4cm} - e^{-\pi it(\abs{\eta}^2-\abs{\xi}^2)}\conj{\hat{f}}(\xi)\hat{f}(\eta)\big]\hat{\varphi}(t)\,dt\frac{d\xi d\eta}{\abs{\xi-\eta}^{n+2\delta}} \label{eq:h_fourier_pairing_zero} \\
&= -b_{n,\delta}\int_{\R^{2n}}\Big[\hat{f}(\xi)\conj{\hat{f}}(\eta)\varphi\Big(\frac{\abs{\eta}^2-\abs{\xi}^2}{2}\Big) + \notag \\
&\hspace*{4cm} + \conj{\hat{f}}(\xi)\hat{f}(\eta)\varphi\Big(\frac{\abs{\xi}^2-\abs{\eta}^2}{2}\Big)\Big]\frac{d\xi d\eta}{\abs{\xi-\eta}^{n+2\delta}}, \label{eq:h_fourier_pairing}
\end{align}
We have to show that this integral represents a bounded functional in $\mathcal{S}(\R)$.

Assume that $a\le 1$. We bound the integral \eqref{eq:h_fourier_pairing} as
\begin{align}
\abs{\inner{h_\delta}{\hat{\varphi}}} &\le C\int \Big|\hat{f}(\xi)\conj{\hat{f}}(\eta)\varphi\Big(\frac{\abs{\eta}^2-\abs{\xi}^2}{2}\Big)\Big|\frac{d\xi d\eta}{\abs{\xi-\eta}^{n+2\delta}} \notag \\
&\lesssim \int\abs{\hat{f}(\xi)}^2\Big[\int\varphi\Big(\frac{\abs{\eta}^2-\abs{\xi}^2}{2}\Big)\frac{d\eta}{\abs{\xi-\eta}^{n+2\delta}}\Big]d\xi + \notag \\
&\hspace*{3.5cm}+ \int\abs{\hat{f}(\eta)}^2\Big[\int\varphi\Big(\frac{\abs{\eta}^2-\abs{\xi}^2}{2}\Big)\frac{d\xi}{\abs{\xi-\eta}^{n+2\delta}}\Big]d\eta  \notag \\
&:= C\int\abs{\hat{f}(\xi)}^2J(\xi)d\xi + C\int\abs{\hat{f}(\eta)}^2J'(\eta)d\eta, \label{eq:Int_f_J}
\end{align}
where $J$ and $J'$ are the integrals in square brackets. 

We only bound the first integral in \eqref{eq:Int_f_J}, the other being analogous; recall that $\varphi((\abs{\eta}^2-\abs{\xi}^2)/2) = 0$ if $\abs{\abs{\eta}^2-\abs{\xi}^2}< 2a$. When $\abs{\xi}> \sqrt{a}$ we control $J$ as
\begin{equation*}
J(\xi) \le \norm{\varphi}_\infty\int_{\R^n\setminus B(\xi,a/(2\abs{\xi}))}\frac{d\eta}{\abs{\xi-\eta}^{n+2\delta}}\le Ca^{-2\delta}\norm{\varphi}_\infty\abs{\xi}^{2\delta},
\end{equation*}
and when $\abs{\xi}<\sqrt{a}$ we integrate instead over $\R^n\setminus B(\xi,\sqrt{a}/2)$. The final result is
\begin{equation}\label{eq:thm_fourier_h_J}
J(\xi) \le C\norm{\varphi}_\infty\begin{cases}
a^{-2\delta}\abs{\xi}^{2\delta} & \abs{\xi}> \sqrt{a}, \\
a^{-\delta} & \abs{\xi}< \sqrt{a}.
\end{cases}
\end{equation} 

\begin{center}
\includegraphics[scale=0.7]{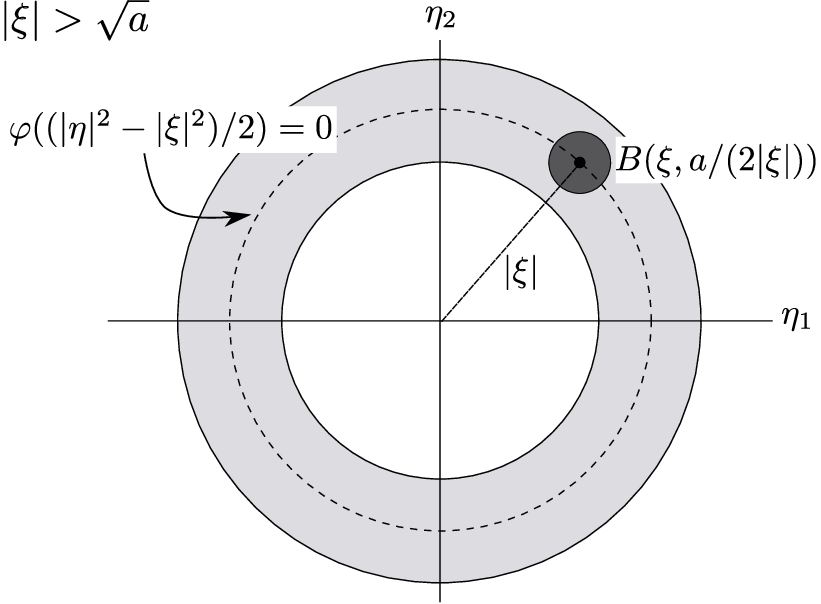}
\end{center}

We replace \eqref{eq:thm_fourier_h_J} in \eqref{eq:Int_f_J} and use the inclusion $\Sigma_\delta\xhookrightarrow{}L^2$ to conclude that
\begin{equation*}
\abs{\inner{h_\delta}{\hat{\varphi}}} \le Ca^{-2\delta}\norm{\varphi}_\infty\norm{f}_{\Sigma_\delta}^2,
\end{equation*}
which is \eqref{eq:thm:Order_Fourier_h}.

Since $\mathcal{S}(\R)$ is dense in the space of continuous functions that vanish at infinity, then from \eqref{eq:thm:Order_Fourier_h} and the Riesz-Markov Theorem we can see $\hat h_\delta$ as a (signed) regular measure in $\R\setminus[-a,a]$ with total variation $\le C_a\norm{f}_{\Sigma_\delta}^2$.

The measure $\hat h_\delta$ is actually an $L^1$-function away from the origin. If $U\subset\R\setminus[-a,a]$ is an open set, then we can  approximate monotonically $\ind_U$ with Schwartz functions $\varphi$ such that $0\le\varphi\le 1$ and $\supp\varphi\subset U$, so by dominated convergence we can write
\begin{equation*}
\inner{\hat h_\delta}{\ind_U} = -2b_{n,\delta}\int_{\R^{2n}}\hat{f}(\xi)\conj{\hat{f}}(\eta)\ind_U\Big(\frac{\abs{\eta}^2-\abs{\xi}^2}{2}\Big)\frac{d\xi d\eta}{\abs{\xi-\eta}^{n+2\delta}}.
\end{equation*}
Since $\hat h_\delta$ is a regular measure, we can actually extend this identity from $\ind_U$ to all bounded, Borel measurable functions. If $A\subset\R\setminus[-a,a]$ is a bounded, Borel set with $\abs{A} = 0$, then we can apply this identity to $\psi\ind_A$, for $\abs{\psi}\le 1$, to conclude that $\hat h_\delta$ is absolutely continuous away from the origin.
\end{proof}

\begin{corollary}\label{corol:h_continuous}
The function $h_\delta$ is continuous.
\end{corollary}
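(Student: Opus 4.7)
The plan is to derive continuity of $h_\delta$ directly from the Fourier-side structure supplied by Theorem~\ref{thm:h_away_zero}. First I observe that, thanks to the Nahas--Ponce bound \eqref{eq:Nahas-Ponce}, $h_\delta$ is a locally bounded function of polynomial growth, hence a tempered distribution on $\R$, so $\hat h_\delta$ is well-defined as such.

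Next I fix a cutoff $\chi \in C^\infty_0(\R)$ with $\chi \equiv 1$ on $[-1/2,1/2]$ and $\supp\chi \subset (-1,1)$, and split $\hat h_\delta = \chi\hat h_\delta + (1-\chi)\hat h_\delta$. The compactly supported tempered distribution $\chi\hat h_\delta$ has, by Paley--Wiener, an inverse Fourier transform that is real-analytic and of polynomial growth on $\R$, in particular continuous. The piece $(1-\chi)\hat h_\delta$ vanishes on $[-1/2,1/2]$, so the estimate \eqref{eq:thm:Fourier_h_L1} applied with $a = 1/2$ places it in $L^1(\R)$, and the Riemann--Lebesgue lemma then gives a continuous bounded inverse Fourier transform. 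Summing the two pieces, $h_\delta$ coincides with a continuous function on $\R$ at the level of tempered distributions.

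The remaining small subtlety is to identify this continuous representative with the pointwise definition $h_\delta(t) = \int|x|^{2\delta}|u(x,t)|^2\,dx$ at every $t$. I would settle this by a short density argument: using Lemma~\ref{thm:smooth_dense}, pick $f_k \in C^\infty_0$ with $f_k \to f$ in $\Sigma_\delta$; for each $k$ the solution $u_k$ is Schwartz with seminorms depending polynomially on $t$, so $h_\delta[f_k]$ is pointwise continuous by dominated convergence. Applying \eqref{eq:Nahas-Ponce} to $f-f_k$ together with a Cauchy--Schwarz expansion of $|u|^2-|u_k|^2$ yields $\sup_{|t|\le T}|h_\delta[f](t) - h_\delta[f_k](t)| \to 0$, so the pointwise $h_\delta[f]$ is a uniform limit on compacts of continuous functions, hence continuous. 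The main input is the $L^1$-integrability of $\hat h_\delta$ away from the origin from Theorem~\ref{thm:h_away_zero}; beyond routine appeals to Paley--Wiener and Riemann--Lebesgue, the only mild obstacle I expect is this reconciliation between the distributional and pointwise versions, which the density argument handles cleanly.
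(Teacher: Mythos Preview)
Your proof is correct and follows essentially the same approach as the paper: split $h_\delta$ into a low-frequency piece (compactly supported Fourier transform, hence analytic) and a high-frequency piece (Fourier transform in $L^1$ by \eqref{eq:thm:Fourier_h_L1}, hence continuous). Your additional density argument reconciling the distributional and pointwise definitions is a nice extra bit of care that the paper's two-line proof leaves implicit.
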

\begin{proof}
We split $h_\delta$ into $P_{<1}h_\delta$ (an analytic function) and $P_{>1}h_\delta$. By \eqref{eq:thm:Fourier_h_L1} $(P_{>1}h_\delta)^\wedge\in L^1(\R)$ and the claim follows.
\end{proof}

\subsection{Regularity of $h_\delta$}\label{sec:regularity}

Corollary \ref{corol:h_continuous} says that $h_\delta$ is continuous; however, we can improve our estimates and refine the information about regularity.

The space of Lipschitz functions $\Lambda^\alpha(\R^n)$, for $\alpha>0$, is
\begin{equation}\label{eq:def:Lipschitz}
\begin{split}
\Lambda^\alpha(\R^n) &:= \{f\in L^\infty(\R^n) \mid \\
&\hspace*{1.5cm} \norm{P_{[2^k,2^{k+1}]}f}_\infty\le C2^{-\alpha k},\textrm{ for } k\ge 0, \textrm{ and } \norm{P_{[0,1]}f}_\infty\le C\}. 
\end{split}
\end{equation}
If $f\in\Lambda^\alpha(\R)$, for $0<\alpha<1$, then $\abs{f(x)-f(y)}\le C\abs{x-y}^\alpha$; see Ch. V.4 of \cite{MR0290095}. 

\begin{theorem}\label{thm:Lipschitz}
If $f\in \Sigma_\delta(\R^n)$, for $0<\delta<1$, then
\begin{equation}\label{eq:thm:regularity}
\norm{\psi h_\delta[f]}_{\Lambda^\alpha}\lesssim C_\psi\norm{f}^2_{\Sigma_\delta}
\end{equation}
where $\psi\in C^\infty_0(\R)$ and
\begin{equation*}
\alpha = \begin{cases}
2\delta & \textrm{for } n\ge 2, \textrm{ or for } n=1 \textrm{ and } \delta<\frac{1}{2}, \\
1- & \textrm{for } n=1 \textrm{ and } \delta=\frac{1}{2}, \\ 
\frac{1}{4}+\frac{3}{2}\delta & \textrm{for } n=1 \textrm{ and } \delta>\frac{1}{2}.
\end{cases}
\end{equation*}
The result is the best possible --- up to the end point in the case $n=1$ and $\delta=\frac{1}{2}$. In particular, $h_\delta\in C^1_\textrm{loc}(\R)$ when $\delta>\frac{1}{2}$.
\end{theorem}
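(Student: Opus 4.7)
The plan is to combine the Fourier representation of Theorem~\ref{thm:h_away_zero} with a Littlewood--Paley decomposition of $\psi h_\delta$. Since $\|\cdot\|_{\Lambda^\alpha}$ is defined dyadically in \eqref{eq:def:Lipschitz}, it suffices to prove
\[
\|P_{[2^k,2^{k+1}]}(\psi h_\delta)\|_\infty \le C_\psi 2^{-\alpha k}\|f\|_{\Sigma_\delta}^2,\qquad k\ge 0,
\]
together with a trivial bound for the low-frequency piece. The role of $\psi$ is only to tame the origin of $\hat h_\delta$: because $(\psi h_\delta)^\wedge=\hat\psi*\hat h_\delta$ and $\hat\psi$ is Schwartz, for $\xi$ at scale $2^k$ the convolution concentrates on $\tau$ at scale $2^k$, where Theorem~\ref{thm:h_away_zero} already guarantees that $\hat h_\delta$ is an $L^1$ function and the estimate \eqref{eq:thm:Fourier_h_L1} applies.

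The core step is to sharpen \eqref{eq:thm:Fourier_h_L1} at the level of a single dyadic annulus $I=[2^k,2^{k+1}]$, or, when it is useful, to replace the $L^1$ bound by an $L^2$ one combined with Bernstein's inequality $\|P_Ig\|_\infty\lesssim|I|^{1/2}\|\hat g\|_{L^2(I)}$. Starting from \eqref{eq:thm:Fourier_Transform_h} with $\varphi=\ind_I$ and Cauchy--Schwarz in $\xi,\eta$, the problem reduces to controlling
\[
J_a(\xi):=\int_{(|\eta|^2-|\xi|^2)/2\in I}\frac{d\eta}{|\xi-\eta|^{n+2\delta}},\qquad a:=2^k,
\]
since $\|\hat h_\delta\|_{L^1(I)}\lesssim\int|\hat f(\xi)|^2J_a(\xi)\,d\xi$. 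Splitting the constraint surface into its near-$\xi$ and near-$-\xi$ components, exactly as in the proof of Theorem~\ref{thm:h_away_zero}, yields $J_a(\xi)\lesssim a^{-2\delta}|\xi|^{2\delta}$ for $|\xi|\gtrsim\sqrt a$ and $J_a(\xi)\lesssim a^{-\delta}$ for $|\xi|\lesssim\sqrt a$. The high-$\xi$ regime contributes the expected $a^{-2\delta}\|D^\delta f\|_2^2$.

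The delicate point is the low-$\xi$ regime. For $n\ge 2$ or $n=1$ with $\delta<1/2$, the Sobolev endpoint in Lemma~\ref{thm:embedding_Sigma} provides $\hat f\in L^{2n/(n-2\delta)}$, and combining H\"older with $|B(0,\sqrt a)|^{1-2/p}\sim a^\delta$ exactly compensates the factor $a^{-\delta}$ from $J_a$ and produces the total bound $a^{-2\delta}\|f\|_{\Sigma_\delta}^2$, hence $\alpha=2\delta$. In one dimension with $\delta\ge 1/2$ this mechanism breaks, because $2n/(n-2\delta)$ degenerates and the naive argument only yields $\alpha=\delta$; the way out is to switch to the $L^2$/Bernstein bound and to estimate $\|\hat h_\delta\|_{L^2(I)}$ using the additional regularity $\hat f\in C^{\delta-1/2}$ that follows from $f\in\Sigma_\delta$ by Sobolev embedding. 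Interpolating this $L^2$ bound against the $L^1$ bound from the high-$\xi$ regime produces the announced exponent $\tfrac14+\tfrac32\delta$, while at the borderline $\delta=1/2$ the failure of $H^{1/2}\hookrightarrow L^\infty$ costs an arbitrary $\varepsilon$.

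The main technical obstacle lies precisely in this interpolation in the one-dimensional case $\delta\ge 1/2$, where one must carefully balance the high-frequency $L^1$ bound against the low-frequency $L^2$/Bernstein bound while keeping track of every exponent. Sharpness is then verified by testing \eqref{eq:thm:regularity} against Gaussian wave packets frequency-localized at the critical scale $\sqrt a$: these saturate $J_a$ in the low-$\xi$ regime and force the claimed exponents to be optimal, up to the logarithmic loss at $\delta=1/2$. Finally, the last assertion $h_\delta\in C^1_{\textrm{loc}}$ for $\delta>1/2$ follows from the embedding $\Lambda^\alpha\subset C^1$ for $\alpha>1$ and the elementary observation $\tfrac14+\tfrac32\delta>1\iff\delta>\tfrac12$.
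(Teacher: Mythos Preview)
Your overall architecture is right, but there is a concrete gap in the low-frequency estimate that breaks the argument in every case.

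After your Cauchy--Schwarz step you arrive at
\[
\|\hat h_\delta\|_{L^1(I)}\lesssim\int|\hat f(\xi)|^2 J_a(\xi)\,d\xi,
\]
and in the regime $|\xi|\lesssim\sqrt a$ you correctly have $J_a(\xi)\lesssim a^{-\delta}$. Your H\"older step with $p=2n/(n-2\delta)$ then gives
\[
\int_{|\xi|<\sqrt a}|\hat f(\xi)|^2 J_a(\xi)\,d\xi
\;\lesssim\; a^{-\delta}\,\|\hat f\|_p^2\,|B(0,\sqrt a)|^{1-2/p}
\;\sim\; a^{-\delta}\cdot a^{\delta}\,\|f\|_{\Sigma_\delta}^2
\;=\;\|f\|_{\Sigma_\delta}^2,
\]
which is $O(1)$ and \emph{not} $O(a^{-2\delta})$. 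The ``exact compensation'' you invoke therefore produces no decay at all, and the low-$\xi$ piece dominates the high-$\xi$ piece $a^{-2\delta}\|D^\delta f\|_2^2$. The loss comes from the symmetrization $|\hat f(\xi)\hat f(\eta)|\le\tfrac12(|\hat f(\xi)|^2+|\hat f(\eta)|^2)$: in the region $|\xi|\ll\sqrt a$, $|\eta|\simeq\sqrt a$ the two factors live at very different scales, and averaging their squares throws away the smallness coming from $\hat f$ on the high annulus.

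The paper repairs this by \emph{not} symmetrizing in that region. Since $|\xi-\eta|\simeq\sqrt a$ there, the kernel is essentially constant and the double integral factors:
\[
\lambda^{\frac n2-\delta}\!\!\int_{|\xi|<1,\,|\eta|\simeq1}\!\!|\hat f(\sqrt\lambda\xi)\hat f(\sqrt\lambda\eta)|\,d\xi\,d\eta
\;\lesssim\;\lambda^{-\frac n2-\delta}\Big(\int_{|\xi|<\sqrt\lambda}|\hat f|\Big)\Big(\int_{|\eta|\simeq\sqrt\lambda}|\hat f|\Big).
\]
Cauchy--Schwarz on the $\eta$-integral alone yields the extra factor $\lambda^{n/4-\delta/2}$, and then a weighted Cauchy--Schwarz on $\int_{|\xi|<\sqrt\lambda}|\hat f|$ (splitting $|\xi|<1$ versus $1<|\xi|<\sqrt\lambda$ with weight $|\xi|^{-2\delta}$) gives in one stroke all three exponents $2\delta$, $1-$ and $\tfrac14+\tfrac32\delta$, with no need for any $L^2$/Bernstein interpolation or the pointwise regularity $\hat f\in C^{\delta-1/2}$.

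Your sharpness sketch is also too loose: the paper does not use wave packets but the explicit power examples $\hat f_\alpha(\xi)=\langle\xi\rangle^{-\alpha}$ with $\alpha=\tfrac n2+\delta+$, for which $\hat h_\delta\le 0$ and a direct lower bound on $|\hat h_\delta(\tau)|$ pointwise gives the matching decay rate.
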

\begin{proof}
Since $P_{\le 1}h_\delta$ and its derivatives are bounded in compact sets by the Nahas-Ponce inequality \eqref{eq:Nahas-Ponce}, then it suffices to prove that $P_{\ge 1}h_\delta\in \Lambda^\alpha(\R)$. Since $h_\delta$ is real, then $\hat h_\delta(\tau) = \conj{\hat h}_\delta(-\tau)$ and we only need to work with positive frequencies. Hence, by the Hausdorff-Young inequality, it suffices to prove
\begin{equation*}
\norm{\hat{h}_\delta}_{L^1(\tau\simeq 2^k)} \le C\norm{f}^2_{\Sigma_\delta} \begin{cases}
2^{-2\delta k} & \textrm{for } n\ge 2, \textrm{ or for } n=1 \textrm{ and } \delta\le\frac{1}{2} \\
2^{-(\frac{1}{4}+\frac{3}{2}\delta)k} & \textrm{for } n=1 \textrm{ and } \delta>\frac{1}{2}.
\end{cases}
\end{equation*}

We define $I_\lambda := [\lambda,2\lambda]$, for $\lambda\ge 1$, and re-scale \eqref{eq:h_fourier_pairing} to get, for $\abs{g}\le 1$,
\begin{equation}\label{eq:h_regularity_Holder_C}
\abs{\inner{\hat{h}_\delta}{g\ind_{I_\lambda}}} \le C\lambda^{\frac{n}{2}-\delta}\int_{\R^{2n}}\abs{\hat{f}(\sqrt{\lambda}\xi)\hat{f}(\sqrt{\lambda}\eta)}\ind_I\Big(\frac{\abs{\eta}^2-\abs{\xi}^2}{2}\Big)\frac{d\xi d\eta}{\abs{\xi-\eta}^{n+2\delta}},
\end{equation}
where $I = [1,2]$.

To bound the integral over the region $\{\abs{\xi}>1\}$, we begin with
\begin{align*}
\abs{\inner{\hat{h}_\delta}{g\ind_{I_\lambda}}}_{\{\abs{\xi}>1\}} &\le C\lambda^{\frac{n}{2}-\delta}\int_{\abs{\xi}>1}\abs{\hat f(\sqrt{\lambda}\xi)}^2\Big[\int \ind_{I\cup -I}\Big(\frac{\abs{\eta}^2-\abs{\xi}^2}{2}\Big)\frac{d\eta}{\abs{\xi-\eta}^{n+2\delta}}\Big]\,d\xi \\
&:= C\lambda^{\frac{n}{2}-\delta}\int_{\abs{\xi}>1}\abs{\hat f(\sqrt{\lambda}\xi)}^2J(\xi)\,d\xi;
\end{align*}
compare with \eqref{eq:Int_f_J}. We use \eqref{eq:thm_fourier_h_J}, for $a=1$, to find out
\begin{align}
\abs{\inner{\hat{h}_\delta}{g\ind_{I_\lambda}}}_{\{\abs{\xi}>1\}} &\le C\lambda^{\frac{n}{2}-\delta}\int\abs{\hat{f}(\sqrt{\lambda}\xi)}^2\abs{\xi}^{2\delta}\,d\xi \notag \\
&\le C\lambda^{-2\delta}\norm{f}_{\Sigma_\delta}^2.\label{eq:h_regularity_Holder_B}
\end{align}

To bound the integral over the region $\{\abs{\xi}<1\}$, we begin with \eqref{eq:h_regularity_Holder_C} and notice that the factor $\ind_I((\abs{\eta}^2-\abs{\xi}^2)/2)$ forces $\abs{\eta}\simeq 1$. Hence,
\begin{align}
\abs{\inner{\hat{h}_\delta}{g\ind_{I_\lambda}}}_{\{\abs{\xi}<1\}} &\le C\lambda^{\frac{n}{2}-\delta}\int_{\abs{\xi}< 1, \abs{\eta}\simeq 1}\abs{\hat{f}(\sqrt{\lambda}\xi)\hat{f}(\sqrt{\lambda}\eta)}\,d\xi d\eta \notag \\
&\le C\lambda^{-\frac{n}{2}-\delta}\int_{\abs{\xi}< \sqrt{\lambda}}\abs{\hat{f}(\xi)}\,d\xi\,\int_{\abs{\eta}\simeq \sqrt{\lambda}}\abs{\hat{f}(\eta)}\,d\eta \notag \\
&\le C\lambda^{-\frac{n}{4}-\frac{3}{2}\delta} \Big(\int_{\abs{\xi}< \sqrt{\lambda}}\abs{\hat{f}(\xi)}\,d\xi\Big)\norm{\abs{\eta}^\delta\hat{f}}_2 \label{eq:h_regularity_Holder_A}.
\end{align}
We control the term in parentheses as
\begin{multline*}
\int_{\abs{\xi}< \sqrt{\lambda}}\abs{\hat{f}(\xi)}\,d\xi \lesssim \Big(\int_{\abs{\xi}<1}\abs{\hat f(\xi)}^2\,d\xi\Big)^\frac{1}{2} + \\
+ \Big(\int_{1<\abs{\xi}<\sqrt{\lambda}}\abs{\xi}^{-2\delta}\,d\xi\Big)^\frac{1}{2}\Big(\int_{1<\abs{\xi}<\sqrt{\lambda}}\abs{\xi}^{2\delta}\abs{\hat f(\xi)}^2\,d\xi\Big)^\frac{1}{2};
\end{multline*}
after replacing in \eqref{eq:h_regularity_Holder_A} we arrive to
\begin{equation*}
\abs{\inner{\hat{h}_\delta}{g\ind_{I_\lambda}}}_{\{\abs{\xi}<1\}} \lesssim \norm{f}_{\Sigma_\delta}^2
\begin{cases}
\lambda^{-2\delta} & \textrm{for } n\ge 2, \textrm{ or } n = 1 \textrm{ and } \delta<\frac{1}{2}, \\
\lambda^{-1}\sqrt{\log \lambda} & \textrm{for } n = 1 \textrm{ and } \delta=\frac{1}{2}, \\
\lambda^{-\frac{1}{4}-\frac{3}{2}\delta} & \textrm{for } n = 1 \textrm{ and } \delta>\frac{1}{2},
\end{cases}
\end{equation*}
which together with \eqref{eq:h_regularity_Holder_B} implies \eqref{eq:thm:regularity}---notice that $\frac{1}{4}+\frac{3}{2}\delta<2\delta$.
\\[0.2cm]
\indent\textit{Sharpness of the regularity}
\\[0.2cm]
We consider functions $\hat{f}_\alpha(\xi) := \japan{\xi}^{-\alpha}$, for $\alpha = \frac{n}{2}+\delta +$. The Fourier transform of $h_\delta[f]$ is symmetric and, for $\tau>0$, it equals
\begin{align*}
\hat{h}_\delta(\tau) &= -2b_{n,\delta}\tau^{\frac{n}{2}-1-\delta}\int \hat{f}_\alpha(\sqrt{\tau}\xi)\hat{f}_\alpha(\sqrt{\tau}\eta)\delta\Big(1-\frac{\abs{\eta}^2-\abs{\xi}^2}{2}\Big)\frac{d\xi d\eta}{\abs{\xi-\eta}^{n+2\delta}}
\end{align*}

Since $\hat{h}_\delta\le 0$, it is enough to prove that $\abs{\hat{h}_\delta(\tau)}\ge c\tau^{-1-\beta}$ for $\abs{\tau}\gg 1$, where
\begin{equation}\label{eq:Lipschitz_beta}
\beta = \begin{cases}
2\delta+ & \textrm{for } n\ge 2, \textrm{ or for } n=1 \textrm{ and } \delta<\frac{1}{2} \\
\frac{1}{4}+\frac{3}{2}\delta+ & \textrm{for } n=1 \textrm{ and } \delta\ge\frac{1}{2}.
\end{cases}
\end{equation}
In fact, if $\{\zeta_I\}$ is a cut-off function of $I := \{2^k\le\abs{\tau}\le 2^{k+1}\}$, then 
\begin{equation*}
c2^{-\beta k} \le \norm{\zeta_I\hat{h}_\delta}_1 = \abs{P_I h_\delta(0)}\le \norm{P_I h_\delta}_\infty,
\end{equation*}
and $\hat h_\delta\notin \Lambda^{\beta+}(\R)_{\textrm{loc}}$.

We use spherical coordinates and bound $\hat{h}_\delta$ from below as
\begin{equation}\label{eq:Lipschitz_exampla_A}
\begin{split}
\abs{\hat{h}_\delta(\tau)} &\ge c\tau^{\frac{n}{2}-1-\delta}\int_{\tau^{-\frac{1}{2}}<r_1<1} \hat{f}_\alpha(\sqrt{\tau}r_1)\hat{f}_\alpha(\sqrt{\tau}r_2)\delta\Big(1-\frac{r_2^2-r_1^2}{2}\Big) \\
&\hspace*{3cm} \Big[\int_{S^{n-1}\times S^{n-1}}\frac{d\theta_1d\theta_2}{\abs{r_1\theta_1-r_2\theta_2}^{n+2\delta}}\Big] r_1^{n-1}r_2^{n-1}dr_1dr_2.
\end{split}
\end{equation}
We denote by $J(r_1,r_2)$ the term inside parentheses; by rotational symmetry
\begin{equation*}
J(r_1,r_2) = c\int_{S^{n-1}}\frac{d\theta}{\abs{r_1e_n - r_2\theta}^{n+2\delta}}.
\end{equation*} 
The term $\delta(1-(r_2^2-r_1^2)/2)$ forces $r_2\simeq 1$, so $J(r_1,r_2)\gtrsim 1$, and from \eqref{eq:Lipschitz_exampla_A} we deduce
\begin{align*}
\abs{\hat{h}_\delta(\tau)} &\ge c\tau^{\frac{n}{2}-1-\delta}\int_{\tau^{-\frac{1}{2}}}^1\frac{1}{\tau^\alpha r_1^\alpha}\int_0^\infty\delta\Big(1-\frac{r_2^2-r_1^2}{2}\Big)\,dr_2\, r_1^{n-1}dr_1 \\
&\ge c\tau^{\frac{n}{2}-1-\delta-\alpha}\int_{\tau^{-\frac{1}{2}}}^1 r_1^{n-1-\alpha}\,dr_1
\end{align*}
Since $\alpha = \frac{n}{2}+\delta+$, we conclude, for $\abs{\tau}\gg 1$, that 
\begin{equation*}
\abs{\hat{h}_\delta(\tau)} \ge c
\begin{cases}
\tau^{-1-2\delta-} & \textrm{for } n\ge 2, \textrm{ or for } n=1 \textrm{ and } \delta<\frac{1}{2} \\
\tau^{-1-\frac{1}{4}-\frac{3}{2}\delta-} & \textrm{for } n=1 \textrm{ and } \delta\ge \frac{1}{2},
\end{cases}
\end{equation*}
which implies \eqref{eq:Lipschitz_beta}.

As a final remark, if $f$ is one of the examples we used, then $h_\delta[f]$, which is an even function, has a singularity at zero of the form $\abs{t}^\rho$. By translation in time, we can place the singularity at any other time.
\end{proof}

We may compare the regularity of $h_\delta$ with its classical counterpart $h_\delta^\textrm{c}[x,\xi](t) := \abs{x+t\xi}^{2\delta}$, which belongs to $\Lambda^{2\delta}_\textrm{loc}(\R)$. If $n\ge 2$ then $h_\delta^\textrm{c}$ is smooth for a general choice of $x$ and $\xi$, but if $n=1$ then $h_\delta^\textrm{c}$ has a singularity for a general choice, which agrees with the loss of regularity in Theorem~\ref{thm:Lipschitz} when $n = 1$.

In the following theorem we investigate the rate of decay of $\hat{h}_\delta$. However, first we have to prove an auxiliary result.

\begin{lemma}\label{thm:aux_decay_h}
Let $n\ge 1$ and let $r_1$ and $r_2$ be different, positive numbers. If  $\alpha > n-1$ and $A,B\in L^2(S^{n-1})$, then
\begin{equation}
\int_{S^{n-1}\times S^{n-1}} \frac{A(\theta_1)B(\theta_2)\,d\theta_1d\theta_2}{\abs{r_1\theta_1-r_2\theta_2}^\alpha} \le C_{r_1,r_2}\norm{A}_{L^2(S^{n-1})}\norm{B}_{L^2(S^{n-1})}
\end{equation}
where
\begin{equation*}
C_{r_1,r_2} \lesssim
\begin{cases}
1/r_1^\alpha & \textrm{for }\; r_1>2r_2, \\
1/r_2^\alpha & \textrm{for }\; r_2>2r_1, \\
(r_1r_2)^{-\frac{n-1}{2}}\abs{r_1-r_2}^{n-1-\alpha} & \textrm{for }\; \frac{1}{2}r_2<r_1<2r_2.
\end{cases}
\end{equation*}
\end{lemma}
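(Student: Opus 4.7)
My plan is to reduce the bilinear estimate to a one-variable spherical integral by Schur's test, and then estimate that integral in each of the three radial regimes separately.

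Since $|r_1 e_n - r_2 \theta|^2 = r_1^2 + r_2^2 - 2 r_1 r_2\, e_n\!\cdot\!\theta = |r_2 e_n - r_1\theta|^2$, the quantity
\begin{equation*}
M(r_1,r_2) := \int_{S^{n-1}} \frac{dS(\theta)}{|r_1 e_n - r_2\theta|^\alpha}
\end{equation*}
is independent of the distinguished vector $e_n$ (by rotational invariance) and also satisfies $M(r_1,r_2) = M(r_2,r_1)$. Hence for every fixed $\theta_1$ one has $\int_{S^{n-1}} |r_1\theta_1 - r_2 \theta_2|^{-\alpha}\,d\theta_2 = M(r_1,r_2)$ and symmetrically in $\theta_1$. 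Schur's test with constant test functions then gives
\begin{equation*}
\int_{S^{n-1}\times S^{n-1}}\frac{A(\theta_1) B(\theta_2)\, d\theta_1 d\theta_2}{|r_1\theta_1-r_2\theta_2|^\alpha} \le M(r_1,r_2)\,\norm{A}_{L^2(S^{n-1})}\norm{B}_{L^2(S^{n-1})},
\end{equation*}
so the lemma reduces to the pointwise estimate $M(r_1,r_2)\le C_{r_1,r_2}$.

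To evaluate $M$ I would use the law of cosines in the form $|r_1 e_n - r_2 \theta|^2 = (r_1-r_2)^2 + 4 r_1 r_2 \sin^2(\phi/2)$, where $\phi\in[0,\pi]$ is the angle between $e_n$ and $\theta$, and parametrize by $dS(\theta) = \omega_{n-2}\sin^{n-2}\phi\,d\phi$ (the case $n=1$ is a finite sum and is immediate). When $r_1\ge 2 r_2$ the denominator is uniformly $\gtrsim r_1^\alpha$, and integrating the bounded factor $\sin^{n-2}\phi$ produces the bound $1/r_1^\alpha$; the case $r_2\ge 2r_1$ is symmetric.

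The hard case is $r_1\simeq r_2$, where the kernel becomes singular as $\phi\to 0$. Set $r := \sqrt{r_1 r_2}$ and $h := |r_1-r_2|$, so $h\lesssim r$ in this regime. The contribution of $\phi\in[\pi/2,\pi]$ is $\lesssim r^{-\alpha}$, which is absorbed into the target bound $r^{-(n-1)} h^{n-1-\alpha}$ because $\alpha>n-1$ and $h\le r$. For $\phi\in[0,\pi/2]$ I would use $\sin\phi\simeq\phi$ and the rescaling $\phi = h u/r$, turning the integral into
\begin{equation*}
\frac{h^{n-1-\alpha}}{r^{n-1}}\int_0^{C r/h}\frac{u^{n-2}\,du}{(1+u^2)^{\alpha/2}},
\end{equation*}
and the hypothesis $\alpha > n-1$ is exactly what is needed for the $u$-integral to converge uniformly in its upper limit. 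The main technical point is precisely this rescaling and the role of the hypothesis $\alpha > n-1$ in taming the near-diagonal singularity; the estimates in the two unbalanced regimes are elementary.
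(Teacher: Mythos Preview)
Your proposal is correct and follows essentially the same route as the paper: both reduce the bilinear estimate to the single spherical integral $M(r_1,r_2)=\int_{S^{n-1}}|r_1e_n-r_2\theta|^{-\alpha}\,dS(\theta)$ via a Schur-type argument (the paper does it through $AB\le\tfrac12 A^2+\tfrac12 B^2$, you name Schur's test explicitly), and then bound $M$ in the three radial regimes.

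The only genuine difference is in the near-diagonal computation. The paper proves the pointwise lower bound $|\rho\theta-e_n|\gtrsim\max\{\rho|\theta-e_n|,\rho-1\}$ (with $\rho=r_1/r_2$) by an elementary algebraic inequality and then splits the $\theta$-integral into $\{\rho|\theta-e_n|\gtrless\rho-1\}$. You instead use the law-of-cosines identity $|r_1e_n-r_2\theta|^2=(r_1-r_2)^2+4r_1r_2\sin^2(\phi/2)$ and the rescaling $\phi=hu/r$, which produces the convergent integral $\int u^{n-2}(1+u^2)^{-\alpha/2}\,du$ directly. Both arguments are short; yours makes the role of the hypothesis $\alpha>n-1$ slightly more transparent, while the paper's avoids the angular parametrization altogether.
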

\begin{proof}
We assume that $r_2<r_1$ and that $\norm{A}_2 = \norm{B}_2 = 1$, so
\begin{align*}
\int \frac{A(\theta_1)B(\theta_2)\,d\theta_1d\theta_2}{\abs{r_1\theta_1-r_2\theta_2}^\alpha}&\le \frac{1}{2}\sup_{\theta_1}\int\frac{d\theta_2}{\abs{r_1\theta_1-r_2\theta_2}^\alpha}+\frac{1}{2}\sup_{\theta_2}\int\frac{d\theta_1}{\abs{r_1\theta_1-r_2\theta_2}^\alpha} \\
&= \frac{1}{r_2^\alpha}\int_{S^{n-1}}\frac{d\theta}{\abs{\rho\theta-e_n}^\alpha},
\end{align*}
where $\rho := r_1/r_2>1$.

When $\rho\ge 2$, we notice that $\abs{\rho\theta-e_n}\ge \rho/2$, so 
\begin{equation*}
\int\frac{d\theta}{\abs{\rho\theta-e_n}^\alpha}\le C\frac{1}{\rho^\alpha},
\end{equation*}
which implies $C_{r_1,r_2}\lesssim 1/r_1^\alpha$, for $r_1>2r_2$.

When $\rho< 2$, we notice that 
\begin{align*}
\abs{\rho(\theta-e_n)+(\rho-1)e_n}^2 &= 2\rho^2(\theta_n-1)^2 + (\rho-1)^2 + 2\rho(\rho-1)(\theta_n-1) \\
&\ge 2\rho^2(\theta_n-1)^2 + (\rho-1)^2 - a\rho^2(\theta_n-1)^2 - a^{-1}(\rho-1)^2,
\end{align*}
so we can take either $a = 2$ or $a = 1$ to see $\abs{\rho\theta-e_n}\ge c\max\{\rho\abs{\theta-e_n},\rho-1\}$. Hence,
\begin{align*}
\int\frac{d\theta}{\abs{\rho\theta-e_n}^\alpha} &\lesssim \rho^{-\alpha}\int_{\rho\abs{\theta-e_n}>\rho-1}\frac{d\theta}{\abs{\theta-e_n}^\alpha} + (\rho-1)^{-\alpha}\int_{\rho\abs{\theta-e_n}<\rho-1}d\theta \\
&\simeq (\rho-1)^{n-1-\alpha}/\rho^{n-1},
\end{align*}
which implies $C_{r_1,r_2}\lesssim \abs{r_1-r_2}^{n-1-\alpha}(r_1r_2)^{-\frac{n-1}{2}}$, for $r_2<r_1<2r_2$.
\end{proof}

\begin{theorem} \label{thm:h_decay}
If $f\in\Sigma_\delta(\R^n)$, for $0<\delta<1$, then for $\abs{\tau}\ge 1$ it holds
\begin{equation}\label{eq:thm:decay}
\abs{\hat h_\delta[f](\tau)}\le C\norm{f}^2_{\Sigma_\delta}
\begin{cases}
\tau^{-1-2\delta} & \textrm{for } n\ge 3 \textrm{ and } \delta\le \frac{n}{2}-1, \\
\tau^{-\frac{n+2}{4}-\frac{3}{2}\delta}  & \textrm{for } n=2,3 \textrm{ and } \delta> \frac{n}{2}-1,\\
\tau^{-\frac{3}{4}-\frac{3}{2}\delta+} & \textrm{for } n=1.
\end{cases}
\quad \mbox{a.e.}
\end{equation}
The rate of decay is the best possible --- up to the end point when $n = 1$.
\end{theorem}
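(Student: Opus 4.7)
The plan is to start from the integral representation of $\hat{h}_\delta(\tau)$ in Theorem~\ref{thm:h_away_zero}, pass to spherical coordinates $\xi = r_1\theta_1$, $\eta = r_2\theta_2$, use the delta function to enforce $r_2 = \sqrt{r_1^2+2\tau}$, and dispose of the angular integrals via Lemma~\ref{thm:aux_decay_h}. Writing $g(r) := \|\hat{f}(r\,\cdot)\|_{L^2(S^{n-1})}$, these reductions yield
\begin{equation*}
|\hat{h}_\delta(\tau)| \lesssim \int_0^\infty C_{r_1,r_2}\, g(r_1)\, g(r_2)\, r_1^{n-1}\, r_2^{n-2}\, dr_1
\end{equation*}
with $C_{r_1,r_2}$ given by Lemma~\ref{thm:aux_decay_h}. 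A quick inspection shows that $C_{r_1,r_2}\simeq \tau^{-(n+2\delta)/2}$ throughout the relevant range.

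I would then split the integral at $r_1 = \sqrt{2\tau/3}$. In the balanced region $r_1 > \sqrt{2\tau/3}$ one has $r_1 \simeq r_2 \simeq \sqrt{\tau}$ and $|r_1-r_2|\simeq \tau/r_1$; a direct Cauchy--Schwarz weighted by $r^{n-1+2\delta}$ absorbs both factors into $\|D^\delta f\|_2 \lesssim \|f\|_{\Sigma_\delta}$ and yields the universal bound $\tau^{-1-2\delta}\|f\|_{\Sigma_\delta}^2$. In the unbalanced region $r_1 < \sqrt{2\tau/3}$ (so $r_2 \simeq \sqrt{\tau} \gg r_1$) one is reduced to estimating $\tau^{-1-\delta}\int_0^{\sqrt{2\tau/3}} g(r_1) g(r_2) r_1^{n-1}\,dr_1$. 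A Cauchy--Schwarz with weights $r_1^{n-1\pm 2\delta}$ followed by the change of variables $r_1 \mapsto r_2$ rewrites the second factor as $\int g(r_2)^2 (r_2^2-2\tau)^{(n-2-2\delta)/2} r_2\,dr_2$. When $\delta \le n/2 - 1$ (which forces $n\ge 3$), the Jacobian weight is dominated by $r_2^{n-2-2\delta}$, and combining $r_2 \simeq \sqrt{\tau}$ with $\int g(r_2)^2 r_2^{n-1+2\delta}\,dr_2 \le \|f\|_{\Sigma_\delta}^2$ again produces $\tau^{-1-2\delta}\|f\|_{\Sigma_\delta}^2$, settling the first case of \eqref{eq:thm:decay}.

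When $\delta > n/2 - 1$, the Jacobian $(r_2^2-2\tau)^{(n-2-2\delta)/2}$ develops a singularity at $r_2 = \sqrt{2\tau}$ that is integrable for $n\ge 2$ and non-integrable for $n=1$, and the simple Cauchy--Schwarz no longer closes. Here I would trade $L^2$-integrability in one factor for a Sobolev-type control on the other, using Lemma~\ref{thm:embedding_Sigma} to bound $\|\hat f\|_\infty$ (in $n=1$, $\delta>\frac{1}{2}$) or an appropriate $L^p$-norm in dimension $n=2,3$, and to control the modulus of continuity of $g$ near $r_2 = \sqrt{2\tau}$. Optimising the exponents in the resulting H\"older/interpolation inequality should produce the bounds $\tau^{-(n+2)/4-3\delta/2}$ for $n=2,3$ and $\tau^{-3/4-3\delta/2+}$ for $n=1$; the $\varepsilon$-loss in $n=1$ tracks the borderline case $\delta=\frac{1}{2}$ in which $\hat f \in L^\infty$ just fails. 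This interpolation step---balancing the (near-)singular Jacobian against the Sobolev gain---is the main technical obstacle, and it is there that the factor $(n+2)/4$ is born. Finally, the sharpness of \eqref{eq:thm:decay} is established by the same family of test profiles $\hat f_\alpha(\xi) := \japan{\xi}^{-\alpha}$, $\alpha = n/2 + \delta +$, used in the sharpness argument of Theorem~\ref{thm:Lipschitz}: inserting $\hat f_\alpha$ into \eqref{eq:thm:Fourier_Transform_h} and evaluating the resulting integral in polar coordinates produces a matching lower bound for $|\hat h_\delta(\tau)|$ at large $\tau$, with only the endpoint $n=1$, $\delta=\frac{1}{2}$ lost, as stated.
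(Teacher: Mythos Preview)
Your upper-bound argument is essentially the paper's: spherical coordinates, Lemma~\ref{thm:aux_decay_h} for the angular integral, the split at $r_1\simeq\sqrt{\tau}$, Cauchy--Schwarz in the balanced region, and H\"older/Sobolev via Lemma~\ref{thm:embedding_Sigma} in the unbalanced one. The sketch for cases two and three is vague (``optimising the exponents \ldots\ should produce''), but the strategy is right and matches the paper's execution. One small inaccuracy: $C_{r_1,r_2}\simeq\tau^{-(n+2\delta)/2}$ is only true in the unbalanced region; in the balanced region $C_{r_1,r_2}$ depends on $r_1$ through $|r_1-r_2|^{-1-2\delta}\simeq (r_1/\tau)^{1+2\delta}$, though this is exactly what makes the Cauchy--Schwarz close.

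The genuine gap is in the sharpness argument. The radial profiles $\hat f_\alpha(\xi)=\japan{\xi}^{-\alpha}$ with $\alpha=n/2+\delta+$ only saturate the first bound $\tau^{-1-2\delta}$. For $n\le 3$ with $\delta>n/2-1$ (and for $n=1$) they give $|\hat h_\delta(\tau)|\gtrsim \tau^{-1-2\delta-}$ (resp.\ $\tau^{-5/4-3\delta/2-}$), which is \emph{strictly weaker} than the claimed rates $\tau^{-(n+2)/4-3\delta/2}$ and $\tau^{-3/4-3\delta/2}$. The obstruction is that a smooth radial profile cannot concentrate $\hat f$ simultaneously near the origin and near a sphere of radius $\sim\sqrt{\tau}$, which is what the integral \eqref{eq:thm:Fourier_Transform_h} needs in order to pick up the worst behaviour of the kernel. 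The paper builds instead
\[
\hat f(\xi)=\zeta(\xi)+\sum_{k\ge 1}2^{-k(\frac{n-2}{2}+\delta)}k^{-2}\,(\zeta_k*dS_k)(\xi),
\]
a bump at the origin plus thin shells at radii $2^k$; evaluating $\hat h_\delta$ at $\tau=2^{2k-1}$ isolates the interaction between the central bump and the $k$-th shell and produces the matching lower bound. Verifying that this $f$ lies in $\Sigma_\delta$ (via the stationary-phase decay of $(dS)^\vee$) is part of the work. Without this construction your sharpness claim for the second and third cases is unsupported.
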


Theorem~\ref{thm:h_decay} provides an alternative proof of Theorem~\ref{thm:Lipschitz} when $n\ge 3$ and $\delta\le \frac{n}{2}-1$.

\begin{proof}
We can assume that $f\in C_0^\infty(\R^n)$. In fact, for general $f\in\Sigma_\delta(\R^n)$ we can take a sequence of functions $\{f_n\}_n$ in $C_0^\infty(\R^n)$ (Lemma~\ref{thm:smooth_dense}) converging to $f$ in $\Sigma_\delta(\R^n)$. If we reprise the arguments in the proof of \eqref{eq:thm:Fourier_h_L1} we can see that
\begin{equation*}
\norm{\hat h[f]-\hat h[g]}_{L^1(\R\setminus[-1,1])} \le C\norm{f-g}_{\Sigma_\delta}(\norm{f}_{\Sigma_\delta}+\norm{g}_{\Sigma_\delta}).
\end{equation*}
Thus, passing to a sub-sequence $h_\delta[f_n]\to h_\delta[f]$ a.e. and we are done.

Since $\hat h_\delta(-\tau) = \conj{\hat h}_\delta(\tau)$, we assume $\tau>0$. We re-scale \eqref{eq:thm:Fourier_Transform_h} to write
\begin{align*}
\hat{h}_\delta(\tau) &= -2b_{n,\delta}\tau^{\frac{n}{2}-\delta-1}\int \hat{f}(\sqrt{\tau}\xi)\conj{\hat{f}}(\sqrt{\tau}\eta)\delta\Big(1-\frac{\abs{\eta}^2-\abs{\xi}^2}{2}\Big)\frac{d\xi d\eta}{\abs{\xi-\eta}^{n+2\delta}};
\end{align*}
passing to spherical coordinates we have
\begin{align*}
\abs{\hat h_\delta(\tau)} &\le C\tau^{\frac{n}{2}-\delta-1}\int \delta\Big(1-\frac{r_2^2-r_1^2}{2}\Big)r_1^{n-1}r_2^{n-1} \\
&\hspace*{3cm} \Big[\int_{S^{n-1}\times S^{n-1}}\frac{\abs{\hat{f}(\sqrt{\tau}r_1\theta_1)\hat{f}(\sqrt{\tau}r_2\theta_2)}\,d\theta_1d\theta_2}{\abs{r_1\theta_1-r_2\theta_2}^{n+2\delta}}\Big]\,dr_1dr_2.
\end{align*}
The term $\delta(1-(r_2^2-r_1^2)/2)$ forces $\abs{r_2/r_1} = \sqrt{1+2/r_1^2}$.

We apply Lemma~\ref{thm:aux_decay_h} to the term in parentheses to deduce
\begin{align}
\abs{\hat h_\delta(\tau)} &\le C\tau^{\frac{n}{2}-\delta-1}\int\delta\Big(1-\frac{r_2^2-r_1^2}{2}\Big)r_1^{n-1}r_2^{n-1}\norm{\hat f(\sqrt{\tau}r_1\cdot)}_2\norm{\hat f(\sqrt{\tau}r_2\cdot)}_2 \notag \\
&\hspace*{2.8cm}\Big[\ind_{\{r_1>\sqrt{2/3}\}}\frac{1}{(r_1r_2)^{\frac{n}{2}-1-\delta}}+\ind_{\{r_1<\sqrt{2/3}\}}\frac{1}{r_2^{n+2\delta}}\Big]\,dr_1 dr_2 \notag \\
&:= I_{\{r_1>\sqrt{2/3}\}} + I_{\{r_1<\sqrt{2/3}\}}. \label{eq:decay_spherical}
\end{align}

We bound the contribution over the region $\{r_1>\sqrt{2/3}\}$ as
\begin{align}
I_{\{r_1>\sqrt{2/3}\}} &\le C\tau^{\frac{n}{2}-\delta-1}\int_{r_1>\sqrt{2/3}}\delta\Big(1-\frac{r_2^2-r_1^2}{2}\Big)  \notag \\
&\hspace*{2cm} \Big[r_1^{n+2\delta}\norm{\hat f(\sqrt{\tau}r_1\cdot)}_2^2 + r_2^{n+2\delta}\norm{\hat f(\sqrt{\tau}r_2\cdot)}_2^2\Big]\,dr_1dr_2 \notag \\
&\le C\tau^{\frac{n}{2}-\delta-1}\int_{r>c}r^{n-1+2\delta}\norm{\hat f(\sqrt{\tau}r\cdot)}_2^2\,dr \notag \\
&\le C\tau^{-1-2\delta}\norm{\abs{\xi}^\delta\hat f}_2^2.\label{eq:decay_r_big}
\end{align} 
It remains to control the integral over $\{r_1<\sqrt{2/3}\}$.

When $r_1<c$, the term $\delta(1-(r_2^2-r_1^2)/2)$ forces $r_2\simeq 1$, so 
\begin{equation*}
I_{\{r_1<\sqrt{2/3}\}} \le C\tau^{\frac{n}{2}-\delta-1}\int_{r_1<\sqrt{2/3}} r_1^{n-1}\norm{\hat f(\sqrt{\tau}r_1\cdot)}_2\norm{\hat f(\sqrt{\tau}(2+r_1^2)^\frac{1}{2}\cdot)}_2\,dr_1
\end{equation*}
We leave aside momentarily the case $n = 1$. We use H\"older to get (we write $r = r_1$)
\begin{multline*}
I_{\{r_1<\sqrt{2/3}\}} \le C\tau^{\frac{n}{2}-\delta-1}\Big(\int_{r<c}r^{2(n-1)-1}\norm{\hat f(\sqrt{\tau}r\cdot)}_2^2\,dr\Big)^\frac{1}{2} \\
\Big(\int_{r<c}r\norm{\hat f(\sqrt{\tau}(2+r^2)^\frac{1}{2}\cdot)}_2^2\,dr\Big)^\frac{1}{2},
\end{multline*}
after the change of variable $t = \sqrt{2+r^2}$ we get
\begin{align*}
I_{\{r_1<\sqrt{2/3}\}} &\le C\tau^{\frac{n}{2}-\delta-1}\Big(\int_{\abs{\xi}<c}\abs{\xi}^{n-2}\abs{\hat f(\sqrt{\tau}\xi)}^2\,d\xi\Big)^\frac{1}{2}\Big(\int_{\abs{\xi}\simeq 1}\abs{\hat f(\sqrt{\tau}\xi)}^2\,d\xi\Big)^\frac{1}{2} \\
&\le C\tau^{\frac{n}{4}-\frac{3}{2}\delta-1}\Big(\int_{\abs{\xi}<c}\abs{\xi}^{n-2}\abs{\hat f(\sqrt{\tau}\xi)}^2\,d\xi\Big)^\frac{1}{2}\norm{\abs{\xi}^\delta \hat f}_2.
\end{align*}

If $n-2\ge 2\delta$, then we bound the last integral in parentheses as
\begin{equation*}
\int_{\abs{\xi}<c}\abs{\xi}^{n-2}\abs{\hat f(\sqrt{\tau}\xi)}^2\,d\xi \le C\int_{\abs{\xi}<c}\abs{\xi}^{2\delta}\abs{\hat f(\sqrt{\tau}\xi)}^2\,d\xi\le C\tau^{-\frac{n}{2}-\delta}\norm{\abs{\xi}^\delta\hat f}^2_2,
\end{equation*}
so $I_{\{r_1<\sqrt{2/3}\}}\le C\tau^{-2\delta-1}$, which together with \eqref{eq:decay_spherical} and \eqref{eq:decay_r_big} implies the first case in \eqref{eq:thm:decay}. 

If $0\le n-2<2\delta$, then 
\begin{align*}
\int_{\abs{\xi}<c}\abs{\xi}^{n-2}\abs{\hat f(\sqrt{\tau}\xi)}^2\,d\xi &\le \tau^{-n+1}\int_{\abs{\xi}<\sqrt{\tau}c}\abs{\xi}^{n-2}\abs{\hat f(\xi)}^2\,d\xi \\
&\le C\tau^{-n+1}\norm{f}^2_{\Sigma_\delta}.
\end{align*}
Hence $I_{\{r_1<\sqrt{2/3}\}}\le C\tau^{-\frac{n+2}{4}-\frac{3}{2}\delta}$, which together with \eqref{eq:decay_spherical} and \eqref{eq:decay_r_big} implies the second case in \eqref{eq:thm:decay}. 

Now we consider the case $n = 1$, so we have to bound the integral
\begin{equation*}
I'_{\{r<\sqrt{2/3}\}} := C\tau^{-\frac{1}{2}-\delta}\int_{r<c}\abs{\hat f(\sqrt{\tau}r)}\abs{\hat f(\sqrt{\tau}(2+r^2)^\frac{1}{2})}\,dr.
\end{equation*}
We intend to use the embedding $\Sigma_\delta \xhookrightarrow{} L^p$ in Lemma~\ref{thm:embedding_Sigma}. We apply  H\"older inequality (twice) and the change of variables $t = \sqrt{2+r^2}$ to get
\begin{align*}
I'_{\{r<\sqrt{2/3}\}} &\le C\tau^{-\frac{1}{2}-\delta}\Big(\int_{r<c}\abs{\hat f(\sqrt{\tau}r)}^p\,dr\Big)^\frac{1}{p}\Big(\int_{\sqrt{2}}^{\sqrt{2+c^2}}\abs{\hat f(\sqrt{\tau}t)}^{p'}\frac{dt}{\sqrt{t^2-2}}\Big)^\frac{1}{p'} \\
&\le C_\varepsilon \tau^{-\frac{1}{2}-\delta-\frac{1}{2p}}\norm{\hat f}_p\Big(\int_{t\simeq 1}\abs{\hat f(\sqrt{\tau}t)}^{(2+\varepsilon)p'}\,dt\Big)^\frac{1}{(2+\varepsilon)p'}.
\end{align*}

If $\delta< \frac{1}{2}$, then we take $\frac{1}{(2+\varepsilon)p'} = \frac{1}{2}-\delta$, in which case $\frac{1}{p} = (2+\varepsilon)\delta-\varepsilon/2<\frac{1}{2}+\delta$, so that
\begin{equation*}
I'_{\{r<\sqrt{2/3}\}} \le C_\varepsilon \tau^{-\frac{1}{2}-\delta-\frac{1}{2p}-\frac{1}{2(2+)p'}}\norm{\hat f}_p\norm{\hat f}_{(2+)p'}\le C \tau^{-\frac{3}{4}-\frac{3}{2}\delta+}\norm{f}_{\Sigma_\delta}^2,
\end{equation*}
which together with \eqref{eq:decay_spherical} and \eqref{eq:decay_r_big} implies the third case in \eqref{eq:thm:decay}, for $\delta< \frac{1}{2}$.

If $\delta\ge \frac{1}{2}$, then we take $p$ very large and notice that
\begin{equation*}
\Big(\int_{t\simeq 1}\abs{\hat f(\sqrt{\tau}t)}^{(2+\varepsilon)p'}\,dt\Big)^\frac{1}{(2+\varepsilon)p'}\le \Big(\int_{t\simeq 1}\abs{\hat f(\sqrt{\tau}t)}^2\,dt\Big)^\frac{\theta}{2}\Big(\int\abs{\hat f(\sqrt{\tau}t)}^p\,dt\Big)^\frac{1-\theta}{p},
\end{equation*}
where $\frac{1}{(2+)p'} = \frac{\theta}{2}+\frac{1-\theta}{p}$, so $0<\theta <1$ can be made arbitrarily close to 1 if $p\gg 1$. Hence, 
\begin{align*}
I'_{\{r<\sqrt{2/3}\}} &\le C_\varepsilon \tau^{-\frac{1}{2}-\delta-\frac{2-\theta}{2p}}\norm{\hat f}_p^{2-\theta}\Big(\int_{t\simeq 1}\abs{\hat f(\sqrt{\tau}t)}^2\,dt\Big)^\frac{\theta}{2} \\
&\le C_\varepsilon \tau^{-\frac{3}{4}-\frac{3}{2}\delta+}\norm{\hat f}_p^{2-\theta}\norm{\abs{\xi}^\delta \hat f}_2^\theta \\
&\le C \tau^{-\frac{3}{4}-\frac{3}{2}\delta+}\norm{f}_{\Sigma_\delta}^2,
\end{align*}
which together with \eqref{eq:decay_spherical} and \eqref{eq:decay_r_big} concludes the proof of the last case in \eqref{eq:thm:decay}.  
\\[0.2cm]
\indent\textit{Sharpness of the rate of decay}
\\[0.2cm]
The example used in Theorem~\ref{thm:Lipschitz} shows that the decay $\abs{\tau}^{-1-2\delta}$, for $f\in\Sigma_\delta$, cannot be improved, so we turn to the case $n \le 3$.

Let $\zeta\in C^\infty_0(\R)$ be a symmetric cut-off of $B_1$, and let $dS_k$ denote the standard measure on the sphere with radius $2^k$ and center at the origin. To construct the example, we define $\zeta_k(\xi) := 2^{k(n-1)}\zeta(2^k\xi)$ and set 
\begin{equation*}
\hat f(\xi) := \zeta(\xi) + \sum_{k\ge 1} 2^{-k(\frac{n-2}{2}+\delta)}\frac{1}{k^2}(\zeta_k*dS_{k})(\xi).
\end{equation*} 

\begin{center}
\includegraphics[scale=0.8]{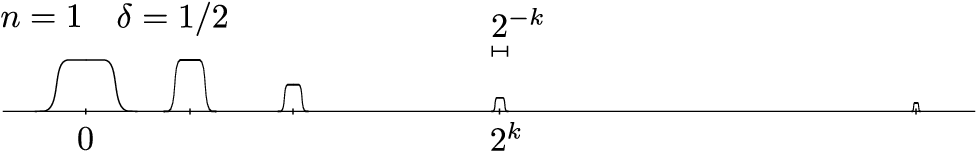}
\end{center}

\noindent Direct computation shows that$\norm{\abs{\xi}^\delta \hat f}_2<\infty$, so we must show that $\norm{\abs{x}^\delta f}_2<\infty$; we only consider the harder case $n\ge 2$.

By the triangle inequality
\begin{equation*}
\norm{\abs{x}^\delta f}_2\le \norm{\abs{x}^\delta \check\zeta}_2 + \sum_{k\ge 1}2^{-k(\frac{n-2}{2}+\delta)}\frac{1}{k^2}\norm{\abs{x}^\delta \check{\zeta}_k(dS_{k})^\vee}_2,
\end{equation*}
After the dilation $x\mapsto 2^{-k} x$, each term in the sum gets into
\begin{equation*}
\norm{\abs{x}^\delta\check{\zeta}_k(dS_{k})^\vee}_2 = 2^{k(\frac{n}{2}-2-\delta)}\norm{\abs{x}^\delta\check{\zeta}(2^{-2k}x) (dS)^\vee}_2,
\end{equation*}
From the inequality $\abs{(dS)^\vee(\xi)}\lesssim \japan{\xi}^{-\frac{n-1}{2}}$ \cite[Ch. VIII-3]{MR1232192} we deduce that $\norm{\abs{x}^\delta\check{\zeta}_k(dS_{k})^\vee}_2 \lesssim 2^{k(\frac{n-2}{2}+\delta)}$, which leads to $\norm{\abs{x}^\delta f}_2<\infty$.

We estimate now $\abs{\hat h_\delta(\tau)}$ for $\tau = 2^{2k-1}$ and $k\gg 1$:
\begin{align*}
\abs{\hat h_\delta(\tau)}&\ge c\frac{1}{k^2}2^{-k(\frac{n-2}{2}+\delta)}\int_{\R^{2n}}\zeta(\xi)(\zeta_k*dS_{k})(\eta)\delta\Big(\tau-\frac{\eta^2-\xi^2}{2}\Big)\frac{d\xi d\eta}{\abs{\xi-\eta}^{n+2\delta}} \\
&\ge c\frac{1}{k^2}2^{-k(\frac{n-2}{2}+\delta)}\int_{\R^2}\zeta(r_1)\zeta(2^k(r_2-\sqrt{2s}))\delta\Big(\tau-\frac{r_2^2-r_1^2}{2}\Big)\frac{r_1^{n-1}dr_1dr_2}{r_2^{1+2\delta}} \\
&\ge c\frac{1}{k^2}2^{-k(\frac{n-2}{2}+\delta)}\tau^{-1-\delta}\int_{\R^2}\zeta(r_1)\zeta(2^k(\sqrt{2s+r_1^2}-\sqrt{2s}))r_1^{n-1}\,dr_1.
\end{align*} 

\begin{center}
\includegraphics[scale=0.9]{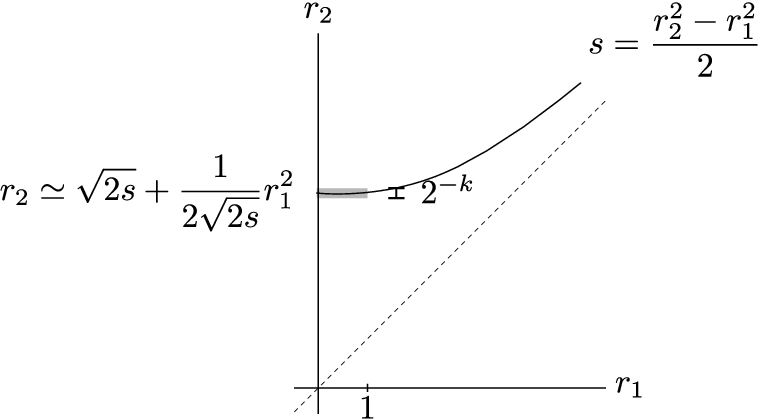}
\end{center}

\noindent Since $\zeta(2^k(\sqrt{2s+r_1^2}-\sqrt{2s}))\gtrsim 1$ for $\abs{\xi}<c$, then

\begin{equation*}
\abs{\hat h_\delta(\tau)} \ge c\frac{1}{k^2}\tau^{-\frac{n+2}{4}-\frac{3}{2}\delta}.
\end{equation*}
Hence, if $\abs{\hat h_\delta(\tau)}\le C\tau^{-\alpha}$, then $\alpha\le \frac{n+2}{4}+\frac{3}{2}\delta$ and the rate of decay in \eqref{eq:thm:decay} cannot be improved.
\end{proof}


\section{Periodic Data}\label{sec:periodic_data}

In the section we extend the definition of $h_\delta$ to solutions of the Schr\"odinger equation with periodic initial data, with the aim to define $h_\delta[f]$ when $f$ is the Dirac comb. 

We choose a real, symmetric function $\psi\in\mathcal{S}(\R^n)$ with $\supp\hat{\psi}\subset B_1$ and $\psi(0) = 1$. Now we approach a periodic function $F$ in $\R^n/\Z^n$ as 
\begin{equation}\label{eq:approach_periodic}
f_\varepsilon(x) := N_\varepsilon^{-1} \psi(\varepsilon x)F(x) = N_\varepsilon^{-1} \psi(\varepsilon x)\sum_{\nu\in\Z^n} \hat{F}(\nu)e(2\pi i x\cdot \nu),
\end{equation}
where $N_\varepsilon^2 = \varepsilon^{-n}\norm{\psi}_2^2\norm{F}_{L^2(\mathbb{T})}^2$ is the normalization constant; henceforth, we will assume that $\norm{F}_{L^2(\mathbb{T})} = 1$. The Fourier transform is
\begin{equation*}
\hat{f}_\varepsilon(\xi) = N_\varepsilon^{-1}\frac{1}{\varepsilon^n}\sum_{\nu\in\Z^n}\hat{F}(\nu)\hat{\psi}((\xi-\nu)/\varepsilon).
\end{equation*}
We want to study how $h_\delta[f_\varepsilon]$ evolves as $\varepsilon\to 0$.

The Fourier transform of $h_\delta[f_\varepsilon]$ away from the origin is
\begin{align*}
\inner{\hat{h}_\delta}{\varphi} &= -2b_{n,\delta}\int_{\R^{2n}}\hat{f}_\varepsilon(\xi)\conj{\hat{f}}_\varepsilon(\eta)\varphi\Big(\frac{\abs{\eta}^2-\abs{\xi}^2}{2}\Big)\frac{d\xi d\eta}{\abs{\xi-\eta}^{n+2\delta}} \\
&= -2b_{n,\delta}\frac{N_\varepsilon^{-2}}{\varepsilon^{2n}}\sum_{\nu_1,\nu_2}\hat{F}(\nu_1)\conj{\hat{F}}(\nu_2) \\
&\hspace*{1.5cm} \int \hat{\psi}((\xi-\nu_1)/\varepsilon)\hat{\psi}((\eta-\nu_2)/\varepsilon)\varphi\Big(\frac{\abs{\xi}^2-\abs{\eta}^2}{2}\Big) \frac{d\xi d\eta}{\abs{\xi-\eta}^{n+2\delta}},
\end{align*}
where $\varphi\in\mathcal{S}(\R)$ is supported away from the origin. In this expression we can distinguish two types of terms: diagonal ($\nu_1= \nu_2$) and off-diagonal ($\nu_1\neq \nu_2$). Diagonal terms are more related to the behavior of $h_\delta$ in the large, and off-diagonal terms are more related to the local phenomena we are interested in.  

\begin{definition}[Decomposition of $h_\delta$]
Let $F$ be a normalized periodic function in $\R^n/\Z^n$. The $\varepsilon$-periodic part $h_{\textrm{p},\varepsilon,\delta}[F]$ (off-diagonal part) is given by
\begin{equation}\label{eq:def:eps_periodic}
\begin{split}
\inner{\hat{h}_{\textrm{p},\varepsilon,\delta}}{\varphi} &:= -\frac{2b_{n,\delta}}{\varepsilon^{2n}\norm{\psi}_2^2}\sum_{\nu_1 \neq \nu_2}\hat{F}(\nu_1)\conj{\hat{F}}(\nu_2) \\
&\hspace*{1cm} \int \hat{\psi}((\xi-\nu_1)/\varepsilon)\hat{\psi}((\eta-\nu_2)/\varepsilon)\varphi\Big(\frac{\abs{\xi}^2-\abs{\eta}^2}{2}\Big) \frac{d\xi d\eta}{\abs{\xi-\eta}^{n+2\delta}},
\end{split}
\end{equation}
where $\varphi\in\mathcal{S}(\R)$ is a test function. The $\varepsilon$-background part $h_{\textrm{b},\varepsilon,\delta}$ (diagonal part) is given by
\begin{equation}\label{eq:def:eps_background}
h_{\textrm{b},\varepsilon,\delta}[F] := h_\delta[f_\varepsilon] - \varepsilon^n h_{\textrm{p},\varepsilon,\delta}[F].
\end{equation}
\end{definition}

Once we have defined the decomposition of $h_\delta$, we concentrate for the moment on the behavior of the $\varepsilon$-periodic part $h_{\textrm{p},\varepsilon,\delta}$ as $\varepsilon$ tends to zero, but first we need a definition.

\begin{definition}
Let $F$ be a normalized periodic function in $\R^n/\Z^n$. The periodic limit $h_{\textrm{p},\delta}[F]$ is given by
\begin{equation}\label{eq:def:periodic}
\begin{split}
\inner{\hat{h}_{\textrm{p},\delta}}{\varphi} &:= -\frac{2b_{n,\delta}}{\norm{\psi}_2^2}\sum_{\nu_1 \neq \nu_2}\hat{F}(\nu_1)\conj{\hat{F}}(\nu_2)\varphi\Big(\frac{\abs{\nu_1}^2-\abs{\nu_2}^2}{2}\Big)\frac{1}{\abs{\nu_1-\nu_2}^{n+2\delta}},
\end{split}
\end{equation}
where $\varphi\in\mathcal{S}(\R)$ is a test function.
\end{definition}

\begin{lemma}\label{thm:periodic_h}
Let $F$ be a normalized periodic function such that $\hat{F}\in \ell^2(\abs{\nu}^{2\delta})$. If $h_{\textrm{p},\varepsilon,\delta}[F]$ and $h_{\textrm{p},\delta}[F]$ are the distributions in \eqref{eq:def:eps_periodic} and \eqref{eq:def:periodic}, respectively, then $h_{\textrm{p},\varepsilon,\delta}[F]$ converges uniformly in compact sets to $h_{\textrm{p},\delta}[F]$, and $\norm{\hat h_{\textrm{p},\delta}[F]}_{L^1}\lesssim 1$.
\end{lemma}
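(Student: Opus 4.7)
The plan is to recognize both $\hat h_{\textrm{p},\delta}$ and $\hat h_{\textrm{p},\varepsilon,\delta}$ (for small $\varepsilon$) as finite measures with uniformly bounded total variation, Fourier--invert them to obtain absolutely convergent series for the functions $h_{\textrm{p},\delta}(t)$ and $h_{\textrm{p},\varepsilon,\delta}(t)$, and then pass to the limit term by term.

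For the $L^1$ bound, the right--hand side of \eqref{eq:def:periodic} is the pairing of $\varphi$ with an atomic measure whose total variation is at most $\frac{2b_{n,\delta}}{\norm{\psi}_2^2}\sum_{\nu_1\neq\nu_2}\abs{\hat F(\nu_1)}\,\abs{\hat F(\nu_2)}\abs{\nu_1-\nu_2}^{-n-2\delta}$. This is $\inner{K*\abs{\hat F}}{\abs{\hat F}}$ with $K(\nu):=\ind_{\{\nu\neq 0\}}\abs{\nu}^{-n-2\delta}$; by Young and Cauchy--Schwarz it is at most $\norm{K}_{\ell^1(\Z^n)}\norm{\hat F}_{\ell^2}^2\lesssim 1$, and $\norm{K}_{\ell^1}$ is finite because $\delta>0$ (note that $\hat F\in\ell^2(\abs{\nu}^{2\delta})$ together with $\norm{F}_{L^2(\mathbb{T})}=1$ yields $\norm{\hat F}_{\ell^2}\le 1$). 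For the $\varepsilon$--analogue, the change of variables $u=(\xi-\nu_1)/\varepsilon$, $v=(\eta-\nu_2)/\varepsilon$ combined with $\supp\hat\psi\subset B_1$ and $\abs{\nu_1-\nu_2}\ge 1>2\varepsilon$ for $\varepsilon<1/4$ gives $\abs{\nu_1-\nu_2+\varepsilon(u-v)}\ge\abs{\nu_1-\nu_2}/2$, and so the same bound controls the total variation of $\hat h_{\textrm{p},\varepsilon,\delta}$ uniformly in $\varepsilon<1/4$.

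Fourier--inverting yields continuous bounded functions represented by the absolutely convergent series
\begin{equation*}
h_{\textrm{p},\delta}(t)=-\frac{2b_{n,\delta}}{\norm{\psi}_2^2}\sum_{\nu_1\neq\nu_2}\frac{\hat F(\nu_1)\conj{\hat F(\nu_2)}}{\abs{\nu_1-\nu_2}^{n+2\delta}}e^{\pi i t(\abs{\nu_1}^2-\abs{\nu_2}^2)},
\end{equation*}
and $h_{\textrm{p},\varepsilon,\delta}(t)$ is given by the same sum with each $\abs{\nu_1-\nu_2}^{-n-2\delta}$ replaced by
\begin{equation*}
J_\varepsilon(t,\nu_1,\nu_2):=\int\hat\psi(u)\hat\psi(v)\frac{e^{2\pi i t\varepsilon(\nu_1\cdot u-\nu_2\cdot v)+\pi i t\varepsilon^2(\abs{u}^2-\abs{v}^2)}}{\abs{\nu_1-\nu_2+\varepsilon(u-v)}^{n+2\delta}}\,du\,dv,
\end{equation*}
which satisfies the uniform bound $\abs{J_\varepsilon(t,\nu_1,\nu_2)}\le C\abs{\nu_1-\nu_2}^{-n-2\delta}$ for every $t\in\R$ and every $\varepsilon<1/4$.

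Uniform convergence on any compact $K\subset\R$ then follows by truncation. Given $\epsilon_0>0$, the absolute summability above lets us pick $N$ such that the tail over $\{\max(\abs{\nu_1},\abs{\nu_2})>N\}$ contributes less than $\epsilon_0/3$ to either series, uniformly in $\varepsilon<1/4$ and in $t\in K$. For each of the finitely many pairs with $\max(\abs{\nu_1},\abs{\nu_2})\le N$, dominated convergence in $J_\varepsilon$---using $\int\hat\psi=\psi(0)=1$, the compact support of $\hat\psi$, and the fact that $\abs{t\varepsilon\nu_j}\to 0$ uniformly on $K$ for fixed $\nu_j$, which forces both oscillatory exponentials to $1$ uniformly in the integration region---shows that $J_\varepsilon(t,\nu_1,\nu_2)\to\abs{\nu_1-\nu_2}^{-n-2\delta}$ uniformly for $t\in K$, so the partial sum matches the corresponding partial sum of $h_{\textrm{p},\delta}$ up to $\epsilon_0/3$. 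The main delicate point is the uniform--in--$\varepsilon$ tail estimate, which ultimately rests on $\delta>0$ making $K$ summable on $\Z^n\setminus\{0\}$; everything else reduces to dominated convergence.
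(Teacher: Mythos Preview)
Your proof is correct and takes a genuinely different, more streamlined route than the paper's. The paper first invokes the machinery of Theorem~\ref{thm:h_away_zero} to get the uniform bound $\norm{\hat h_{\textrm{p},\varepsilon,\delta}}_{L^1}\lesssim 1$, extracts a weak$^*$ limit, and then identifies it via a scale--splitting: for $\max\abs{\nu_i}>R$ it bounds the error crudely, using the hypothesis $\hat F\in\ell^2(\abs{\nu}^{2\delta})$ to make the tail $\lesssim R^{-2\delta}$, while for $\max\abs{\nu_i}<R$ it Taylor--expands the kernel and $\varphi$ to first order, picking up a $\norm{\varphi'}_\infty R\varepsilon$ loss; the choice $R=\varepsilon^{-1/2}$ balances the two. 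You instead observe directly that the kernel $K(\nu)=\abs{\nu}^{-n-2\delta}$ lies in $\ell^1(\Z^n\setminus\{0\})$ for every $\delta>0$, so Young's inequality gives absolute convergence of both double sums with a summable majorant independent of $\varepsilon$ and $t$; uniform convergence on compacts then reduces to termwise convergence on a finite truncation, which is immediate. Your argument is not only shorter but actually establishes the lemma under the weaker hypothesis $\hat F\in\ell^2$ (i.e.\ merely $F\in L^2(\mathbb{T}^n)$), whereas the paper's $E_1$--estimate as written uses the weighted norm to obtain a quantitative rate. The paper's approach, on the other hand, yields an explicit rate of convergence in $\varepsilon$ and fits naturally with the earlier analysis of $\hat h_\delta$.
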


\begin{proof}
The distribution $\hat h_{\textrm{p},\varepsilon,\delta}[F]$ is an integrable function. In fact, we can bound $\abs{\inner{\hat{h}_{\textrm{p},\varepsilon,\delta}}{\varphi}}$ as
\begin{equation*}
\abs{\inner{\hat{h}_{\textrm{p},\varepsilon,\delta}}{\varphi}}\le C\norm{\varphi}_\infty,
\end{equation*}
where $C$ is independent of $\varepsilon$. The same arguments used in Theorem~\ref{thm:h_away_zero} to prove \eqref{eq:thm:Fourier_h_L1} show that $\norm{\hat h_{\textrm{p},\varepsilon,\delta}}_{L^1}\lesssim 1$, so there exists a measure $\mu$ and a sequence $\{h_{\textrm{p},\varepsilon_k,\delta}\}_k$, with $\varepsilon_k\to 0$, that converges weakly$^*$ to $\mu$ with $\abs{\mu}(\R)\lesssim 1$.

To evaluate the integral in \eqref{eq:def:eps_periodic} we fix a number $R\ge 1$ and notice that for $(\xi,\eta)$ at distance less than $\varepsilon$ from $(\nu_1,\nu_2)$ we have two bounds: if $\{\max\abs{\nu_i}> R\}$ then
\begin{multline*}
\varphi\Big(\frac{\abs{\xi}^2-\abs{\eta}^2}{2}\Big)\frac{1}{\abs{\xi-\eta}^{n+2\delta}} - \varphi\Big(\frac{\abs{\nu_1}^2-\abs{\nu_2}^2}{2}\Big)\frac{1}{\abs{\nu_1-\nu_2}^{n+2\delta}} = \\
 \BigO\Big(\frac{\norm{\varphi}_\infty}{\abs{\nu_1-\nu_2}^{n+2\delta}}\Big),
\end{multline*}
and if $\{\max\abs{\nu_i}< R\}$ then
\begin{multline*}
\varphi\Big(\frac{\abs{\xi}^2-\abs{\eta}^2}{2}\Big)\frac{1}{\abs{\xi-\eta}^{n+2\delta}} - \varphi\Big(\frac{\abs{\nu_1}^2-\abs{\nu_2}^2}{2}\Big)\frac{1}{\abs{\nu_1-\nu_2}^{n+2\delta}} = \\
 \BigO\Big(\frac{\norm{\varphi}_\infty\varepsilon}{\abs{\nu_1-\nu_2}^{n+2\delta+1}} + \frac{\norm{\varphi'}_\infty R\varepsilon}{\abs{\nu_1-\nu_2}^{n+2\delta}}\Big).
\end{multline*}
With these two bounds we get the following estimate for \eqref{eq:def:eps_periodic}:
\begin{align*}
\inner{\hat{h}_{\textrm{p},\varepsilon,\delta}}{\varphi} &= -\frac{2b_{n,\delta}}{\norm{\psi}_2^2}\sum_{\nu_1\neq \nu_2}\hat{F}(\nu_1)\conj{\hat{F}}(\nu_2)\Big[\varphi\Big(\frac{\abs{\nu_1}^2-\abs{\nu_2}^2}{2}\Big)\frac{1}{\abs{\nu_1-\nu_2}^{n+2\delta}} + \\
&\hspace*{2.5cm} + \ind_{\{\max\abs{\nu_i}>R\}}\BigO\Big(\frac{\norm{\varphi}_\infty}{\abs{\nu_1-\nu_2}^{n+2\delta}}\Big) +\\
&\hspace*{2.5cm} + \ind_{\{\max\abs{\nu_i}<R\}}\BigO\Big(\frac{\norm{\varphi}_\infty\varepsilon}{\abs{\nu_1-\nu_2}^{n+2\delta+1}} + \frac{\norm{\varphi'}_\infty R\varepsilon}{\abs{\nu_1-\nu_2}^{n+2\delta}}\Big)\Big] \\
&:= \inner{\hat h_{\textrm{p},\delta}}{\varphi} + \textrm{E}_1 + \textrm{E}_2. 
\end{align*}
We bound the first error term as
\begin{align}
\textrm{E}_1 &\le C\norm{\varphi}_\infty\sum_{\nu_1}\abs{\hat{F}(\nu_1)}^2\sum_{\max\abs{\nu_i}>R}\frac{1}{\abs{\nu_1-\nu_2}^{n+2\delta}} \notag \\
&\le C\norm{\varphi}_\infty\sum_{\nu_1}\abs{\hat{F}(\nu_1)}^2\Big(\ind_{\abs{\nu_1}<R/2}R^{-2\delta} + \ind_{\abs{\nu_1}>R/2}\Big) \notag \\
&\le C\norm{\varphi}_\infty R^{-2\delta}(1 + \norm{\hat{F}}_{\ell^2(\abs{\nu}^{2\delta})}^2); \label{eq:limit_periodic_er1}
\end{align}
we bound the second error term as
\begin{equation}\label{eq:limit_periodic_er2}
\textrm{E}_2 \le C(\norm{\varphi}_\infty\varepsilon + \norm{\varphi'}_\infty R\varepsilon).
\end{equation}
We have thus
\begin{multline*}
\inner{\hat{h}_{\textrm{p},\varepsilon,\delta}}{\varphi} = -\frac{2b_{n,\delta}}{\norm{\psi}_2^2}\sum_{\nu_1\neq \nu_2}\hat{F}(\nu_1)\conj{\hat{F}}(\nu_2)\varphi\Big(\frac{\abs{\nu_1}^2-\abs{\nu_2}^2}{2}\Big)\frac{1}{\abs{\nu_1-\nu_2}^{n+2\delta}} + \\
+ \BigO(\norm{\varphi}_\infty R^{-2\delta} + \norm{\varphi}_\infty\varepsilon + \norm{\varphi'}_\infty R\varepsilon).
\end{multline*}
Taking $R = \varepsilon^{-\frac{1}{2}}$ we see that $\inner{\hat{h}_{\textrm{p},\varepsilon,\delta}}{\varphi}\xrightarrow{\varepsilon\to 0} \inner{\hat h_{\textrm{p},\delta}}{\varphi}$ when $\varphi\in \mathcal{S}(\R)$, which implies that $\mu = \hat h_{\textrm{p},\delta}$ is unique and that $\norm{\hat h_{\textrm{p},\delta}}_{L^1}\lesssim 1$. 

To compute $h_{\textrm{p},\delta}$ we set $\varphi(\tau) = e^{2\pi i\tau t}$, and since the error terms \eqref{eq:limit_periodic_er1} and \eqref{eq:limit_periodic_er2} are uniform in $t$ when $\abs{t}\le T$, for any $T>0$, then we conclude that $h_{p,\varepsilon,\delta}$ converges uniformly in compact sets to $h_{\textrm{p},\delta}$.
\end{proof}

The function $h_{\textrm{p},\delta}[F]$ is our desired extension of $h_\delta[f]$ to periodic functions, and we may write it as
\begin{equation}\label{eq:def:periodic_other}
\hat{h}_{p,\delta}[F](\tau) = -\frac{2b_{n,\delta}}{\norm{\psi}_2^2}\sum_{k\in\Z}\delta_\frac{k}{2}(\tau)\sum_{\substack{\nu_1\neq \nu_2 \\ \abs{\nu_1}^2- \abs{\nu_2}^2 = k}}\hat{F}(\nu_1)\conj{\hat{F}}(\nu_2)\frac{1}{\abs{\nu_1-\nu_2}^{n+2\delta}}.
\end{equation}
We observe that $h_{p,\delta}$ is a periodic function with period 2.

Lemma~\ref{thm:periodic_h} says that we can recover $h_{\textrm{p},\delta}[F]$ if we remove $h_{\textrm{b},\varepsilon,\delta}[F]$ from $h_\delta[f_\varepsilon]$, multiply by $\varepsilon^{-n}$ and then take the limit as $\varepsilon\to 0$, \textit{i.e.} we can recover $h_{\textrm{p},\delta}[F]$ if we renormalize $h_\delta[f_\varepsilon]$.

Now we investigate the background of $h_\delta[f_\varepsilon]$, which contains the information of $\hat h_\delta[f_\varepsilon]$ around zero.

\begin{lemma}\label{thm:h_background}
Let $F$ be a normalized periodic function in $\R^n/\Z^n$. If $\hat{F}\in\ell^2(\abs{\nu}^{2\delta})$, then the background $h_{\textrm{b},\varepsilon,\delta}[F]$ is
\begin{align}
h_{\textrm{b},\varepsilon,\delta}(t) &= \frac{\varepsilon^n}{\norm{\psi}_2^2}\sum_\nu\abs{\hat{F}(\nu)}^2\int\abs{x}^{2\delta}\abs{e^{-i\varepsilon^2t\hbar\Delta/2}\psi(\varepsilon(x- t\nu))}^2\,dx, \label{eq:thm:h_background}\\
&= \frac{\varepsilon^{-2\delta}}{\norm{\psi}_2^2}\int\abs{x}^{2\delta}\abs{\psi(x)}^2\,dx + o(1),\label{eq:thm:background_limit}
\end{align}
where the error term $o(1)$ is uniform in compact sets.

If $\hat F\in \ell^2(\abs{\nu}^{n+2\delta})$, then
\begin{equation}\label{eq:thm:Background_HF}
\norm{P_{>\frac{1}{4}}h_{\textrm{b},\varepsilon,\delta}}_\infty = o(\varepsilon^n).
\end{equation}
If we assume further that $n = 1$ and $\delta<\frac{1}{2}$, then
\begin{equation}\label{eq:thm:background_n_1}
h_{\textrm{b},\varepsilon,\delta}(t) = \frac{\varepsilon^{-2\delta}}{\norm{\psi}_2^2}\int\abs{x}^{2\delta}\abs{\psi(x)}^2\,dx + o(\varepsilon),
\end{equation}
where the error term $o(\varepsilon)$ is uniform in compact sets.
\end{lemma}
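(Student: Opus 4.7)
The plan is to derive formula \eqref{eq:thm:h_background} as the diagonal part of the Fourier-side representation of $h_\delta[f_\varepsilon]$, and then to establish the three asymptotic claims in turn. First, insert $\hat f_\varepsilon=N_\varepsilon^{-1}\varepsilon^{-n}\sum_\nu\hat F(\nu)\hat\psi((\cdot-\nu)/\varepsilon)$ into the quadratic representation of $h_\delta$ employed at the start of the proof of Theorem~\ref{thm:h_away_zero}. Because $\supp\hat\psi\subset B_1$, for small $\varepsilon$ the translates $\hat\psi((\cdot-\nu)/\varepsilon)$ have pairwise disjoint supports, so the double sum decouples cleanly into a diagonal part ($\nu_1=\nu_2$) and an off-diagonal part; the off-diagonal part is, by \eqref{eq:def:eps_periodic}, exactly $\varepsilon^n h_{\textrm{p},\varepsilon,\delta}[F]$. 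Writing $\hat\psi((\xi-\nu)/\varepsilon)=\varepsilon^n\hat g_\nu(\xi)$ with $g_\nu(x):=\psi(\varepsilon x)e^{2\pi i\nu\cdot x}$, the diagonal reassembles as $\frac{\varepsilon^n}{\norm{\psi}_2^2}\sum_\nu\abs{\hat F(\nu)}^2 h_\delta[g_\nu](t)$. The Galilean identity
\[
e^{it\hbar\Delta/2}g_\nu(x)=e^{2\pi ix\cdot\nu-\pi it\abs{\nu}^2}\,(e^{i\varepsilon^2 t\hbar\Delta/2}\psi)(\varepsilon(x-t\nu))
\]
then produces \eqref{eq:thm:h_background} after taking modulus squared and integrating in $x$.

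For \eqref{eq:thm:background_limit}, I would change variables $y=\varepsilon(x-t\nu)$ in each summand to extract the prefactor $\varepsilon^{-2\delta}$ from $\abs{y/\varepsilon+t\nu}^{2\delta}=\varepsilon^{-2\delta}\abs{y+\varepsilon t\nu}^{2\delta}$, yielding
\[
h_{\textrm{b},\varepsilon,\delta}(t)=\frac{\varepsilon^{-2\delta}}{\norm{\psi}_2^2}\sum_\nu\abs{\hat F(\nu)}^2\int\abs{y+\varepsilon t\nu}^{2\delta}\abs{w(y,t)}^2\,dy,\qquad w(y,t):=(e^{i\varepsilon^2 t\hbar\Delta/2}\psi)(y).
\]
Since $\sum\abs{\hat F(\nu)}^2=1$, the task reduces to showing that the $\nu$-sum of $\int\abs{y+\varepsilon t\nu}^{2\delta}\abs{w}^2\,dy-\int\abs{y}^{2\delta}\abs{\psi}^2\,dy$ is $o(\varepsilon^{2\delta})$ uniformly for $\abs{t}\le T$. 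I would decompose each summand into the $\nu$-independent evolution error $h_\delta[\psi](\varepsilon^2 t)-h_\delta[\psi](0)$, which is $o(\varepsilon^{2\delta})$ uniformly by the H\"older regularity from Theorem~\ref{thm:Lipschitz} applied to the Schwartz datum $\psi$, and a shift error $\int(\abs{y+\varepsilon t\nu}^{2\delta}-\abs{y}^{2\delta})\abs{w}^2\,dy$. The shift error is then summed with a two-scale cutoff at $M_\varepsilon\to\infty$, $\varepsilon M_\varepsilon\to 0$: on $\abs{\nu}\le M_\varepsilon$, the subadditivity $\bigl|\abs{y+a}^{2\delta}-\abs{y}^{2\delta}\bigr|\lesssim\abs{a}^{2\delta}$ (for $2\delta\le 1$) or the symmetrized Taylor bound $O(\abs{a}^2)$ from evenness of $\psi$ (for $2\delta>1$) gives a contribution $o(\varepsilon^{2\delta})$; on $\abs{\nu}>M_\varepsilon$, the bound $\abs{a}^{2\delta}$ together with the tail $\sum_{\abs{\nu}>M_\varepsilon}\abs{\hat F(\nu)}^2\abs{\nu}^{2\delta}=o(1)$ closes the estimate.

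The high-frequency bound \eqref{eq:thm:Background_HF} is the cleanest point. Since $\hat g_\nu$ is supported in $B(\nu,\varepsilon)$, Theorem~\ref{thm:h_away_zero} forces $\supp\hat h_\delta[g_\nu]\subset\{\abs{\tau}\le 2\varepsilon(\abs{\nu}+\varepsilon)\}$. Hence only modes with $\abs{\nu}\gtrsim 1/\varepsilon$ can contribute to $P_{>1/4}h_{\textrm{b},\varepsilon,\delta}$, and for them the $L^1$-bound \eqref{eq:thm:Fourier_h_L1} combined with the elementary estimate $\norm{g_\nu}_{\Sigma_\delta}^2\lesssim\varepsilon^{-n}\abs{\nu}^{2\delta}$ for large $\nu$ gives
\[
\norm{P_{>1/4}h_{\textrm{b},\varepsilon,\delta}}_\infty\lesssim \sum_{\abs{\nu}\gtrsim 1/\varepsilon}\abs{\hat F(\nu)}^2\abs{\nu}^{2\delta}\lesssim\varepsilon^n\sum_{\abs{\nu}\gtrsim 1/\varepsilon}\abs{\hat F(\nu)}^2\abs{\nu}^{n+2\delta}=o(\varepsilon^n),
\]
the last tail vanishing by $\hat F\in\ell^2(\abs{\nu}^{n+2\delta})$. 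The refinement \eqref{eq:thm:background_n_1} comes from sharpening the shift bound in dimension $n=1$, $\delta<1/2$: the evenness of $\abs{w}^2$ permits the symmetrized form $\tfrac{1}{2}\int(\abs{y+a}^{2\delta}+\abs{y-a}^{2\delta}-2\abs{y}^{2\delta})\abs{w}^2\,dy$, and splitting $\abs{y}<\abs{a}$ from $\abs{y}>\abs{a}$ together with $\int_{\abs{y}>\abs{a}}\abs{y}^{2\delta-2}\abs{w}^2\,dy\lesssim\abs{a}^{2\delta-1}$ (valid since $2\delta-1<0$ in this regime) yields $O(\abs{a}^{2\delta+1})$ per term. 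Combined with the reinforced hypothesis $\hat F\in\ell^2(\abs{\nu}^{1+2\delta})$ (the $n=1$ case of $\hat F\in\ell^2(\abs{\nu}^{n+2\delta})$) and a further two-scale splitting, this improves the total error to $o(\varepsilon)$.

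The main obstacle throughout is the delicate balance in \eqref{eq:thm:background_limit} and \eqref{eq:thm:background_n_1} between pointwise-in-$\nu$ smallness of the shift error (which requires $\varepsilon\abs{\nu}\to 0$) and summability for unbounded $\abs{\nu}$; the two-scale cutoff at $M_\varepsilon$ is the mechanism that reconciles these competing demands, and the evenness of $\psi$ is what kills the leading odd term in the shift expansion and produces the sharp exponents.
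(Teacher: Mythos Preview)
Your derivation of \eqref{eq:thm:h_background} and your argument for the high-frequency bound \eqref{eq:thm:Background_HF} are correct and essentially match the paper's (the paper also observes that the diagonal term $A_{\varepsilon,\nu}$ has Fourier support in $\{|\tau|\lesssim \varepsilon\langle\nu\rangle\}$, so only $|\nu|\gtrsim 1/\varepsilon$ contribute, and then extracts the factor $\varepsilon^n$ from the tail of $\ell^2(|\nu|^{n+2\delta})$).

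For \eqref{eq:thm:background_limit} your physical-space strategy is sound but the stated bound has a gap when $2\delta\le 1$: subadditivity gives $\bigl|\int(|y+a|^{2\delta}-|y|^{2\delta})|w|^2\,dy\bigr|\lesssim |a|^{2\delta}$, and after multiplying by $\varepsilon^{-2\delta}$ this yields $\sum_{|\nu|\le M_\varepsilon}|\hat F(\nu)|^2|t\nu|^{2\delta}$, which tends to the \emph{finite nonzero} limit $T^{2\delta}\norm{\hat F}^2_{\ell^2(|\nu|^{2\delta})}$ rather than to $0$. No choice of $M_\varepsilon$ saves this. The fix is to use the evenness of $|w|^2$ for \emph{all} $\delta$, not just $2\delta>1$: the symmetrized kernel gives a genuinely better bound for small $|a|$ (namely $O(|a|^2)$ for $n\ge 2$ or $n=1,\delta\ge 1/2$, and $O(|a|^{2\delta+1})$ for $n=1,\delta<1/2$), which yields pointwise vanishing after $\varepsilon^{-2\delta}$ and uniform domination by $C\langle\nu\rangle^{2\delta}$, so dominated convergence closes the estimate. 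The paper does exactly this dominated-convergence argument, but on the Fourier side: it computes $\inner{\hat A_{\varepsilon,\nu}}{\varphi}$, isolates an error $I_{\varepsilon,\nu}$, and shows both $|I_{\varepsilon,\nu}|\lesssim \varepsilon^{2}\langle\nu\rangle^2\norm{\varphi''}_\infty$ (pointwise) and $|I_{\varepsilon,\nu}|\lesssim(\varepsilon\langle\nu\rangle)^{2\delta}\norm{\varphi}_\infty^{1-\delta}\norm{\varphi''}_\infty^{\delta}$ (domination).

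For \eqref{eq:thm:background_n_1} the gap is more serious. In dimension $n=1$ with $\delta<1/2$ the symmetrized shift error is \emph{exactly} of order $|a|^{2\delta+1}$ (the integral $\int_{|y|>|a|}|y|^{2\delta-2}|w|^2\,dy$ diverges like $|a|^{2\delta-1}$ because $|w(0)|\neq 0$), so after $\varepsilon^{-2\delta}$ the $\nu$-sum is $\varepsilon\sum|\hat F(\nu)|^2|t\nu|^{2\delta+1}=O(\varepsilon)$, not $o(\varepsilon)$; and since $\varepsilon^{-1-2\delta}\cdot|a|^{2\delta+1}=|t\nu|^{2\delta+1}$ does not tend to $0$ for fixed $\nu$, dominated convergence does not help either. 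Your separation into ``pure evolution'' plus ``pure shift'' loses a cancellation. The paper avoids this by working on the Fourier side: the error $I_{\varepsilon,\nu}$ involves $\varphi(0)-\varphi(s)$ with $s=(\varepsilon\tfrac{\xi+\eta}{2}+\nu)\cdot\varepsilon(\eta-\xi)$, and the crucial point is that $s$ carries a factor of $|\eta-\xi|$, so $|\varphi(0)-\varphi(s)|\lesssim\norm{\varphi''}_\infty\varepsilon^2\langle\nu\rangle^2|\eta-\xi|^2$ converts the kernel $|\eta-\xi|^{-1-2\delta}$ into the integrable $|\eta-\xi|^{1-2\delta}$ and gives the quadratic bound $|I_{\varepsilon,\nu}|\lesssim\varepsilon^2\langle\nu\rangle^2$. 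This yields $\varepsilon^{-1-2\delta}|I_{\varepsilon,\nu}|\lesssim\varepsilon^{1-2\delta}\langle\nu\rangle^2\to 0$ pointwise (here $1-2\delta>0$), together with the interpolated domination $\varepsilon^{-1-2\delta}|I_{\varepsilon,\nu}|\lesssim\langle\nu\rangle^{1+2\delta}$, and dominated convergence gives $o(\varepsilon)$. To repair your argument you would need to keep evolution and shift coupled and recover this $|\eta-\xi|$-cancellation, which effectively forces you back to the paper's Fourier-side computation.
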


We can also understand \eqref{eq:thm:h_background} as a self-interaction term since the evolution of $e^{it\hbar\Delta/2}f_\varepsilon$ is
\begin{equation*}
e^{it\hbar\Delta/2}f_\varepsilon = N_\varepsilon^{-1}\sum_{\nu\in\Z^n}\hat F(\nu)e^{2\pi ix\cdot\nu - \pi it\abs{\nu}^2} e^{i\varepsilon^2t\hbar\Delta/2}\psi(\varepsilon(x-t\nu)).
\end{equation*}
Hence, $h_{\textrm{p},\delta}[F]$ represents the sum of the pairwise interaction of different waves.

Equation \eqref{eq:thm:background_limit} says that $\varepsilon^{2\delta}h_{\textrm{b},\varepsilon,\delta}$ tends to a constant function as $\varepsilon\to 0$. Unfortunately, the rate of convergence is not fast enough, so the $\varepsilon$-periodic part $\varepsilon^n h_{\textrm{p},\varepsilon,\delta}$ may be thwarted by the noise in the limit. However, if $F$ is smooth, \textit{i.e.} $\hat F\in \ell^2(\abs{\nu}^{n+2\delta})$, then the high frequencies $P_{>\frac{1}{4}}h_{\textrm{b},\varepsilon,\delta}$ are smaller than $\varepsilon^n h_{\textrm{p},\delta}$ and, in the limit, we can think of $h_\delta[f_\varepsilon]$ as
\begin{equation*}
h_\delta[f_\varepsilon]\approx P_{<\frac{1}{4}}h_{\textrm{b},\varepsilon,\delta} + \varepsilon^n h_{\textrm{p},\delta},
\end{equation*}
where $P_{<\frac{1}{4}}h_{\textrm{b},\varepsilon,\delta}$ is an analytic function which remains essentially constant at scale $2$, while $h_{\textrm{p},\delta}$ is periodic with period 2. This representation offers the possibility of ``watching'' $h_{\textrm{p},\delta}$ numerically as tiny oscillations over a smooth background.

\begin{proof}[Proof of Lemma~\ref{thm:h_background}]
We begin with the proof of \eqref{eq:thm:h_background}. Since $\supp\hat{\psi}\subset B_1$, we can write $h_\delta[f_\varepsilon]$ as
\begin{align*}
h_\delta(t) &= b_{n,\delta}\int\frac{\abs{e^{-\pi it\abs{\xi}^2}\hat{f}_\varepsilon(\xi)-e^{-\pi it\abs{\eta}^2}\hat{f}_\varepsilon(\eta)}^2}{\abs{\xi-\eta}^{n+2\delta}}\,d\xi d\eta \\
&= \frac{b_{n,\delta}}{\varepsilon^n\norm{\psi}_2^2}\sum_{\nu}\abs{\hat{F}(\nu)}^2 \\
&\hspace*{0.5cm} \int \Big|e^{-\pi it\abs{\xi}^2}\hat{\psi}\Big(\frac{\xi-\nu}{\varepsilon}\Big)-e^{-\pi it\abs{\eta}^2}\hat{\psi}\Big(\frac{\eta-\nu}{\varepsilon}\Big)\Big|^2\frac{d\xi d\eta}{\abs{\xi-\eta}^{n+2\delta}} + \varepsilon^n h_{p,\varepsilon,\delta}(t) \\
&= \frac{1}{\varepsilon^n\norm{\psi}_2^2}\sum_\nu\abs{\hat{F}(\nu)}^2\int\abs{x}^{2\delta}\abs{e^{-it\hbar\Delta/2}(e^{2\pi i\nu\cdot y}\psi_\varepsilon)(x)}^2\,dx + \varepsilon^n h_{p,\varepsilon,\delta}(t),
\end{align*}
where $e^{2\pi i\nu\cdot y}\psi_\varepsilon(y) := \varepsilon^n e^{2\pi i\nu\cdot y}\psi(\varepsilon y)$, so
\begin{equation*}
\abs{e^{-it\hbar\Delta/2}(e^{2\pi i\nu\cdot y}\psi_\varepsilon)(x)} = \varepsilon^n\abs{(e^{-i\varepsilon^2 t\hbar\Delta/2}\psi)(\varepsilon(x-t\nu))}.
\end{equation*}
We replace it above to get
\begin{equation*}
h_\delta(t) = \frac{\varepsilon^n}{\norm{\psi}_2^2}\sum_\nu\abs{\hat{F}(\nu)}^2\int\abs{x}^{2\delta}\abs{(e^{-i\varepsilon^2t\hbar\Delta/2}\psi)(\varepsilon(x- t\nu))}^2\,dx + \varepsilon^n h_{p,\varepsilon,\delta}(t),
\end{equation*}
which implies \eqref{eq:thm:h_background} by the definition of $h_{\textrm{b},\varepsilon,\delta}$; see \eqref{eq:def:eps_background}.

Let us define
\begin{align*}
A_{\varepsilon,\nu}(t) := b_{n,\delta}\int \abs{e^{-\pi i\varepsilon^2t\abs{\xi}^2}\hat{\psi}(\xi-\varepsilon^{-1}\nu)-e^{-\pi i\varepsilon^2 t\abs{\eta}^2}\hat{\psi}(\eta-\varepsilon^{-1}\nu)}^2\frac{d\xi d\eta}{\abs{\xi-\eta}^{n+2\delta}}
\end{align*}
so that 
\begin{equation}\label{eq:background_short}
h_{\textrm{b},\varepsilon,\delta}(t) = \frac{\varepsilon^{-2\delta}}{\norm{\psi}^2_2}\sum_\nu\abs{\hat F(\nu)}^2A_{\varepsilon,\nu}(t).
\end{equation}
Since $\psi$ is real and symmetric, the Fourier transform of $A_{\varepsilon,\nu}$ is symmetric and then we can restrict ourselves to symmetric test functions, so 
\begin{align}
\inner{\hat A_{\varepsilon,\nu}}{\varphi} &= b_{n,\delta}\int\Big[\abs{\hat \psi(\xi-\varepsilon^{-1}\nu)}^2\varphi(0) + \abs{\hat \psi(\eta-\varepsilon^{-1}\nu)}^2\varphi(0) - \notag \\
&\hspace*{2cm} - 2\varphi\Big(\varepsilon^2\frac{\abs{\eta}^2-\abs{\xi}^2}{2}\Big)\hat{\psi}(\xi-\varepsilon^{-1}\nu)\hat{\psi}(\eta-\varepsilon^{-1}\nu)\Big]\frac{d\xi d\eta}{\abs{\xi-\eta}^{n+2\delta}} \notag \\
&= \varphi(0)\int\abs{x}^{2\delta}\abs{\psi}^2\,dx + \notag \\ 
&\hspace*{0.5cm} + 2b_{n,\delta}\int\Big[\varphi(0)-\varphi\Big(\big(\varepsilon\frac{\eta+\xi}{2} + \nu\big)\cdot\varepsilon(\eta-\xi)\Big)\Big]\hat{\psi}(\xi)\hat{\psi}(\eta)\frac{d\xi d\eta}{\abs{\xi-\eta}^{n+2\delta}} \label{eq:A_nu_fourier}
\end{align}

We use a test function supported in $\R\setminus[-a,a]$ to bound $\hat A_{\varepsilon,\nu}$ away from the origin as
\begin{equation*}
\abs{\inner{\hat A_{\varepsilon,\nu}}{\varphi}}\le C\int\abs{\hat{\psi}(\xi)\hat{\psi}(\eta)\varphi\Big(\big(\varepsilon\frac{\eta+\xi}{2}+\nu\big)\cdot\varepsilon(\eta-\xi)\Big)}\frac{d\xi d\eta}{\abs{\xi-\eta}^{n+2\delta}}.
\end{equation*}
If $\abs{\nu}\le a\varepsilon^{-1}/4$ then $\inner{\hat A_{\varepsilon,\nu}}{\varphi} = 0$ because $\varphi(t) = 0$ when $\abs{t}\le a$. Otherwise, we change variables and bound the integral as
\begin{align*}
\abs{\inner{\hat A_{\varepsilon,\nu}}{\varphi}} &\le C\int\abs{\hat{\psi}(\xi)\hat{\psi}(\eta)\varphi\Big(\big(\varepsilon u +\nu\big)\cdot\varepsilon v\Big)}\frac{du dv}{\abs{v}^{n+2\delta}} \\
&\le C\norm{\varphi}_\infty\int_{\abs{v}>a/(2\abs{\nu}\varepsilon)}\frac{dv}{\abs{v}^{n+2\delta}} \\
&\le C\norm{\varphi}_\infty\Big(\frac{\abs{\nu}\varepsilon}{a}\Big)^{2\delta}.
\end{align*}
Hence, by Hausdorff-Young inequality $\norm{P_{>a}A_{\varepsilon,\nu}}_\infty \lesssim_a (\abs{\nu}\varepsilon)^{2\delta}$ and then, by \eqref{eq:background_short},
\begin{equation*}
\norm{P_{>a}h_{\textrm{b},\varepsilon,\delta}}_\infty \lesssim_a \sum_{\abs{\nu}>a\varepsilon^{-1}/2}\abs{\hat F(\nu)}^2\abs{\nu}^{2\delta} = o_a(1);
\end{equation*}
to prove \eqref{eq:thm:background_limit} it remains to estimate $P_{<\frac{1}{4}}h_{\textrm{b},\varepsilon,\delta}$. If we assume further that $\hat F\in \ell^2(\abs{\nu}^{n+2\delta})$, then we can state the stronger upper bound$\norm{P_{>a}h_{\textrm{b},\varepsilon,\delta}}_\infty = o_a(\varepsilon^n)$, which is \eqref{eq:thm:Background_HF}.

We turn now to the term $P_{<\frac{1}{4}}h_{\textrm{b},\varepsilon,\delta}$. We will prove that 
\begin{equation}\label{eq:background_low_limit}
\inner{\hat h_{\textrm{b},\varepsilon,\delta}}{\varphi} - \varphi(0)\varepsilon^{-2\delta}\norm{\psi}_2^{-2}\int\abs{x}^{2\delta}\abs{\psi}^2\,dx \to 0,
\end{equation}
which implies \eqref{eq:thm:background_limit} after replacing $\varphi$ by the test function $\tau\mapsto \psi(\tau)\cos(2\pi t\tau)$, where $\psi$ is a symmetric cut-off of $[-1/4,1/4]$; the bounds will be uniform in $t$ if $\abs{t}\le T$, so the convergence is uniform in compact sets.

From \eqref{eq:A_nu_fourier} we see that $\varepsilon^{-2\delta}(\inner{\hat A_{\varepsilon,\nu}}{\varphi} - \varphi(0)\int\abs{x}^{2\delta}\abs{\psi}^2\,dx)\to 0$ as $\varepsilon\to 0$. In fact,
\begin{align*}
\varepsilon^{-2\delta}\abs{I_{\varepsilon,\nu}} &:= \varepsilon^{-2\delta}\abs{\inner{\hat A_{\varepsilon,\nu}}{\varphi} - \varphi(0)\int\abs{x}^{2\delta}\abs{\psi}^2\,dx} \\
&\le C\varepsilon^{-2\delta}\int_{\abs{u},\abs{v}<1}\abs{\varphi(0)-\varphi\Big(\big(\varepsilon \frac{\eta+\xi}{2}+\nu\big)\cdot\varepsilon (\eta-\xi)\Big)}\frac{d\xi d\eta}{\abs{\eta-\xi}^{n+2\delta}} \\
&\le C\varepsilon^{2-2\delta}\norm{\varphi''}_\infty
\end{align*}
and the last term tends to zero, so the claim follows. 

To prove \eqref{eq:background_low_limit}, and so \eqref{eq:thm:background_limit}, it suffices to show that $\varepsilon^{-2\delta}\abs{\hat F(\nu)}^2\abs{I_{\varepsilon,\nu}}$ is uniformly dominated in $\varepsilon$ by an integrable (summable) function; recall \eqref{eq:background_short} and $\norm{F}_2 = 1$. To control $I_{\varepsilon,\nu}$, we change variables and bound the integral as
\begin{align*}
\abs{I_{\varepsilon,\nu}} &\le C\int_{\abs{u},\abs{v}<1}\abs{\varphi(0)-\varphi((\varepsilon u+\nu)\cdot\varepsilon v)}\frac{du dv}{\abs{v}^{n+2\delta}} \\
&\lesssim \norm{\varphi''}_\infty\varepsilon^2\japan{\nu}^2\int_{\abs{v}<r}\frac{dv}{\abs{v}^{n-2+2\delta}} + \norm{\varphi}_\infty\int_{r<\abs{v}<1}\frac{dv}{\abs{v}^{n+2\delta}} \\
&\lesssim \norm{\varphi''}_\infty\varepsilon^2\japan{\nu}^2 \min\{r^{2(1-\delta)},1\} + \norm{\varphi}_\infty r^{-2\delta}\ind_{r<1}.
\end{align*}
When $(\varepsilon\japan{\nu})^2<\norm{\varphi}_\infty/\norm{\varphi''}_\infty$ we choose $r=1$ and we get
\begin{equation*}
\abs{I_{\varepsilon,\nu}}\le C\norm{\varphi''}_\infty\varepsilon^2\japan{\nu}^2\le C(\varepsilon\japan{\nu})^{2\delta}\norm{\varphi}_\infty^{1-\delta}\norm{\varphi''}_\infty^\delta.
\end{equation*}
When $(\varepsilon\japan{\nu})^2>\norm{\varphi}_\infty/\norm{\varphi''}_\infty$ we choose $r^2 = \norm{\varphi}_\infty/(\varepsilon^2\japan{\nu}^2\norm{\varphi''}_\infty)$ and we get
\begin{equation*}
\abs{I_{\varepsilon,\nu}}\le C(\varepsilon\japan{\nu})^{2\delta}\norm{\varphi}_\infty^{1-\delta}\norm{\varphi''}_\infty^\delta.
\end{equation*}
Thus, we have that $\varepsilon^{-2\delta}\abs{\hat F(\nu)}^2\abs{I_{\varepsilon,\nu}}\le C\norm{\varphi}_\infty^{1-\delta}\norm{\varphi''}_\infty^\delta\abs{\hat F(\nu)}^2\japan{\nu}^{2\delta}$, and \eqref{eq:background_low_limit} follows by dominated convergence.

The proof of \eqref{eq:thm:background_n_1} goes along the same lines, but the new hypotheses are $n = 1$, $\delta<\frac{1}{2}$ and $\norm{\abs{\nu}^{\frac{1}{2}+\delta}\hat F}_{\ell^2}<\infty$. Since $\delta<\frac{1}{2}$ we have that $\varepsilon^{-1-2\delta}(\inner{\hat A_{\varepsilon,\nu}}{\varphi} - \varphi(0)\int\abs{x}^{2\delta}\abs{\psi}^2\,dx)\to 0$ as $\varepsilon\to 0$, so it suffices to show that $\varepsilon^{-1-2\delta}\abs{\hat F(\nu)}^2\abs{I_{\varepsilon,\nu}}$ is uniformly dominated in $\varepsilon$ by an integrable function.

The previous bounds of $\abs{I_{\varepsilon,\nu}}$ lead to
\begin{equation*}
\abs{I_{\varepsilon,\nu}}\le C\norm{\varphi''}_\infty\varepsilon^2\japan{\nu}^2\le C(\varepsilon\japan{\nu})^{1+2\delta}\norm{\varphi}_\infty^{\frac{1}{2}-\delta}\norm{\varphi''}_\infty^{\frac{1}{2}+\delta},
\end{equation*}
when $(\varepsilon\japan{\nu})^2<\norm{\varphi}_\infty/\norm{\varphi''}_\infty$, and
\begin{equation*}
\abs{I_{\varepsilon,\nu}}\le C(\varepsilon\japan{\nu})^{2\delta}\norm{\varphi}_\infty^{1-\delta}\norm{\varphi''}_\infty^\delta \le C(\varepsilon\japan{\nu})^{1+2\delta}\norm{\varphi}_\infty^{\frac{1}{2}-\delta}\norm{\varphi''}_\infty^{\frac{1}{2}+\delta},
\end{equation*}
when $(\varepsilon\japan{\nu})^2>\norm{\varphi}_\infty/\norm{\varphi''}_\infty$. Therefore, 
\begin{equation*}
\varepsilon^{-1-2\delta}\abs{\hat F(\nu)}^2\abs{I_{\varepsilon,\nu}}\le C\japan{\nu}^{1+2\delta}\norm{\varphi}_\infty^{\frac{1}{2}-\delta}\norm{\varphi''}_\infty^{\frac{1}{2}+\delta}.
\end{equation*}
By dominated convergence again we get \eqref{eq:thm:background_n_1}.
\end{proof}

Recall that our main interest is the Talbot effect in $n = 1$, so \eqref{eq:thm:background_n_1} in Lemma~\ref{thm:h_background} provides the convenient asymptotic representation
\begin{equation*}
h_\delta(t) = \frac{\varepsilon^{-2\delta}}{\norm{\psi}_2^2}\int\abs{x}^{2\delta}\abs{\psi(x)}^2\,dx +\varepsilon h_{p,\delta}(t) + o(\varepsilon),
\end{equation*}
as long as $\hat F\in \ell^2(\abs{\nu}^{1+2\delta})$ and $\delta<\frac{1}{2}$.

We summarize our main findings in the following theorem.

\begin{theorem}
Let $F$ be a normalized periodic function with period 1 in $\R^n$---recall the definition of $f_\varepsilon$ in \eqref{eq:approach_periodic}. 

If $\hat F\in \ell^2(\abs{\nu}^{2\delta})$, then
\begin{equation}
h_\delta[f_\varepsilon](t) = \frac{\varepsilon^{-2\delta}}{\norm{\psi}_2^2}\int\abs{x}^{2\delta}\abs{\psi(x)}^2\,dx + o(1).
\end{equation}

If $\hat F\in \ell^2(\abs{\nu}^{n+2\delta})$, then
\begin{equation}\label{eq:thm:asymptotic_Smooth_F}
h_\delta[f_\varepsilon](t) = P_{<\frac{1}{4}}h_{\textrm{b},\varepsilon,\delta}(t) + \varepsilon^n h_{\textrm{p},\delta}(t) + o(\varepsilon^n).
\end{equation}

If $n = 1$, $\delta<\frac{1}{2}$ and $\hat F\in \ell^2(\abs{\nu}^{1+2\delta})$, then
\begin{equation}\label{eq:thm:asymptotic_Smooth_F_1D}
h_\delta[f_\varepsilon](t) = \frac{\varepsilon^{-2\delta}}{\norm{\psi}_2^2}\int\abs{x}^{2\delta}\abs{\psi(x)}^2\,dx +\varepsilon h_{p,\delta}(t) + o(\varepsilon).
\end{equation}
The error terms in all the limits are uniform in compact sets of $\R$.
\end{theorem}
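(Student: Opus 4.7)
The proof is essentially a bookkeeping exercise that assembles the two preceding lemmas. The starting point is the decomposition
\begin{equation*}
h_\delta[f_\varepsilon] = h_{\textrm{b},\varepsilon,\delta}[F] + \varepsilon^n h_{\textrm{p},\varepsilon,\delta}[F]
\end{equation*}
built into the definition \eqref{eq:def:eps_background}, so all three statements reduce to controlling each summand to the appropriate order.

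For the first asymptotic, the hypothesis $\hat F \in \ell^2(\abs{\nu}^{2\delta})$ is precisely the one of \eqref{eq:thm:background_limit}, which delivers $h_{\textrm{b},\varepsilon,\delta}(t) = \varepsilon^{-2\delta}\norm{\psi}_2^{-2}\int\abs{x}^{2\delta}\abs{\psi}^2\,dx + o(1)$ uniformly on compact sets. On the periodic side, Lemma~\ref{thm:periodic_h} produces a bounded limit $h_{\textrm{p},\delta}[F]$ together with uniform convergence on compacts, so $\varepsilon^n h_{\textrm{p},\varepsilon,\delta}(t) = \BigO(\varepsilon^n) = o(1)$ there. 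Adding the two estimates yields the first claim.

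For the second asymptotic, the stronger assumption $\hat F \in \ell^2(\abs{\nu}^{n+2\delta})$ matches the hypothesis of \eqref{eq:thm:Background_HF}, which pushes the background's high-frequency tail below the target scale: $\norm{P_{>1/4}h_{\textrm{b},\varepsilon,\delta}}_\infty = o(\varepsilon^n)$, hence $h_{\textrm{b},\varepsilon,\delta} = P_{<1/4}h_{\textrm{b},\varepsilon,\delta} + o(\varepsilon^n)$ uniformly. Combined with $\varepsilon^n h_{\textrm{p},\varepsilon,\delta} = \varepsilon^n h_{\textrm{p},\delta} + o(\varepsilon^n)$ from Lemma~\ref{thm:periodic_h}, this proves \eqref{eq:thm:asymptotic_Smooth_F}. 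The third asymptotic is the same pattern in the range $n=1$, $\delta<\tfrac{1}{2}$, $\hat F \in \ell^2(\abs{\nu}^{1+2\delta})$, where the sharper background estimate \eqref{eq:thm:background_n_1} replaces \eqref{eq:thm:background_limit} and supplies the improved remainder $o(\varepsilon)$; adding $\varepsilon h_{\textrm{p},\varepsilon,\delta} = \varepsilon h_{\textrm{p},\delta} + o(\varepsilon)$ gives \eqref{eq:thm:asymptotic_Smooth_F_1D}.

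There is no real obstacle beyond matching each hypothesis on $\hat F$ with the correct estimate for $h_{\textrm{b},\varepsilon,\delta}$ in Lemma~\ref{thm:h_background}. The one point to watch is that all error terms remain uniform in $t$ over compact subsets of $\R$, so that the additive decomposition survives the limit $\varepsilon\to 0$; however, both lemmas are already stated with this uniformity, so no further argument is needed.
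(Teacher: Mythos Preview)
Your proposal is correct and matches the paper's approach exactly: the theorem is stated there explicitly as a summary of the preceding lemmas, with no separate proof given, and your write-up simply spells out how the decomposition $h_\delta[f_\varepsilon] = h_{\textrm{b},\varepsilon,\delta} + \varepsilon^n h_{\textrm{p},\varepsilon,\delta}$ combines with the relevant estimates from Lemmas~\ref{thm:periodic_h} and~\ref{thm:h_background}. The matching of each hypothesis on $\hat F$ to the corresponding background estimate is exactly the content the paper leaves implicit.
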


In Figure~\ref{fig:Asymptotic} we saw how convenient is \eqref{eq:thm:asymptotic_Smooth_F_1D} to visualize $h_{\textrm{p},\delta}$ numerically.

\subsection{The Dirac comb}\label{sec:Dirac_Comb}

Now that we have succeeded in defining a functional $h_{\textrm{p},\delta}[F]$ for a periodic function $F$, we want to pass again to the limit to study the Dirac comb, \textit{i.e.} the periodic distribution $F_D(x) := \sum_{m\in\Z} \delta(x-m)$ in $\R$.

To approach the Dirac comb in $\R$ we use the function
\begin{equation*}
F_{\varepsilon_1} := \sum_{m\in\Z}\varepsilon_1^{-1}e^{-\pi ((x-m)/\varepsilon_1)^2} = \sum_{m\in\Z} e^{-\pi(\varepsilon_1m)^2}e^{2\pi i x m},
\end{equation*} 
and define so the approximation 
\begin{equation*}
f_{\varepsilon_1,\varepsilon_2}(x) := N_{\varepsilon_2}^{-1} \psi(\varepsilon_2 x)\norm{F_{\varepsilon_1}}_2^{-1}F_{\varepsilon_1},
\end{equation*}
where $N_{\varepsilon_2}$ is the normalization constant of $f_{\varepsilon_1,\varepsilon_2}$. Since the periodic function $F_{\varepsilon_1}$, for $\varepsilon_1$ fixed, is smooth, then from \eqref{eq:thm:asymptotic_Smooth_F} we see that $h_\delta[f_{\varepsilon_1,\varepsilon_2}]$ splits into a smooth background and an oscillating, periodic function $h_{\textrm{p},\delta}[F_{\varepsilon_1}]$ when $\varepsilon_2\to 0$. 


We use \eqref{eq:def:periodic}, or \eqref{eq:def:periodic_other}, to see that
\begin{equation}\label{eq:periodic_Dirac_Comb}
\hat h_{\textrm{p},\delta}[F_{\varepsilon_1}](\tau) = -\frac{2b_{1,\delta}}{\norm{\psi}_2^2}\sum_{k\in\Z}\delta_\frac{k}{2}(\tau)\sum_{\substack{m_1\neq m_2 \\ \abs{m_1}^2- \abs{m_2}^2 = k}}\hat{F}_{\varepsilon_1}(m_1)\hat{F}_{\varepsilon_1}(m_2)\frac{1}{\abs{m_1-m_2}^{n+2\delta}}.
\end{equation}
At this stage, we let $\varepsilon_1$ go to zero and take the weak limit of $\hat h_{\textrm{p},\delta}[F_{\varepsilon_1}]$ to get the distribution
\begin{equation*}
\hat{h}_{\textrm{p},\delta}[F_D](\tau) := -\frac{2b_{1,\delta}}{\norm{\psi}_2^2}\sum_k\delta_\frac{k}{2}(\tau)\sum_{\substack{m_1\neq m_2 \\ m^2_1-m^2_2 = k}}\frac{1}{\abs{m_1-m_2}^{1+2\delta}},
\end{equation*}
which is our definition of periodic $h_\delta$ for the Dirac comb $F_D$. Surprisingly, $h_{\textrm{p},\delta}[F_D]$ is a pure point measure; to see this, we have to compute the coefficients of $\hat{h}_{\textrm{p},\delta}[F_D]$.

\begin{lemma}
\begin{equation}
\sum_{\substack{m_1\neq m_2 \\ m^2_1-m^2_2 = k}}\frac{1}{\abs{m_1-m_2}^{1+2\delta}} = \begin{dcases}
2\sum_{\substack{d\mid k \\ d>0}}\frac{1}{d^{1+2\delta}} & \textrm{for } k\in\Z \textrm{ odd} \\
\frac{1}{2^{2\delta}}\sum_{\substack{4d\mid k \\ d>0}}\frac{1}{d^{1+2\delta}} & \textrm{for } k\equiv 0\,\Mod 4 \\
0 & \textrm{for } k\equiv 2\,\Mod 4
\end{dcases}
\end{equation}
\end{lemma}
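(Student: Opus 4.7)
The plan is to parametrize the pairs $(m_1,m_2)\in\Z^2$ with $m_1^2-m_2^2=k$ by their sum and difference. Set
\begin{equation*}
a := m_1-m_2, \qquad b := m_1+m_2,
\end{equation*}
so that $ab = k$ and $m_1 = (a+b)/2$, $m_2 = (b-a)/2$. The map $(m_1,m_2)\mapsto(a,b)$ is a bijection between $\Z^2$ and the sub-lattice $\{(a,b)\in\Z^2 : a\equiv b\Mod 2\}$, since $m_1,m_2\in\Z$ exactly when $a$ and $b$ have the same parity. The condition $m_1\neq m_2$ becomes $a\neq 0$. Thus the sum on the left equals
\begin{equation*}
\sum_{\substack{ab=k,\; a\neq 0 \\ a\equiv b\,(\textrm{mod } 2)}}\frac{1}{\abs{a}^{1+2\delta}}.
\end{equation*}

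Now split according to $k\Mod 4$. If $k$ is odd, every factorization $k=ab$ has both $a$ and $b$ odd automatically, so the parity condition is free; we recover $\sum_{a\mid k,\,a\neq 0}\abs{a}^{-(1+2\delta)} = 2\sum_{d\mid k,\,d>0} d^{-(1+2\delta)}$. If $k\equiv 2\Mod 4$, then in any factorization $k=ab$ exactly one of $a,b$ carries the single factor of $2$, so they must have opposite parities and no admissible pair exists; the sum vanishes. If $k\equiv 0\Mod 4$, write $k=4k'$ and rescale $a=2a'$, $b=2b'$: the constraint becomes $a'b'=k'$ with $a'\in\Z\setminus\{0\}$ (no parity restriction on $a',b'$), and
\begin{equation*}
\sum_{\substack{a'b'=k' \\ a'\neq 0}}\frac{1}{(2\abs{a'})^{1+2\delta}} = \frac{2}{2^{1+2\delta}}\sum_{\substack{d\mid k' \\ d>0}}\frac{1}{d^{1+2\delta}} = \frac{1}{2^{2\delta}}\sum_{\substack{4d\mid k \\ d>0}}\frac{1}{d^{1+2\delta}}.
\end{equation*}

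The only substantive step is the parity dichotomy, which is just the classical observation that the change of variables $(m_1,m_2)\leftrightarrow(a,b)$ identifies the set of integer lattice points with the even-diagonal sub-lattice; the rest is bookkeeping on divisors. There is no real obstacle — the main thing to be careful with is the sign, namely that summing $a$ over all nonzero divisors (positive and negative) of $k$ produces the factor of $2$ in the odd case, whereas the $k\equiv 0\Mod 4$ case already absorbs this factor in the rescaling.
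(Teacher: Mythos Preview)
Your proof is correct and follows essentially the same approach as the paper: the same change of variables $a=m_1-m_2$, $b=m_1+m_2$ (the paper writes $d,e$) together with the parity constraint $a\equiv b\Mod 2$, after which the three cases are read off. You have in fact spelled out the case analysis more explicitly than the paper, which simply records the identity $\sum = 2\sum_{d\equiv e\,(2),\,de=k,\,d>0} d^{-(1+2\delta)}$ and leaves the rest to the reader.
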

\textit{Remark.} In number theory notation, for $k$ odd the coefficients are $2\sigma_{-1-2\delta}(k)$, and for $k\equiv 0\,\Mod 4$ the coefficients are $2^{-2\delta}\sigma_{-1-2\delta}(k/4)$.
\begin{proof}
We write $m_1^2-m_2^2 = (m_1-m_2)(m_1+m_2) := de = k$, so necessarily $d\mid k$. On the other hand, we have $m_1 = \frac{1}{2}(e+d)$ and $m_2 = \frac{1}{2}(e-d)$, so $d$ and $e$ have the same parity, \textit{i.e.} $d\equiv e\, \Mod 2$. Consequently, 
\begin{equation*}
\sum_{\substack{m_1\neq m_2 \\ m^2_1-m^2_2 = k}}\frac{1}{\abs{m_1-m_2}^{1+2\delta}} = 2\sum_{\substack{d\equiv e \,\Mod 2 \\ de = k, d>0}}\frac{1}{d^{1+2\delta}},
\end{equation*}
from which the Lemma follows.
\end{proof}

\begin{customthm}{\ref{thm:limit_FD}}
{\it
\begin{align}
\begin{split}\label{eq:thm:h_Talbot}
h_{\textrm{p},\delta}[F_D](2t) &= -\frac{2b_{1,\delta}}{\norm{\psi}_2^2}\zeta(2(1+\delta))\Big[\sum_{\substack{(p,q)=1 \\ q>0\textrm{ odd}}}\frac{1}{q^{2(1+\delta)}}\delta_\frac{p}{q}(t) - \\
&\hspace*{0.5cm}- \sum_{\substack{(p,q)=1 \\ q\equiv 2\,\Mod 4}}\frac{2(2^{1+2\delta}-1)}{q^{2(1+\delta)}}\delta_\frac{p}{q}(t) + \sum_{\substack{(p,q)=1 \\ q\equiv 0\,\Mod 4}}\frac{2^{2(1+\delta)}}{q^{2(1+\delta)}}\delta_\frac{p}{q}(t)\Big],
\end{split} \\
&:= \sum_{\substack{(p,q)=1 \\ q>0}}\frac{a_{\delta,q}}{q^{2(1+\delta)}}\delta_\frac{p}{q}(t) \label{eq:def:a_h_Talbot}
\end{align}
where $\zeta(z)$ is the Riemann zeta function.
}
\end{customthm}
\begin{proof}
We split $\hat{h}_{\textrm{p},\delta}[F_D]$ into 
\begin{align*}
\hat{h}_{\textrm{p},\delta}^\textrm{odd}(\tau) &:= 2\sum_{k\textrm{ odd}}\sigma_{-1-2\delta}(k)\delta_\frac{k}{2}(\tau) \\ 
\hat{h}_{\textrm{p},\delta}^\textrm{even}(\tau) &:= \frac{1}{2^{2\delta}}\sum_{k\equiv 0\,\Mod 4}\sigma_{-1-2\delta}(k/4)\delta_\frac{k}{2}(\tau).
\end{align*}
We rearrange the terms in the sum of the odd part so that
\begin{equation*}
\hat{h}_{\textrm{p},\delta}^\textrm{odd}(\tau) = 2\sum_{k\textrm{ odd}}\Big(\sum_{d\mid k} \frac{1}{d^{1+2\delta}}\Big)\delta_\frac{k}{2}(\tau) = 2\sum_{d>0 \textrm{ odd}}\frac{1}{d^{1+2\delta}}\sum_{l\textrm{ odd}}\delta_{\frac{dl}{2}}(\tau).
\end{equation*}
The very last sum is a Dirac comb supported on the arithmetic progression $\{l\textrm{ odd}\mid dl/2\}$, so the inverse Fourier transform of $\hat{h}_{\textrm{p},\delta}^\textrm{odd}$ is
\begin{align*}
h_{\textrm{p},\delta}^\textrm{odd}(t) &= 2\sum_{d>0\textrm{ odd}}\frac{1}{d^{2(1+\delta)}}\sum_{l\in\Z}(-1)^l\delta_\frac{l}{d}(t) \\
&= 2\sum_{\substack{(p,q)=1 \\ q>0}}\delta_\frac{p}{q}(t)\sum_{\substack{d>0\textrm{ odd}, l \\ l/d = p/q}} \frac{(-1)^l}{d^{2(1+\delta)}}.
\end{align*}
Since $q\mid d$ and $p\mid l$, then
\begin{equation*}
h_{\textrm{p},\delta}^\textrm{odd}(t) = 2\sum_{\substack{(p,q)=1 \\ q>0\textrm{ odd}}}\delta_\frac{p}{q}(t)\frac{(-1)^p}{q^{2(1+\delta)}}\sum_{r>0\textrm{ odd}}\frac{1}{r^{2(1+\delta)}}.
\end{equation*}
We follow a similar argument to evaluate the even part
\begin{align*}
\hat{h}_{\textrm{p},\delta}^\textrm{even}(\tau) &= \frac{1}{2^{2\delta}}\sum_{k\in\Z,\,d\mid k}\delta_{2k}(\tau)\frac{1}{d^{1+2\delta}} = \frac{1}{2^{2\delta}}\sum_{d>0}\frac{1}{d^{1+2\delta}}\sum_{l\in\Z}\delta_{2dl}(\tau);
\end{align*}
hence, the inverse Fourier transform of $\hat{h}_{\textrm{p},\delta}^\textrm{even}$ is 
\begin{align*}
h_{\textrm{p},\delta}^\textrm{even}(t) &= \frac{1}{2^{1+2\delta}}\sum_{d>0}\frac{1}{d^{2(1+\delta)}}\sum_{l\in\Z}\delta_\frac{l}{2d}(t) \\
&= 2\sum_{\substack{(p,q)=1 \\ q>0}}\delta_\frac{p}{q}(t)\sum_{\substack{d>0\textrm{ even},l \\ l/d = p/q}}\frac{1}{d^{2(1+\delta)}} \\
&= 2\Big[\sum_{\substack{(p,q)=1 \\ q>0\textrm{ odd}}} \delta_\frac{p}{q}(t)\frac{1}{q^{2(1+\delta)}}\sum_{r>0\textrm{ even}}\frac{1}{r^{2(1+\delta)}} + \\
&\hspace*{5cm} +\sum_{\substack{(p,q)=1 \\ q>0\textrm{ even}}} \delta_\frac{p}{q}(t)\frac{1}{q^{2(1+\delta)}}\sum_{r>0}\frac{1}{r^{2(1+\delta)}}\Big].
\end{align*}
We sum $h_{\textrm{p},\delta}^\textrm{even}$ and $h_{\textrm{p},\delta}^\textrm{odd}$ to conclude that
\begin{multline*}
h_{\textrm{p},\delta}(t) = -\frac{4b_{1,\delta}}{\norm{\psi}^2_2}\Big[\sum_{\substack{p\textrm{ even} \\ q>0\textrm{ odd}}}\frac{\zeta(2(1+\delta))}{q^{2(1+\delta)}}\delta_\frac{p}{q}(t) + \sum_{\substack{(p,q)=1 \\ q>0,p\textrm{ odds}}}\frac{-\eta(2(1+\delta))}{q^{2(1+\delta)}}\delta_\frac{p}{q}(t) + \\ 
+\sum_{\substack{(p,q)=1 \\ q>0\textrm{ even}}}\frac{\zeta(2(1+\delta))}{q^{2(1+\delta)}}\delta_\frac{p}{q}(t)\Big].
\end{multline*} 
where $\eta(z) = -\sum_{n>0}(-1)^n/n^z = (1-2^{1-z})\zeta(z)$ is the Dirichlet eta function. Finally, we dilate and rearrange the terms so as to get \eqref{eq:thm:h_Talbot}.
\end{proof}


As a side remark, we notice that in higher dimensions the limit of \eqref{eq:periodic_Dirac_Comb} as $\varepsilon_1\to 0^+$ does not exit. In the limit, the Fourier coefficients would be 
\begin{equation*}
r_k = C_\delta\sum_{\substack{m_1\neq m_2 \\ \abs{m_1}^2- \abs{m_2}^2 = k}}\frac{1}{\abs{m_1-m_2}^{n+2\delta}}. \end{equation*}
We define the new variables $l_1 = m_1 - m_2$ and $l_2 = m_1 + m_2$ so that
\begin{equation*}
r_k = C_\delta\sum_{\substack{l_1\neq 0,\,l_1\equiv l_2\Mod{2} \\ l_1\cdot l_2 = k}} \frac{1}{\abs{l_1}^{n+2\delta}}.
\end{equation*}
For $l_1$ fixed, since $l_1\cdot l_2 = k$ has either no solution or infinitely many, it follows that $r_k = \infty$ for every integer $k$. 

To study the function $h_{\textrm{p},\delta}[F_D]$ we deem it appropriate to consider its primitive
\begin{equation}\label{eq:primitive_H}
H_\delta(t) := \int_{[0,t]}h_{\textrm{p},\delta}[F_D](2s)\,ds;
\end{equation}
This function is right-continuous, the limits from the left exist, has jumps at rational times and is continuous elsewhere. We think that $H_\delta$ can be seen as a realization of some stochastic process when $t\in [0,1)$. We do not consider $t>1$ because the derivative of a random process is almost surely non-periodic. We will review briefly some aspects of Lévy processes.

We start defining Poisson point processes, so we have to introduce point functions
\begin{equation*}
p:D_p\subset (0,\infty) \to X,
\end{equation*}
where $D_p$ is countable, and $X$ is some measure space; $X=\R\setminus\{0\}$ in our case. We denote by $\Pi$ the set of all point functions. To every interval $I\subset(0,\infty)$ and measurable set $U\subset X$ we assign the counting function
\begin{equation*}
N_p(I,U) := \abs{\{t\in D_p\cap I\mid p(t)\in U\}}.
\end{equation*} 
We endow $\Pi$ with the minimal $\sigma$-field $\mathcal{B}$ generated by all the functions $p\mapsto N_p(I,U)$.

A (stationary) Poisson point process is a random variable $\vc{p}$ from some probability space $(\Omega,\mathcal{F},P)$ into the space of point functions $(\Pi,\mathcal{B})$, which satisfies, among other properties, 
\begin{equation*}
\mathbf{E}[N_p(I,U)] := \int_\Omega N_{\mathbf p(\omega)}(I,U)\,dP(\omega) = \abs{I}n(U),
\end{equation*}
where $n$ the characteristic measure of the process. We refer the reader to Ch. I.9 of \cite{zbMATH00043057} for details. The point function $p_\delta$ attached to $H_\delta$ represents the location and size of the jumps:
\begin{equation*}
p_\delta: \mathbb{Q}\cap[0,1)\to X = \R\setminus\{0\}.
\end{equation*} 
The following theorem shows that, in a weak sense, $\mathbf{E}[N_p(I,U)]\approx  N_{p_\delta}(I,U)$ for some measure $n$ on $\R\setminus\{0\}$, \textit{i.e.} $p_\delta$ resembles an outcome of some Poisson point process $\vc p$.

\begin{theorem}\label{thm:N_p_delta}
For $I\subset[0,1)$, the function
\begin{equation}\label{eq:thm:def_abs_N}
\abs{N}_{p_\delta}(I,r) := N_{p_\delta}(I,(-\infty,-r]\cup[r,\infty)),\quad \textrm{for }r>0,
\end{equation}
satisfies the bounds
\begin{align}
\abs{N}_{p_\delta}(I,r)&\le C_\delta \abs{I}r^{-1/(1+\delta)} + 1,  & \textrm{all } & r\lesssim_\delta 1, \label{eq:thm:N_p_delta_Upper}\\
\abs{N}_{p_\delta}(I,r) &\gtrsim_\delta \frac{\abs{I}}{\log(c_\delta/r)}r^{-1/(1+\delta)}, & \textrm{all } & r\lesssim_\delta \abs{I}^{2(1+\delta)}.
\end{align}
\end{theorem}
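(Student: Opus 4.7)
The plan is to reduce both bounds to counting reduced fractions $p/q\in I$ whose denominators lie in an appropriate range. By Theorem~\ref{thm:limit_FD}, $H_\delta$ is a right-continuous step function whose jump at each rational $p/q\in[0,1)$ with $(p,q)=1$ equals $a_{\delta,q}/q^{2(1+\delta)}$. Since $a_{\delta,q}$ depends only on $q\bmod 4$ and is nonzero in each residue class, one has $c_\delta\le|a_{\delta,q}|\le C_\delta$ uniformly in $q$. Thus $p_\delta$ is supported on the rationals, and writing $Q\asymp_\delta r^{-1/(2(1+\delta))}$, the condition $|p_\delta(p/q)|\ge r$ is equivalent, up to $\delta$-dependent constants on the threshold, to $q\le Q$.

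\emph{Upper bound.} Set $N_q(I):=\#\{0\le p<q:p/q\in I\}\le|I|q+1$. Then
\[
|N|_{p_\delta}(I,r)\le\sum_{q=1}^{Q}N_q(I)\le\sum_{q=1}^{Q}(|I|q+1)\le C(|I|Q^2+Q).
\]
If $|I|Q\ge 1$, this is $\lesssim|I|Q^2\asymp|I|r^{-1/(1+\delta)}$. If $|I|Q<1$, then $|I|<1/Q^2$, and any two distinct reduced fractions $p_1/q_1,p_2/q_2$ with $q_i\le Q$ satisfy $|p_1/q_1-p_2/q_2|\ge 1/(q_1q_2)\ge 1/Q^2>|I|$, so at most one such fraction lies in $I$, giving $|N|_{p_\delta}(I,r)\le 1$. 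The two regimes combine to give \eqref{eq:thm:N_p_delta_Upper}.

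\emph{Lower bound.} I restrict to $q\in(Q/2,Q]$, where the jumps still have size $\gtrsim_\delta r$. By Möbius inversion, the number $N_q^{\mathrm{red}}(I)$ of reduced $p/q\in I$ with denominator $q$ equals $|I|\varphi(q)+\BigO(\tau(q))$, where $\tau(q)$ is the divisor function. Summing this identity and using the classical estimates $\sum_{q\le Q}\varphi(q)=3Q^2/\pi^2+\BigO(Q\log Q)$ and $\sum_{q\le Q}\tau(q)=\BigO(Q\log Q)$ yields
\[
|N|_{p_\delta}(I,r)\ge c_\delta|I|Q^2-C_\delta Q\log Q.
\]
When $|I|Q\ge c\log Q$ this already gives the stronger bound $\gtrsim|I|Q^2$. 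In the complementary regime $1\lesssim|I|Q<c\log Q$, which is precisely where the logarithmic loss matters, I would exploit a dyadic refinement of the Farey counting (equivalently, the discrepancy estimate for the Farey sequence of order $Q$ in intervals of length $\gtrsim 1/Q$) to extract $\gtrsim|I|Q^2/\log Q\asymp|I|r^{-1/(1+\delta)}/\log(c_\delta/r)$ reduced fractions with $q\in(Q/2,Q]$ lying in $I$. Since the assumption $r\lesssim_\delta|I|^{2(1+\delta)}$ says exactly $|I|Q\gtrsim 1$, the bound holds throughout the stated range.

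The main obstacle lies in the borderline regime $|I|Q\sim\log Q$: the divisor-sum error $\BigO(Q\log Q)$ is then comparable to the main term $|I|Q^2$, and naive counting loses. The logarithmic denominator in the statement is tuned precisely to absorb this error. The right way to handle it is either a sharper equidistribution result for Farey fractions in short intervals, or an auxiliary dyadic decomposition in $q$ combined with the Farey gap bound $1/(q_1q_2)$; this is where I would concentrate the effort.
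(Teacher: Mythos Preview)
Your reduction to counting reduced fractions with denominator $\le Q\asymp_\delta r^{-1/(2(1+\delta))}$ is exactly the paper's reduction, but both halves of your argument then diverge from the paper's, and each has an issue.

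\textbf{Upper bound.} The implication ``$|I|Q<1\Rightarrow|I|<1/Q^2$'' is false, so your case split leaves the range $1/Q^2\le |I|<1/Q$ uncovered: there the crude sum gives $|I|Q^2+Q\asymp Q$, while the target $C|I|Q^2+1$ may be $\BigO(1)$. The paper avoids any case split by telescoping the Farey gaps directly: arranging the reduced fractions in $I$ with denominator $\le Q$ in increasing order, consecutive gaps equal $1/(q_iq_{i+1})\ge 1/Q^2$, so $|I|\ge (M-1)/Q^2$, i.e.\ $M\le |I|Q^2+1$ in one line. You already have the Farey gap tool; just apply it globally rather than only in the short-interval case.

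\textbf{Lower bound.} Your M\"obius/totient approach produces a main term $c|I|Q^2$ and an error $\BigO(Q\log Q)$, and you correctly identify that these compete when $|I|Q\sim\log Q$; you then defer to an unspecified discrepancy argument. The paper sidesteps this completely by counting \emph{only prime denominators}: for a prime $q$ with $q|I|>1$, every $p/q\in I$ with $0<p<q$ is automatically reduced, and there are $\ge q|I|/2$ of them. Restricting to primes $q\in(N/2,N]$ (valid once $N>2/|I|$, i.e.\ exactly the hypothesis $r\lesssim_\delta|I|^{2(1+\delta)}$) and invoking the prime number theorem gives $\gtrsim |I|\cdot N\cdot N/\log N$. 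The logarithm now arises from the \emph{density of primes}, not from an error term, so there is no borderline regime and no need for sharper equidistribution. This is both simpler and more robust than the route you sketch.
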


The theorem is consequence of the following lemma.

\begin{lemma} \label{thm:counting_pq}
For $I\subset[0,1)$, the function
\begin{equation*}
M(I,N) := \abs{\{p/q\in I\subset\R \mid q\le N\textrm{ and } (p,q)=1\}},\quad \textrm{for }N\ge 1,
\end{equation*}
satisfies the bounds
\begin{align}
M(I,N)&\le \abs{I}N^2+1, &\textrm{all } & N\ge 1,\label{eq:thm:M_UpperBound}\\
M(I,N)&\gtrsim \abs{I}\frac{N^2}{\log N}, &\textrm{all } & N>2/\abs{I}. \label{eq:thm:M_LowerBound}
\end{align}
\end{lemma}
\begin{proof}
We arrange the rationals inside $I$ in increasing order $p_1/q_1<\cdots<p_M/q_M$ and then use the fact that $p_{i+1}/q_{i+1}-p_i/q_i = 1/(q_{i+1}q_i)$, see Theorem~28 in \cite{zbMATH05309455}, to get
\begin{equation*}
\abs{I}\ge \sum_{i=1}^{M-1}\frac{1}{q_{i+1}q_i}> \frac{M-1}{N^2},
\end{equation*}
which is \eqref{eq:thm:M_UpperBound}.

For the lower bound, we only count fractions $p/q$ with prime denominator. Given a prime $q\le N$ such that $q\abs{I}> 1$, the number of fractions $p/q\in I$ is $\ge q\abs{I}/2$, so for $N> 2/\abs{I}$ we have
\begin{equation*}
M(I,N)\ge \frac{1}{2}\abs{I}\sum_{\abs{I}^{-1}< q\le N} q \ge \frac{1}{4}\abs{I}N\,\abs{\{N/2\le q\le N\mid q \textrm{ prime}\}}.
\end{equation*}
Using the prime number theorem and the Bertrand's postulate we arrive at \eqref{eq:thm:M_LowerBound}.
\end{proof}

We expect the bounds in the lemma can be improved, in particular, the $\log N$-loss in \eqref{eq:thm:M_LowerBound} should be removable. It is interesting to investigate the behavior of $M(I,N)$ when $N\le 2/\abs{I}$. For example, in the interval $I = (0,1/N)$ there is no rational $p/q$ with $q\le N$, so $M(I,N)$ can be zero when $q\le 1/\abs{I}$, but $(0,1/N)$ is a very special interval, can we do any better for other type of intervals?

\begin{proof}[Proof of Theorem~\ref{thm:N_p_delta}]
According to \eqref{eq:thm:h_Talbot} the value of the point function $p_\delta$ at a rational time $t=p/q$ is $p_\delta(t) = a_{\delta,q}/q^{2(1+\delta)}$, where $\abs{a_{\delta,q}}\sim_\delta 1$, so
\begin{equation*}
M(I,c_\delta r^{-\frac{1}{2(1+\delta)}})\le \abs{N}_{p_\delta}(I,r) \le M(I, C_\delta r^{-\frac{1}{2(1+\delta)}}),
\end{equation*}
and the bounds in the theorem follow from Lemma~\ref{thm:counting_pq}.
\end{proof}

Theorem~\ref{thm:N_p_delta} suggests that $N_{p_\delta}(I,U)\approx \abs{I}n(U)$ with characteristic measure $dn(r) \approx r^{-1-1/(1+\delta)}\,dr$. We can write $H_\delta$ in \eqref{eq:primitive_H} in terms of $N_{p_\delta}$ as
\begin{equation*}
H_\delta(t) = \int_{[0,t]}\int_{\R\setminus \{0\}}y\,N_{p_\delta}(dsdy).
\end{equation*}
We recognize here a ``realization'' of an (asymmetric) $\alpha$-Lévy process with exponent $\alpha := 1/(1+\delta)$. We ignore the compensator term because it would add a linear term in $t$, and we can always think of $H_\delta$ as a Lévy process with drift; see Ch. II.3-4 of \cite{zbMATH00043057}. 

This connection between $H_\delta$ and Lévy processes also suggests that $H_\delta$ behaves intermittently, with bursts at rational times with small denominator. It is worth mentioning that $\alpha$-Lévy processes, with $1<\alpha<2$,\footnote{The larger the exponent, the lower the probability of very large jumps.} have already been studied and described as strongly intermittent; see Sec. 3.3 in \cite{zbMATH06420690}.

Yet another evidence of intermittency lies in the variability of the Hölder exponent of $H_\delta$ or multifractality, which is the content of the next theorem, but first we introduce a few definitions and a lemma.
 
\begin{definition}[Hölder exponent]\label{def:Holder}
Let $t_0\in\R$. A function $f$ is in $C^l(t_0)$, for $l\in\R_+$, if there is a polynomial $P_{t_0}$ of degree at most $\lfloor l\rfloor$ such that in a neighborhood of $t_0$
\begin{equation*}
\abs{f(t)-P_{t_0}(t)}\lesssim \abs{t-t_0}^l.
\end{equation*}
The Hölder exponent of $f$ at $t_0$ is
\begin{equation}
\gamma_f(t_0) := \sup\{l\mid f\in C^l(t_0)\}.
\end{equation}
\end{definition} 
 
\begin{definition}[Irrationality measure]\label{def:Irrationality}
Fix $t\in\R$ and let $A\subset\R_+$ be the set of exponents $m\in \R_+$ such that 
\begin{equation*}
0<\Big\lvert t-\frac{p}{q}\Big\rvert <\frac{1}{q^m},
\end{equation*}
has infinitely many solutions. The irrationality measure $\mu(t)$ of $t\in\R$ is
\begin{equation}
\mu(t) := \sup A.
\end{equation}
\end{definition}

If $t$ is rational, then $\mu(t) = 1$; if $t$ is irrational, then by the Dirichlet's approximation theorem $\mu(t)\ge 2$; if $t$ is an irrational algebraic number, then $\mu(t) = 2$ by Roth's theorem; and $t$ is a Liouville number if and only if $\mu(t) = \infty$.

\begin{lemma}\label{thm:Smallest_denominator}
Let $t$ be an irrational number with finite $\mu(t)$ and let $\varepsilon>0$. If $P/Q$ is the fraction with the smallest denominator among all fractions $\abs{t-p/q}<h$, for $h\ll_\varepsilon 1$, then $h^{-1/(\mu+\varepsilon)}<Q$. 

For $t = p_0/q_0$, if $h>0$ and $0<\abs{t-p/q}\le h$, then $q\ge 1/(q_0h)$.
\end{lemma}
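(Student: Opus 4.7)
\medskip
\noindent\textbf{Proof plan.} The argument splits naturally into the rational and irrational cases, and the rational case is by far the easier one. If $t=p_0/q_0$ and $0<|t-p/q|\le h$, the triangle inequality gives $|t-p/q| = |p_0q-pq_0|/(q_0q)$, whose numerator is a nonzero integer. Hence $|t-p/q|\ge 1/(q_0q)$, which combined with $\le h$ yields the claimed $q\ge 1/(q_0h)$. No Diophantine machinery is needed.

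\medskip
\noindent For irrational $t$ with finite $\mu:=\mu(t)$ and fixed $\varepsilon>0$, the first step is to translate Definition~\ref{def:Irrationality} into a concrete uniform lower bound: because only finitely many pairs $(p,q)$ satisfy $|t-p/q|<1/q^{\mu+\varepsilon}$, there is $q_0=q_0(\varepsilon)$ such that
\begin{equation*}
|t-p/q|\ge 1/q^{\mu+\varepsilon}\qquad\text{for every integer } q\ge q_0 \text{ and every } p\in\Z.
\end{equation*}
For the lower bound on $Q$: by Dirichlet's theorem alone, $Q\lesssim 1/h$, so for $h\ll_\varepsilon 1$ we automatically have $Q\ge q_0$, and the preceding inequality gives $h>|t-P/Q|\ge 1/Q^{\mu+\varepsilon}$, i.e. $Q>h^{-1/(\mu+\varepsilon)}$.

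\medskip
\noindent For the upper bound I would invoke the continued fraction expansion of $t$, with convergents $p_n/q_n$. The standard estimate $|t-p_n/q_n|<1/(q_nq_{n+1})$, combined with the lower bound above applied at level $n$, gives $q_{n+1}<q_n^{\mu+\varepsilon-1}$ whenever $q_n\ge q_0$. Let $n$ be the first index with $|t-p_n/q_n|<h$; then $|t-p_{n-1}/q_{n-1}|\ge h$, and using $|t-p_{n-1}/q_{n-1}|<1/(q_{n-1}q_n)$ yields $q_{n-1}q_n<1/h$. For $h$ small enough the lower bound forces $q_n$ (and hence $q_{n-1}$) to exceed $q_0$, so the previous recursion applies at level $n-1$: $q_{n-1}>q_n^{1/(\mu+\varepsilon-1)}$. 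Substituting into $q_{n-1}q_n<1/h$ gives $q_n^{(\mu+\varepsilon)/(\mu+\varepsilon-1)}<1/h$, i.e.
\begin{equation*}
q_n<h^{-(\mu+\varepsilon-1)/(\mu+\varepsilon)}=h^{-1+1/(\mu+\varepsilon)}.
\end{equation*}
Since $p_n/q_n$ is a candidate for the minimization defining $Q$, this gives $Q\le q_n$ and the desired upper bound.

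\medskip
\noindent The only delicate point is ensuring that the irrationality-measure inequality really applies at the ``previous'' convergent $q_{n-1}$: this is why the hypothesis is $h\ll_\varepsilon 1$ rather than $h<1$. Concretely, the lower bound already proved forces $q_n>h^{-1/(\mu+\varepsilon)}$, so if $h$ is smaller than a threshold depending on $q_0(\varepsilon)$ and on the growth of the first few convergents of $t$, then necessarily $q_{n-1}\ge q_0$ as well. This is the main (and essentially the only) technical subtlety in the proof.
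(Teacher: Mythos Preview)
Your argument is correct in outline, but there is one expository slip and one genuine difference in route compared with the paper.

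\medskip
\noindent\textbf{The slip.} In the lower-bound step you write ``by Dirichlet's theorem alone, $Q\lesssim 1/h$, so for $h\ll_\varepsilon 1$ we automatically have $Q\ge q_0$''. The implication does not follow: an upper bound on $Q$ cannot yield a lower bound. What you actually need (and what the paper does) is the observation that there are only finitely many rationals with denominator $<q_0$ in any bounded neighbourhood of $t$, and since $t$ is irrational each of them sits at a fixed positive distance from $t$; taking $h$ smaller than the minimum of those distances forces $Q\ge q_0$. With this one-line fix your lower-bound argument matches the paper's.

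\medskip
\noindent\textbf{The upper bound: a different route.} Here you work through the continued-fraction convergents, using the recursion $q_{n+1}<q_n^{\mu+\varepsilon-1}$ and the inequality $q_{n-1}q_n<1/h$ to squeeze out $q_n<h^{-1+1/(\mu+\varepsilon)}$. This is correct. The paper instead applies Dirichlet's theorem directly with $N=h^{-1+1/(\mu+\varepsilon)}$: Dirichlet produces $p/q$ with $q\le N$ and $|t-p/q|\le 1/(qN)$, and then the \emph{lower} bound already established (more precisely, the same finitely-many-exceptions argument) forces this Dirichlet $q$ to exceed $h^{-1/(\mu+\varepsilon)}$, whence $1/(qN)<h$ and $Q\le q\le N$. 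The paper's argument is two lines once the lower bound is in hand; yours is longer but makes the source of the exponent $1-1/(\mu+\varepsilon)$ more explicit via the growth rate of consecutive convergents. Both give exactly the same conclusion.
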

\begin{proof}
We only consider the case when $t$ is irrational. Suppose on the contrary that there is $p/q$ with $q\le h^{-1/(\mu+\varepsilon)}$ such that $0<\abs{t-p/q}<h$, then $0<\abs{t-p/q}<1/q^{\mu+\varepsilon}$, but this can only happen for finitely many fractions, so taking $h\ll_\varepsilon 1$ we can avoid those fractions and necessarily $h^{-1/(\mu+\varepsilon)}<Q$.
\end{proof}

\begin{customthm}{\ref{thm:intro:Holder_H}}
{\it
Let $H_\delta$ be the function in \eqref{eq:intro:primitive_H} 
and set $\alpha:= 1/(1+\delta)$.
Then,
\begin{equation} \label{eq:thm:SpecSing2}
d_{H_\delta}(\gamma) = 
\begin{cases}
\alpha\gamma, & \textrm{if }\gamma\in [0,1/\alpha], \\
-\infty, & \textrm{if }\gamma > 1/\alpha.
\end{cases}
\end{equation}
If $t$ is rational, then $\abs{H_\delta(t+h)-H_\delta(t)}\le C_\delta(t)h^{1+2\delta}$ for all $h>0$.
}
\end{customthm}

\begin{proof}
Let $t$ be irrational. 
We first prove that for every $\varepsilon>0$
\begin{equation}\label{eq:Holder_normal}
\abs{H_\delta(t+h)-H_\delta(t)} \le C\abs{h}^{2(1+\delta)/(\mu+\varepsilon)}, \qquad \textrm{for } h\ll_{t,\varepsilon} 1,
\end{equation}
so $\gamma_{H_\delta}(t) \ge 2(1+\delta)/\mu$ if $2\le \mu(t) \le \infty$; 
recall Def.~\ref{def:Holder}.

We assume that $h>0$, the other case being similar. 
We integrate by parts to write the difference as
\begin{align}
H_\delta(t+h)-H_\delta(t) &= \int_{\R\setminus \{0\}} y N_{p_\delta}(I, dy) \notag\\
&= \int_0^\infty [N_{p_\delta}(I, [y,\infty)) - N_{p_\delta}(I, [-y,-\infty))]\,dy.\label{eq:diff_H_raw}
\end{align}
Among all $p/q\in I = (t,t+h]$, let $P/Q$ be the rational 
with the smallest denominator, so
\begin{align*}
H_\delta(t+h)-H_\delta(t) &= \frac{a_{\delta,Q}}{Q^{2(1+\delta)}} + \\
&\hspace*{-8mm}+ \int_0^{\abs{a_{\delta,Q}}/Q^{2(1+\delta)}}[N_{p_\delta}(I, [y,\infty)) - N_{p_\delta}(I, [-y,-\infty))-\frac{a_{\delta,Q}}{\abs{a_{\delta,Q}}}]\,dy \\
&= \frac{a_{\delta,Q}}{Q^{2(1+\delta)}} + J_1.
\end{align*}
To control the integral $J_1$ 
we recall the definition of $\abs{N}_{p_\delta}(I,r)$ in 
\eqref{eq:thm:def_abs_N} and write
\begin{equation*}
\abs{J_1}\le \int_0^{\abs{a_{\delta,Q}}/Q^{2(1+\delta)}}\abs{N}_{p_\delta}(I,r)-1\,dr = \int_0^{\abs{a_{\delta,Q_*}}/Q_*^{2(1+\delta)}}\abs{N}_{p_\delta}(I,r)-1\,dr,
\end{equation*}
where $Q_*>Q$ is the next to the smallest denominator in $I := (t,t+h]$. 
Since $P/Q$ and $P_*/Q_*$ have to be successive in a Farey sequence, 
then $1/Q_*^2<1/(QQ_*)<h$, so, using \eqref{eq:thm:N_p_delta_Upper}, 
we have that $\abs{J_1} \le C_\delta h Q_*^{-2\delta} \le C_\delta h^{1+\delta}$.
Hence,
\begin{equation}\label{eq:diff_H}
H_\delta(t+h)-H_\delta(t) = a_{\delta,Q}/Q^{2(1+\delta)} + \BigO(h^{1+\delta})
\end{equation}
and from Lemma~\ref{thm:Smallest_denominator} we get \eqref{eq:Holder_normal}, so
$\gamma_{H_\delta}(t) \ge 2(1+\delta)/\mu$.

To see that the exponent in \eqref{eq:Holder_normal} is the best possible 
when $2< \mu(t) \le \infty$, let $\{q_i\}_i$ be an infinite list of numbers such that 
$\abs{t-p_i/q_i}<1/q_i^{\mu-\varepsilon}$ for some $\varepsilon>0$. 
If we take $h_i = 1/q_i^{\mu-\varepsilon}$, then 
the smallest denominator in $(t,t+h_i]$ is $Q \le q_i$.
By \eqref{eq:diff_H},  
$\abs{H_\delta(t+h)-H_\delta(t)} \gtrsim h_i^{2(1+\delta)/(\mu-\varepsilon)} $ if $h_i\ll 1$.

Now we show by contradiction that 
for every $\gamma > 1 + \delta$ the set of numbers $t$ 
for which $H_\delta \in C^\gamma(t)$ is empty.
Suppose there is a polynomial 
$P_{t}(h) = H_\delta(t) + hR_{t}(h)$
of degree less than $\floor{\gamma}$ such that
\begin{equation} \label{eq:HolderLarge}
\abs{H_\delta(t + h) - H_\delta(t) - hR_t(h)} \le Ch^\gamma,
\qquad \text{for all } \abs{h} \ll 1.
\end{equation}
By the Dirichlet's approximation theorem
we can choose a sequence $\{q_i\}$ such that
$\abs{t - p_i/q_i} < 1/q_i^2$.
Set $h_i = p_i/q_i - t$ so that $h_i < 1/q_i^2$.
Let us write $H_\delta(s \pm) := \lim_{\varepsilon \to 0^+} H_\delta(s \pm \varepsilon)$,
and notice that
\begin{equation} \label{eq:JumpHdelta}
H_\delta(t + h_i+) - H_\delta(t) = 
	H_\delta(t + h_i-) - H_\delta(t) + \frac{a_{\delta, q_i}}{q_i^{2(1 + \delta)}}.
\end{equation}
From \eqref{eq:HolderLarge} and \eqref{eq:JumpHdelta} we see that
\begin{equation}
\abs{H_\delta(t + h_i+) - H_\delta(t) - h_iR_t(h_i)} \le Ch_i^\gamma,
\end{equation}
and that
\begin{equation}
\abs[\bigg]{H_\delta(t + h_i-) - H_\delta(t) + \frac{a_\delta}{q_i^{2(1 + \delta)}} - h_iR_t(h_i)} \ge 
	\frac{\abs{a_{\delta, q_i}}}{q_i^{2(1 + \delta)}} - 
	Ch_i^\gamma.
\end{equation}
Since $h_i^{1 + \delta} < 1/q_i^{2(1 + \delta)}$ and
$\abs{a_{\delta, q}}\simeq_\delta 1$,
then we conclude that
\begin{equation}
h_i^{1 + \delta} < \frac{1}{q_i^{2(1 + \delta)}} \le C_\delta h_i^\gamma, 
\qquad \text{where } h_i \to 0,
\end{equation}
which is a contradiction when $\gamma > 1 + \delta$,
so $d_{H_\delta}(\gamma) = -\infty$ in this case.

Up to now, we know that for $2 \le \mu(t) \le \infty$ there is a sequence $h_i\to 0$ 
such that $\abs{h_i}^{2(1+\delta)/(\mu-\varepsilon)}\lesssim \abs{H_\delta(t+h_i)-H_\delta(t)}\lesssim \abs{h_i}^{2(1+\delta)/(\mu+\varepsilon)}$, so 
necessarily $\gamma_{H_\delta}(t) = 2(1+\delta)/\mu$ as long as $2(1+\delta)/\mu \neq 1$. 
To prove the theorem we still need to settle the case $2(1+\delta)/\mu = 1$.

From \eqref{eq:diff_H} we see that $H_\delta(t+h)-H_\delta(t) -Ah = a_{\delta,Q}/Q^{2(1+\delta)} -Ah + \BigO(h^{1+\delta})$, 
where $A$ is any constant. 
Again, $\abs{H_\delta(t+h)-H_\delta(t) -Ah} \lesssim \abs{h}^{2(1+\delta)/(\mu+\varepsilon)}$, 
but now, to see that this is best possible, 
we use Dirichlet's theorem to find a sequence $\{q_i\}$ such that $\abs{t-p_i/q_i}<1/q_i^2$. 
We choose $h_i = 1/q_i^2$ and 
notice that $Q = q_i$ is the smallest denominator among all fractions in $(t,t+h_i]$.
Indeed, if $Q < q_i$, 
then we can consider the Farey sequence up to the denominator $q_i$.
Now let $p/q \in (t,t+h_i]$ be contiguous to $p_i/q_i$
and satisfy $q < q_i$ so that
$\abs{p_i/q_i - p/q} = 1/(qq_i) < h_i = 1/q_i^2$,
which implies $q > q_i$, a contradiction.
Thus, the linear term dominates and 
$\abs{H_\delta(t+h_i)-H_\delta(t) -Ah_i} \gtrsim \abs{h_i}$,
which implies $\gamma_{H_\delta}(t) = 1$.

When $t$ is rational $\gamma_{H_\delta}(t) = 0$, but we can still measure 
the Hölder exponent from the right using 
\eqref{eq:diff_H_raw}, \eqref{eq:thm:N_p_delta_Upper} and Lemma~\ref{thm:Smallest_denominator}.

To conclude the theorem we use a result of Güting \cite{zbMATH03174571}, 
which asserts that the Hausdorff dimension of the set of numbers with irrationality $\mu$ is $2/\mu$. 
A short proof can be given using Jarník's theorem \cite[Satz~1]{Jarnik1931}:
\begin{gather}
\mathcal{H}^{2/\mu}(W_\mu) = \infty, \\
W_\mu = \Big\{t\mid \Big|t-\frac{p}{q}\Big|<\frac{1}{q^\mu}\textrm{ for infinitely many fractions } p/q\Big\};
\end{gather}
see also \cite{zbMATH01679547}. 
The set of numbers where $H_\delta$ has Hölder exponent $\gamma \le 1+\delta := 1/\alpha$ 
coincides with the set of numbers with irrationality $2(1+\delta)/\gamma$, and 
the dimension of the latter is $\gamma/(1+\delta)$, 
which is \eqref{eq:thm:SpecSing2}.
\end{proof}

\bibliographystyle{plain}
\bibliography{Sigma}

\end{document}